\newcommand{\FF}{\mathbb{F}}
\newcommand{\GAP}{\mathrm{GAP}}
\newcommand{\EE}{\mathrm{E}}
\newcommand{\PP}{\mathrm{P}}
\newcommand{\Anq}{\mathcal{A}_{n,q}}
\newcommand{\Mnq}{\mathcal{M}_{n,q}}
\newcommand{\indic}{\mathds{1}}
\newcommand{\Tr}{\operatorname{tr}}
\author{Ofir Gorodetsky}
\address{Raymond and Beverly Sackler School of Mathematical Sciences, Tel Aviv University, Tel Aviv 69978, Israel}
\email{ofir.goro@gmail.com}
\author{Will Sawin}
\address{Department of Mathematics, Columbia University, New York, NY 10027, USA}
\email{sawin@math.columbia.edu}
\newtheorem{theorem}{Theorem}
\newtheorem{lemma}{Lemma}  
\newtheorem{proposition}{Proposition}
\newtheorem{corollary}{Corollary}
\newtheorem{conjecture}{Conjecture} 
\theoremstyle{remark}
\newtheorem{remark}{Remark}
\numberwithin{equation}{section}
\begin{document}
	 \title{Correlation of arithmetic functions over $\FF_q[T]$}

\begin{abstract}
For a fixed polynomial $\Delta$, we study the number of polynomials $f$ of degree $n$ over $\mathbb F_q$ such that $f$ and $f+\Delta$ are both irreducible, an $\mathbb F_q[T]$-analogue of the twin primes problem. In the large-$q$ limit, we obtain a lower-order term for this count if we consider non-monic polynomials, which depends on $\Delta$ in a manner which is consistent with the Hardy-Littlewood Conjecture. We obtain a saving of $q$ if we consider monic polynomials only and $\Delta$ is a scalar. To do this, we use symmetries of the problem to get for free a small amount of averaging in $\Delta$. This allows us to obtain additional saving from equidistribution results for $L$-functions. We do all this in a combinatorial framework that applies to more general arithmetic functions than the indicator function of irreducibles, including the M\"{o}bius function and divisor functions.

\end{abstract}
	\maketitle
\section{Introduction}
Let $q$ be a prime power, $\FF_q$ be the finite field with $q$ elements and $\FF_q[T]$ be the polynomial ring over $\FF_q$ in indeterminate $T$.
\begin{itemize}
\item We denote by $\Anq$ the set of polynomials of degree $n$ in $\FF_q[T]$ and by $\mathcal{A}_q = \cup_{n \ge 0} \Anq$ the set of non-zero polynomials in $\FF_q[T]$.
\item Similarly, we denote by $\Mnq\subseteq \Anq$ the set of monic polynomials of degree $n$ in $\FF_q[T]$ and by $\mathcal{M}_q = \cup_{n \ge 0} \Mnq$ the set of all monic polynomials in $\FF_q[T]$. 
\item We denote by $\mathcal{P}_{n,q}\subseteq \Mnq$ the set of monic irreducible polynomials of degree $n$ in $\FF_q[T]$ and by $\mathcal{P}_q = \cup_{n \ge 1} \mathcal{P}_{n,q}$ the set of all monic irreducible polynomials in $\FF_q[T]$.
\item Given $\Delta \in \mathcal{A}_q$, we denote by $a_{\Delta,q}$ the number of distinct roots of $\Delta$ in $\FF_q$:
\begin{equation*}
a_{\Delta,q} = \# \{ a \in \FF_q : \Delta(a)=0\}.
\end{equation*}
\end{itemize}

There are several important arithmetic functions $\FF_q[T]\setminus \{0\} \to \mathbb{C}$ which are used to study polynomials. Two well-known examples include the von Mangoldt function $\Lambda_q$, defined on monics by
\begin{equation*}
\Lambda_q(f) =\begin{cases} \deg P & \text{if }f = P^k, \text{ where }P \in \mathcal{P}_q,\, k >0, \\ 0 & \text{otherwise,} \end{cases}
\end{equation*}
and the M\"obius function $\mu_q$, defined on monics by
\begin{equation*}
\mu_q(f) = \begin{cases} (-1)^k & \text{if }f= \prod_{i=1}^{k} P_i, \text{ where } P_i\in \mathcal{P}_q \text{ are distinct}, \\ 0 & \text{otherwise.} \end{cases}
\end{equation*}
We extend these functions to non-monics by $\alpha(c\cdot f)=\alpha(f)$ for all $c \in \FF_q^{\times}, f \in \mathcal{M}_q$. The mean value of $\Lambda_q$ has a well-known closed form \cite[Prop.~2.1]{rosen2002}:
\begin{equation*}
\frac{\sum_{f \in \Anq} \Lambda_q(f)}{q^n(q-1)} = \frac{\sum_{f \in \Mnq} \Lambda_q(f)}{q^n} = 1,
\end{equation*}
and this is an analogue of the Prime Number Theorem. For $\Delta \in \FF_q[T] \setminus \{ 0\}$, the asymptotics of the mean values of $\Lambda_q(f) \Lambda_q(f+\Delta)$ over either $f\in \Anq$ or $f\in \Mnq$, that is,
\begin{equation*}
\frac{\sum_{f \in \Anq} \Lambda_q(f) \Lambda_q(f+\Delta)}{q^n(q-1)} \text{ or } \frac{\sum_{f \in \Mnq} \Lambda_q(f) \Lambda_q(f+\Delta)}{q^n},
\end{equation*}
are much less understood. Such asymptotic gives us quantitative information about pairs of primes in $\FF_q[T]$ whose difference is $\Delta$. The Hardy-Littlewood Conjecture predicts precise asymptotics for these mean values when $q^n \to \infty$, see \S\ref{twprdisc}. In this paper, we are interested in improving asymptotic results for the mean value of $\Lambda_q(f) \Lambda_q(f+\Delta)$, and other `shifted correlation' sums, in the limit $q \to \infty$. This limit is also known in the literature as the `large-$q$ limit' or `large finite field limit'. In particular, we think of $n$ as fixed, and $\Delta \in \FF_q[T] \setminus \{0\}$ is always of degree less than $n$. For results on twin primes in $\FF_q[T]$ where $q$ is fixed and $n \to \infty$, see the work of Castillo, Hall, Lemke Oliver, Pollack and Thompson \cite{castillo2015} and the results of the second author and Shusterman \cite{sawin2019} obtained while this article was in press.

Pollack \cite[Thm.~2]{pollack2008}, Bender and Pollack \cite[Thm.~1.3]{bender2009}, Bary-Soroker \cite[Thm.~1.1]{bary2014} and Carmon \cite[\S6]{carmon2015} have shown that 
\begin{equation}\label{twincornonm}
\frac{\sum_{f \in \Mnq} \Lambda_q(f) \Lambda_q(f+\Delta)}{q^n} - 1= O_{n}\Big(\frac{1}{\sqrt{q}}\Big),
\end{equation}
from which it follows by the first part of Lemma~\ref{monicnonmoniclem} below that
\begin{equation}\label{twincorm}
\frac{\sum_{f \in \Anq} \Lambda_q(f) \Lambda_q(f+\Delta)}{q^n(q-1)} -1= O_{n}\Big(\frac{1}{\sqrt{q}}\Big)
\end{equation}
as well. The proofs of \eqref{twincornonm} involve the calculation of the Galois groups of certain generic polynomials, and an application of a function field analogue of Chebotarev Density Theorem. These methods give an implied constant of order $n!^2$ (see the statement of \cite[Thm.~1.3]{bender2009}).

Using methods from $L$-functions, Pollack has shown that \cite[Thm.~1]{pollack2008polynomial}
\begin{equation}\label{pollackres}
\frac{\sum_{f \in \Anq} \Lambda_q(f) \Lambda_q(f+\Delta)}{q^n(q-1)} -1= O\Big(\frac{|\Delta|}{\phi(\Delta)}\frac{n^2}{q} \Big),
\end{equation}
where $\phi$ is Euler's totient function. As $\frac{|\Delta|}{\phi(\Delta)} \le 2^{\deg \Delta} \le 2^{n-1}$, the dependence of the error term in \eqref{pollackres} on $n$ is much better than in \eqref{twincorm} and \eqref{twincornonm}. More importantly for us, the dependence on $q$ is better in \eqref{pollackres} compared to \eqref{twincorm} and \eqref{twincornonm}, and in fact the Hardy-Littlewood Conjecture predicts that the left hand side of \eqref{pollackres} is, in general, $\Theta_n(\frac{1}{q})$, see \S\ref{twprdisc}. Thus, the power of $q$ appearing in Pollack's result is the best possible.

When $\Delta=1$, Keating and Roditty-Gershon \cite{keating2016,keating2018} have improved \eqref{pollackres} in the large-$q$ limit, namely they have shown that
\begin{equation*}
\frac{\sum_{f \in \Anq} \Lambda_q(f) \Lambda_q(f+1)}{q^n(q-1)} -1= -\frac{1}{q} + O_n\Big(\frac{1}{q^{3/2}} \Big),
\end{equation*}
see the case $k=0$ of \cite[Thm.~1.3]{keating2016}\footnote{Their result estimates $\sum_{f \in \Mnq} \sum_{c \in \FF_q^{\times}}\Lambda_q(f) \Lambda_q(f+c)$, which is the same quantity as $\sum_{f \in \Anq} \Lambda_q(f) \Lambda_q(f+1)$ by \eqref{monicnonmonicrel}.}. Finally, we mention the work of Bary-Soroker and Stix, giving precise answers for $n=3$ and $\Delta=1$ \cite{bary2017}. In \S\ref{secprofthmp} we prove the following theorem, which is a corollary of the general results presented in \S\ref{secmainres}.
\begin{theorem}\label{thmp}
Let $n$ be a positive integer. Let $\Delta$ be a squarefree polynomial in $\mathcal{A}_q$ which is either of degree~$\le n-5$ or of degree $n-1$. We have, for $n \ge 5$,
\begin{equation*}
\frac{\sum_{f \in \Anq} \Lambda_q(f) \Lambda_q(f+\Delta)}{q^n(q-1)}  -1= \frac{-1+a_{\Delta,q}}{q} + O_{n}\Big(\frac{1}{q^{3/2}}\Big).
\end{equation*}
If $\Delta \in \FF_q^{\times}$ and $n \ge 4$ then
\begin{equation}\label{eq:lambdashifted}
\frac{\sum_{f \in \Mnq} \Lambda_q(f) \Lambda_q(f+\Delta)}{q^n} -1= O_{n}\Big(\frac{1}{q}\Big).
\end{equation}
\end{theorem}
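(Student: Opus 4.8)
The plan is to deduce Theorem~\ref{thmp} from the general shifted-correlation estimates of \S\ref{secmainres}, applied to $\alpha=\Lambda_q$. The work specific to $\Lambda_q$ is threefold: reduce from $\Lambda_q$ to the indicator of irreducibles; extract the first two coefficients of the resulting expansion in powers of $q$; and, for the monic scalar statement, use a scaling symmetry so that a short multiplicative-character sum carries the $L$-function equidistribution input of \S\ref{secmainres}. \textbf{Step~1 (reduction to twin irreducibles).} Since $\Lambda_q(f)\neq0$ forces $f$ to be a scalar multiple of a prime power $P^{k}$ with $k\ge1$, and $0\le\Lambda_q\le n$ on $\Anq$, the pairs $(f,f+\Delta)$ in which $f$ or $f+\Delta$ is a scalar multiple of a \emph{proper} prime power (i.e.\ $k\ge2$) contribute, after dividing by $q^{n}(q-1)$ or $q^{n}$, only $O_{n}(q^{-n/2})$ to the quantities in question: there are $O_{n}(q^{n/2})$ monic polynomials of degree $n$ that are proper prime powers. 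For $n\ge4$ this is negligible compared with every error term below. So both displayed estimates reduce to counting pairs of irreducibles, up to scalars, monic or not.

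\textbf{Step~2 (the first formula).} The indicator of irreducibles is among the arithmetic functions treated by the framework of \S\ref{secmainres}, which therefore writes $\frac{1}{q^{n}(q-1)}\sum_{f\in\Anq}\Lambda_q(f)\Lambda_q(f+\Delta)$, for squarefree $\Delta$ in the stated degree ranges, as an explicit combinatorial main term plus $O_{n}(q^{-3/2})$. I would identify that main term with the singular series
\begin{equation*}
\mathfrak{S}(\Delta)=\prod_{P\mid\Delta}\frac{|P|}{|P|-1}\ \prod_{P\nmid\Delta}\frac{|P|\,(|P|-2)}{(|P|-1)^{2}}
\end{equation*}
and expand it to order $q^{-1}$. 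Primes of degree $\ge2$ contribute $1+O(q^{-2})$ in total; among the $q$ linear primes $T-a$ exactly $a_{\Delta,q}$ divide $\Delta$ (here squarefreeness of $\Delta$ is used), and for $P=T-a\mid\Delta$ the local factor is $\tfrac{q}{q-1}$ rather than $\tfrac{q(q-2)}{(q-1)^{2}}$, because $\Delta(a)=0$ makes the conditions ``$P\nmid f$'' and ``$P\nmid f+\Delta$'' coincide. Thus $\mathfrak{S}(\Delta)=\bigl(\tfrac{q}{q-1}\bigr)^{a_{\Delta,q}}\bigl(\tfrac{q(q-2)}{(q-1)^{2}}\bigr)^{q-a_{\Delta,q}}+O(q^{-2})=1+\tfrac{a_{\Delta,q}-1}{q}+O(q^{-2})$, which gives the first formula. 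The hypothesis $\deg\Delta\le n-5$ or $\deg\Delta=n-1$ is exactly the range in which the two shifted families are ``independent''/the geometric monodromy is as large as possible, so that \S\ref{secmainres} delivers the error $O_{n}(q^{-3/2})$; in this range the corrections from non-squarefree $f$ and from $\gcd(f,f+\Delta)$ enter only at order $q^{-2}$.

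\textbf{Step~3 (the monic scalar formula).} Fix $c_{0}\in\FF_q^{\times}$ and set $S(c)=\sum_{g\in\Mnq}\Lambda_q(g)\Lambda_q(g+c)$. For $\lambda\in\FF_q^{\times}$ the substitution $g(T)\mapsto\lambda^{-n}g(\lambda T)$ is a $\Lambda_q$-preserving bijection of $\Mnq$, and it carries $g+c$ to $\lambda^{-n}\bigl(g(\lambda T)+\lambda^{n}c\bigr)$, whose $\Lambda_q$-value is $\Lambda_q(g+\lambda^{n}c)$; hence $S(c)=S(\lambda^{n}c)$, so $S$ is constant on cosets of $(\FF_q^{\times})^{n}$, a subgroup of index $d=\gcd(n,q-1)\le n$. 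Expanding the coset indicator through the $d$ characters $\chi$ of $\FF_q^{\times}$ trivial on $(\FF_q^{\times})^{n}$,
\begin{equation*}
S(c_{0})=\frac{1}{q-1}\sum_{\chi}\overline{\chi(c_{0})}\,\widetilde S(\chi),\qquad \widetilde S(\chi)=\sum_{c\in\FF_q^{\times}}\chi(c)\,S(c).
\end{equation*}
For the trivial character, $\widetilde S(\chi)=\sum_{c\in\FF_q^{\times}}S(c)=\sum_{f\in\Anq}\Lambda_q(f)\Lambda_q(f+1)$ — write a degree-$n$ polynomial as a scalar times a monic — which equals $q^{n}(q-1)+O_{n}(q^{n})$ by Pollack's estimate \eqref{pollackres} applied with $\Delta=1$. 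For each of the at most $n-1$ nontrivial $\chi$, the input I take from \S\ref{secmainres} is $\widetilde S(\chi)=O_{n}(q^{n})$. Summing the $d\le n$ terms,
\begin{equation*}
S(c_{0})=\frac{1}{q-1}\bigl(q^{n}(q-1)+O_{n}(q^{n})\bigr)=q^{n}+O_{n}(q^{n-1}),
\end{equation*}
which, divided by $q^{n}$, is \eqref{eq:lambdashifted} (Step~1 being used to pass between $\Lambda_q$ and the indicator of irreducibles where \S\ref{secmainres} requires it).

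\textbf{The main obstacle.} The crux is the bound $\widetilde S(\chi)=O_{n}(q^{n})$ for nontrivial $\chi$. Such $\chi$ annihilates the part of $S(c)$ that is independent of $c$, so this is square-root cancellation \emph{in the shift variable} for the twin-prime error term; the pointwise estimate $S(c)=q^{n}+O_{n}(q^{n-1/2})$ (from \eqref{twincornonm}) yields only $O_{n}(q^{n+1/2})$, a factor $q^{1/2}$ too weak. Gaining that $q^{1/2}$ means realising $c\mapsto S(c)$ as the trace function of a complex of $\ell$-adic sheaves on $\mathbb{G}_{m}$ whose non-constant part has weights $\le 2n-1$, and controlling the cohomology of its twist by $\mathcal{L}_{\chi}$ — essentially the vanishing of the relevant $H^{2}_{c}$ — which is where the equidistribution hypothesis, and the constraint $n\ge4$, enter; this is the substance of \S\ref{secmainres}. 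I expect the analogous subtlety behind Step~2, namely that the error there is genuinely $O_{n}(q^{-3/2})$ (so that the $\tfrac{a_{\Delta,q}-1}{q}$ term is not swamped), to be a purity statement for the middle-degree cohomology of the relevant family rather than a formal consequence of Lang--Weil point counting.
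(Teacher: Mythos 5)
The paper's own proof of Theorem~\ref{thmp} is a one-liner: apply Theorem~\ref{maintermcovthm} and Theorem~\ref{thmmn} with $\alpha_q=\beta_q=\Lambda_q$, then substitute $\hat{\Lambda}_{(n-1,1)}=-1$ from Corollary~\ref{corhighercoeff}. Your proposal shares this high-level shape but departs in each step, and two of the departures leave genuine gaps.

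Step~1 (reducing $\Lambda_q$ to the indicator of irreducibles) is unnecessary: $\Lambda_q$ is itself a factorization function, and Theorems~\ref{maintermcovthm} and~\ref{thmmn} apply to it directly. The estimate $O_n(q^{-n/2})$ for the prime-power contribution is correct but never used.

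Step~2 has a gap. Theorem~\ref{maintermcovthm} expresses $\mathrm{Cov}_{\mathcal{A}_q}(\Lambda_q,\Lambda_q;n,\Delta)$ as $\frac{(a_{\Delta,q}-1)\,|\hat{\Lambda}_{(n-1,1)}|^2}{q}+O_n(q^{-3/2})$, so what you actually need is the value $\hat{\Lambda}_{(n-1,1)}=-1$, which the paper supplies via Rodgers' Fourier-coefficient formula (Proposition~\ref{rodgersexp} and Corollary~\ref{corhighercoeff}). You instead expand the singular series $\mathfrak{S}_{\Delta,q}$ — that computation is correct and matches the paper's own consistency check in \S\ref{twprdisc} — but you never establish that the main term delivered by Theorem~\ref{maintermcovthm} equals $\mathfrak{S}_{\Delta,q}-1+O(q^{-2})$; the identification is asserted, not derived. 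The missing piece is precisely the Fourier coefficient.

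Step~3 is circuitous, and the outsourcing is misattributed. Theorem~\ref{thmmn} with $\alpha_q=\beta_q=\Lambda_q$ already gives $\mathrm{Cov}_{\mathcal{M}_q}(\Lambda_q,\Lambda_q;n,\Delta)=O_n(1/q)$ for scalar $\Delta$, which \emph{is} \eqref{eq:lambdashifted}; nothing further is needed. Your alternative route — exploit $S(c)=S(\lambda^n c)$, expand in multiplicative characters of $\FF_q^\times$, and bound $\widetilde{S}(\chi)=\sum_c\chi(c)S(c)$ — does echo the paper's own hidden symmetry (Proposition~\ref{propsym}, where scaling acts on Hayes characters rather than on the shift), but the bound $\widetilde{S}(\chi)=O_n(q^n)$ for nontrivial $\chi$ is not available from \S\ref{secmainres}. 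If you extract it from Theorem~\ref{thmmn} pointwise in $c$, it follows trivially and Step~3 collapses to a tautology; if instead you want it independently, you are effectively re-proving Theorem~\ref{thmmn}, which requires the Gauss-sum and $L$-function equidistribution apparatus of \S4 and Appendix~\ref{app:secequi} (Theorem~\ref{thmequishortgauss}), not merely the statements in \S\ref{secmainres}. Your own ``main obstacle'' paragraph correctly identifies where the hard content lies, but that content is not present in the cited section and is not supplied in the proposal.
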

The first part of Theorem~\ref{thmp} is the first result showing a dependence of the mean value of $\Lambda_q(f) \Lambda_q(f+\Delta)$ on $\Delta$ (namely, on the linear factors of $\Delta$). In particular, it is the first result that gives us a glimpse into lower-order terms of the Hardy-Littlewood constant, see \S\ref{twprdisc} for more details, where we show that $1+(-1+a_{\Delta,q})/q$ is a first-order approximation for the Hardy-Littlewood constant.

The second part of Theorem~\ref{thmp} is the first result that gives the correct error term for the left hand side of \eqref{eq:lambdashifted}, that is, for the shifted correlation of $\Lambda_q$ over \emph{monic} polynomials.

We now discuss the M\"obius function. The mean value of $\mu_q$ is also well known \cite[Eq.~(5)]{carlitz1932}:
\begin{equation*}
\frac{\sum_{f \in \Anq} \mu_q(f)}{q^n(q-1)} = \frac{\sum_{f \in \Mnq} \mu_q(f)}{q^n} = 0
\end{equation*}
for all $n \ge 2$. This again may be considered as an analogue of the Prime Number Theorem. A conjecture of Chowla \cite{chowla1965}, for which Sarnak has found deep interpretations \cite{sarnak2016}, asserts in particular that
\begin{equation*}
\lim_{x \to \infty}\frac{\sum_{n \le x} \mu(n)\mu(n+h)}{x}=0
\end{equation*}
for all $h \ge 1$. Here $\mu$ is the usual M\"obius function, defined on the positive integers. In the function field setting, Rudnick and Carmon \cite{carmon2014} and Carmon \cite{carmon2015} used an algebro-geometric argument to show that for any $\Delta \in \FF_q[T]\setminus \{0\}$ of degree $<n$, we have 
\begin{equation*}
\frac{\sum_{f \in \Mnq} \mu_q(f) \mu_q(f+\Delta)}{q^n} = O\Big(\frac{n^2}{\sqrt{q}}\Big),
\end{equation*}
from which it follows by the first part of Lemma~\ref{monicnonmoniclem} that
\begin{equation}\label{chowcorm}
\frac{\sum_{f \in \Anq} \mu_q(f) \mu_q(f+\Delta)}{q^n(q-1)} = O\Big(\frac{n^2}{\sqrt{q}}\Big)
\end{equation}
as well. This result is a large-$q$ analogue of Chowla Conjecture. When $\Delta=1$, Keating and Roditty-Gershon have improved \eqref{chowcorm} in the large-$q$ limit, namely they have shown that \cite[Thm.~4.4]{keating2016}
\begin{equation*}
\frac{\sum_{f \in \Anq} \mu_q(f) \mu_q(f+1)}{q^n(q-1)}  =  O_n\Big(\frac{1}{q^{3/2}} \Big).
\end{equation*}
For a recent breakthrough on the function field Chowla Conjecture in the large-$n$ limit with fixed $q$, see the results of the second author and Shusterman \cite{sawin2019}, obtained while this article was in press.

In \S\ref{secprofthmmu} we prove the following theorem, which is again a corollary of the general results presented in \S\ref{secmainres}.
\begin{theorem}\label{thmmu}
Let $n$ be a positive integer. Let $\Delta$ be a squarefree polynomial in $\mathcal{A}_q$ which is either of degree~$\le n-5$ or of degree $n-1$. We have, for $n \ge 5$,
\begin{equation}\label{qsavingmu}
\frac{\sum_{f \in \Anq} \mu_q(f) \mu_q(f+\Delta)}{q^n(q-1)} =  O_{n}\Big(\frac{1}{q^{3/2}}\Big).
\end{equation}
If $\Delta \in \FF_q^{\times}$ and $n \ge 4$ then
\begin{equation}\label{qsavingmonic}
\frac{\sum_{f \in \Mnq} \mu_q(f) \mu_q(f+\Delta)}{q^n} =  O_{n}\Big(\frac{1}{q}\Big).
\end{equation}
\end{theorem}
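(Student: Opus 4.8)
The plan is to reduce both statements to the combinatorial/equidistribution framework advertised in the abstract (and presumably set up in \S\ref{secmainres}), treating $\mu_q$ exactly as one treats $\Lambda_q$ in Theorem~\ref{thmp}; indeed the theorem is stated as a corollary of general results, so the real content is the derivation of the general correlation formula. First I would write $\mu_q$ as a short combinatorial sum over factorization types: using inclusion–exclusion (or the identity $\mu_q = \sum_{d^2 \mid f}\cdots$ suitably encoded), express $\sum_{f\in\Mnq}\mu_q(f)\mu_q(f+\Delta)$ as a main term plus sums indexed by partitions/shapes of $f$ and $f+\Delta$, where the genuinely ``random'' contributions are governed by counting points on the varieties parametrizing pairs with prescribed splitting behavior. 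The mean value $\sum_{f\in\Mnq}\mu_q(f)=0$ for $n\ge 2$ kills the diagonal/main term, so one is left only with the off-diagonal pieces.

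Second, for \eqref{qsavingmu} (the non-monic, variable-$\Delta$ case), I would invoke the passage between monic and non-monic sums via Lemma~\ref{monicnonmoniclem} and the relation \eqref{monicnonmonicrel}: summing over $c\in\FF_q^\times$ in $f\mapsto f+c\Delta$ (equivalently rescaling) produces, for free, a short average over the shift, exactly the ``small amount of averaging in $\Delta$'' mentioned in the abstract. Each individual correlation is $O_n(q^{-1})$ by the analogue of Pollack's bound \eqref{pollackres} together with $|\Delta|/\phi(\Delta)\ll_n 1$ when $\Delta$ is squarefree; the extra averaging, combined with an equidistribution/Deligne-type square-root cancellation input for the relevant family of $L$-functions (this is where the degree restrictions $\deg\Delta\le n-5$ or $\deg\Delta=n-1$ enter, guaranteeing the associated monodromy is large enough — presumably $\mathrm{GAP}$, hence the macro), upgrades $q^{-1}$ to $q^{-3/2}$. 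Concretely: the $O_n(q^{-1})$ term has a leading coefficient that is a sum of Frobenius traces over the averaging set, and equidistribution shows this coefficient is itself $O_n(q^{-1/2})$ after the average, because the relevant monodromy group has no invariants forcing a bias. For $\mu_q$ there is no $a_{\Delta,q}$-type secondary main term (unlike $\Lambda_q$), since the mean value vanishes identically, so the bound is clean.

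Third, for \eqref{qsavingmonic} (the monic case with $\Delta\in\FF_q^\times$ a scalar), I would use the affine symmetry group acting on $\Mnq$: substitutions $T\mapsto \alpha T+\beta$ preserve monic polynomials and act on the scalar shift, so although we fix a single $\Delta$, the orbit structure again furnishes a free average — here an average over $\FF_q$-many translates — which is precisely the mechanism that yields the saving of a full power of $q$ over the trivial $q^{-1/2}$ and gets us to $O_n(q^{-1})$. One writes the monic correlation as main term ($=0$) plus a sum whose size is controlled by point-counts on a variety; the symmetry shows these point-counts come in orbits, and a single application of Deligne's bound per orbit gives the $q^{-1}$ saving without needing the deeper equidistribution step, which is why the hypothesis here is merely $\Delta\in\FF_q^\times$, $n\ge 4$, rather than the squarefree/degree conditions of the first part.

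The main obstacle, as in Theorem~\ref{thmp}, will be verifying that the relevant monodromy groups are as large as possible (big symplectic/orthogonal, or $\mathrm{GAP}$-type, depending on the parity and the sign of the functional equation), so that the equidistribution input actually applies and no unexpected main term survives the averaging; this is a Galois-theoretic computation for the generic polynomial $f(T)$ with $f$ and $f+\Delta$ having prescribed splitting, and the degree restrictions ($\deg\Delta\le n-5$ or $=n-1$) are exactly what is needed to rule out the degenerate cases where the group is too small. The rest — bookkeeping the combinatorial expansion of $\mu_q$, tracking the main-term cancellation, and assembling the error terms — is routine given the general machinery of \S\ref{secmainres}, and the $\mu_q$ case is in fact slightly easier than the $\Lambda_q$ case because the vanishing of the mean value removes the secondary term.
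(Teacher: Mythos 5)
Your high-level diagnosis is correct: the paper's proof of Theorem~\ref{thmmu} is literally one sentence long, citing Theorems~\ref{maintermcovthm} and~\ref{thmmn} with $\alpha_q=\beta_q=\mu_q$ and the computed Fourier coefficients in Corollary~\ref{corhighercoeff}. But several of the mechanisms you sketch for how those general theorems work are wrong or misleading, so let me flag them.

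First, the reason the $a_{\Delta,q}$-dependent secondary term disappears for $\mu_q$ is not that the mean value vanishes. In Theorem~\ref{maintermcovthm} the secondary term is $(a_{\Delta,q}-1)\hat{\alpha}_{(n-1,1)}\overline{\hat{\beta}_{(n-1,1)}}/q$, which depends only on the Fourier coefficient $\hat{\mu}_{(n-1,1)}$. The mean value corresponds to a different coefficient, $\hat{\mu}_{(n)}$, and there is no logical implication between the two. What actually happens is that $\mu|_{S_n^{\#}}$ is proportional to the sign character $\chi_{(1^n)}$ (Proposition~\ref{rodgersexp}), so every other Fourier coefficient, including $\hat{\mu}_{(n-1,1)}$, is zero; this is the content of Corollary~\ref{corhighercoeff}. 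The von Mangoldt function also has $\hat{\Lambda}_{(n)}$ behaving specially, yet $\hat{\Lambda}_{(n-1,1)}=-1\neq 0$, which is precisely why Theorem~\ref{thmp} has a secondary term and Theorem~\ref{thmmu} does not.

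Second, you claim the monic estimate \eqref{qsavingmonic} (i.e.\ Theorem~\ref{thmmn}) only uses the affine symmetry and "a single application of Deligne's bound per orbit," "without needing the deeper equidistribution step." That is incorrect. The proof of Theorem~\ref{thmmn} needs \emph{both} cancellation sources, and they contribute one $\sqrt{q}$ each: the substitution symmetry (Proposition~\ref{propsym} and Lemma~\ref{sumsunderlambda}) converts the inner sum over scalings $c$ into a Gauss/Weil sum $A(\chi,\Delta)$ bounded by $O(n)$, saving $\sqrt{q}$ over the trivial bound; then one still has to average over primitive short-interval characters and invoke the equidistribution result Theorem~\ref{thmequishortgauss} (with the weight $A(\chi,\Delta)$) to save another $\sqrt{q}$. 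Dropping the equidistribution step would only get you $O_n(q^{-1/2})$, not the claimed $O_n(q^{-1})$.

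Third, two smaller points. The paper does not use an inclusion–exclusion expansion $\mu_q=\sum_{d^2\mid f}\cdots$; the relevant combinatorial framework is the Fourier expansion over $S_n^{\#}$ following Rodgers. And $\GAP$ in the paper is the macro for "generalized arithmetic progression" sets $\GAP(n,A;R_{\ell,M})$, not a monodromy group. The monodromy inputs live in Theorems~\ref{thmequigenarith} and~\ref{thmequishortgauss}, which is also where the squarefree-ness of $\Delta$ and the degree restriction $\deg\Delta\leq n-5$ or $\deg\Delta=n-1$ actually enter, as you surmised.
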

For $\Delta \neq 1$, \eqref{qsavingmu} gives an additional saving of $q$ compared to previous results. The estimate \eqref{qsavingmonic} is the first estimate which give a saving of $q$ when the average is over \emph{monic} polynomials.

In Theorems~\ref{thmp} and \ref{thmmu}, the condition that $\Delta$ is squarefree comes only in one part of the proof, where we use Theorem~\ref{thmequigenarith}, an equidistribution result which currently requires squarefree-ness. This should not be a fundamental condition, and the general case is expected to be true, although challenging. The same goes also to the range of the degree of $\Delta$, for which the current results do not allow the values $n-4$, $n-3$ and $n-2$.
\subsection{Arithmetic functions on \texorpdfstring{$\FF_q[T]$}{FqT} and previous work}
An arithmetic function on $\FF_q[T]$ is any function $\alpha \colon \FF_q[T] \setminus \{0\} \to \mathbb{C}$.\footnote{Sometimes we use the domain $\FF_q[T]$ instead of $\FF_q[T] \setminus \{0\}$, but we shall never use the value of $\alpha$ at $0$.} If $f \in \FF_q[T] \setminus \{0\}$ has prime factorization $c \cdot \prod_{i=1}^{k} P_i^{e_i}$ where $c \in \FF_q^{\times}$ and $P_i$ distinct primes in $\mathcal{P}_q$, then its \emph{extended factorization type} is the multiset
\begin{equation*}
\lambda_f = \{ (\deg P_i,e_i) : 1 \le i \le k \}.
\end{equation*}
Let $\mathrm{EFT}$ be the set of all extended factorization types.
Following Rodgers \cite[\S2B]{rodgers2018}, we say that an arithmetic function $\beta\colon \FF_q[T] \setminus \{0\}$ is a \emph{factorization function} if the value $\beta(f)$ is determined by $\lambda_f$, i.e. if there is a function $b \colon \mathrm{EFT}\to \mathbb{C}$ such that $\beta(f)= b(\lambda_f)$ for all $f \in \FF_q[T]\setminus \{0\}$.  The function $b$ is not unique, since for instance the value of $b$ on the multiset of $q+1$ $(1,1)$-s can be chosen arbitrarily (as there are only $q$ distinct linear polynomials).

With any function $\alpha \colon \mathrm{EFT} \to \mathbb{C}$ and any prime power $q$, we may associate a factorization function $\alpha_q \colon \FF_q[T]\setminus \{0\} \to \mathbb{C}$ by letting
\begin{equation}\label{alphaqalpha}
\alpha_q(f) = \alpha(\lambda_f).
\end{equation}
An arithmetic function $\beta \colon \FF_q[T]\to \mathbb{C}\setminus \{0\}$ is said to be \emph{even} if $\beta(c\cdot f)=\beta(f)$ for any $c \in \FF_q^{\times}$ and any $f \in \FF_q[T] \setminus \{0\}$. Any factorization function is even. 

From now on we reserve the notation $\alpha_q$ for a factorization function on $\FF_q[T]$ which comes from $\alpha\colon \mathrm{EFT}\to \mathbb{C}$. Although for any specific $q$, $\alpha_q$ does not determine a unique function $\alpha$ such that \eqref{alphaqalpha} holds, we do have a unique $\alpha$ once we look at an infinite number of $q$-s, that is: if $\alpha ( \lambda_f) = \beta(\lambda_f)$ for all $f\in \FF_q[T] \setminus \{0\}$ for infinitely many $q$-s, we must have $\alpha= \beta$. In particular, a family of functions $\{ \alpha_q : q \mbox{ a prime power}\}$ which come from $\alpha$ determines $\alpha$ uniquely.

For functions $\alpha,\beta \colon X \to \mathbb{C}$ and a non-empty finite subset $S \subseteq X$, we denote the mean value of $\alpha$ over $S$ by
\begin{equation*}
\EE_{S} \alpha= \frac{\sum_{f \in S} \alpha(f)}{|S|}
\end{equation*}
and the covariance of $\alpha$ and $\beta$ over $S$ by
\begin{equation*}
\begin{split}
\mathrm{Cov}_{S} \left( \alpha,\beta \right) =\EE_{S} (\alpha \overline{\beta}) - \EE_{S} \alpha \cdot \EE_{S} \overline{\beta} = \EE_{S} ((\alpha-\EE_{S} \alpha)(\overline{\beta}-\EE_{S} \overline{\beta})).
\end{split}
\end{equation*}
Let $\alpha$, $\beta$ be arithmetic functions on $\FF_q[T]$. Many important questions of number theory are encoded in the following covariances:
\begin{equation*}
\mathrm{Cov}_{\mathcal{M}_q}(\alpha,\beta;n,\Delta)= \mathrm{Cov}_{f\in \Mnq} \left( \alpha(f),\beta(f+\Delta) \right)
\end{equation*}
and
\begin{equation*}
\mathrm{Cov}_{\mathcal{A}_q}(\alpha,\beta;n,\Delta) = \mathrm{Cov}_{f\in \Anq} \left( \alpha(f),\beta(f+\Delta) \right),
\end{equation*}
where $n$ is a positive integer and $\Delta$ is a non-zero polynomial of degree $<n$. Let
\begin{equation*}
\mathrm{max} (\alpha;n) = \mathrm{max}_{f \in \Mnq} \left| \alpha(f) \right|.
\end{equation*}
Andrade, Bary-Soroker and Rudnick \cite[Thm.~1.4]{andrade2015} have shown that
\begin{equation}\label{monicest}
\mathrm{Cov}_{\mathcal{M}_q}(\alpha,\beta;n,\Delta) =O_{n,\mathrm{max} (\alpha;n),\mathrm{max} (\beta;n)} \Big(\frac{1}{\sqrt{q}} \Big)
\end{equation}
for any pair of factorization functions $\alpha,\beta$. From \eqref{monicest} and the first part of Lemma~\ref{monicnonmoniclem} we obtain that 
\begin{equation}\label{monicest2}
\mathrm{Cov}_{\mathcal{A}_q}(\alpha,\beta;n,\Delta) =O_{n,\mathrm{max} (\alpha;n),\mathrm{max} (\beta;n)}\Big(\frac{1}{\sqrt{q}} \Big)
\end{equation}
holds as well. Estimate \eqref{monicest} extends the results of Pollack, Bender and Pollack, Bary-Soroker, Carmon and Rudnick, and Carmon concerning the shifted correlation of $\Lambda_q$ and $\mu_q$. We remark that by applying the methods of Pollack \cite{pollack2008polynomial} carefully for general factorization functions (by borrowing the combinatorial ideas in Rodgers \cite{rodgers2018}), one can in fact obtain
\begin{equation*}
\mathrm{Cov}_{\mathcal{A}_q}(\alpha,\beta;n,\Delta) =O_{n,\mathrm{max} (\alpha;n),\mathrm{max} (\beta;n)}\Big(\frac{1}{q} \Big).
\end{equation*}
\subsection{Main results}\label{secmainres}
Our first theorem is a determination of the main term of $\mathrm{Cov}_{\mathcal{A}_q}(\alpha_q,\beta_q;n,\Delta)$ in the limit $q \to \infty$ for most choices of $\Delta$. To state the theorem, we need the notion of Fourier expansion of factorization functions \cite[\S2B]{rodgers2018}, which we now explain.

Let $\alpha_q \colon \FF_q[T] \to \mathbb{C}$ be a family of factorization functions which come from $\alpha \colon \mathrm{EFT}\to\mathbb{C}$ and let $n$ be a positive integer. Let $S_n^{\#}$ be the set of conjugacy classes of $S_n$, identified as usual with partitions $\lambda=(\lambda_1,\ldots,\lambda_k)$ of $n$ ($\lambda_i$ is always non-increasing). We may embed $S_n^{\#}$ in $\mathrm{EFT}$ by identifying $\lambda$ with the multiset 
\begin{equation*}
\{ (\lambda_i, 1): 1 \le i \le k\}.
\end{equation*}
Under this identification, $\alpha |_{S_n^{\#}}$ is a class function on $S_n$. We may expand this function in the basis of irreducible characters of $S_n$, which are also indexed by partitions and we denote them as usual by $\chi_{\lambda}$:
\begin{equation}\label{deffourier}
\alpha |_{S_n^{\#}} (\pi) = \sum_{\lambda \vdash n} \hat{\alpha}_{\lambda} \chi_{\lambda}(\pi) .
\end{equation}
The coefficients $\hat{\alpha}_{\lambda}$ are called the Fourier coefficients of $\alpha$. 
\begin{theorem}\label{maintermcovthm}
Let $\alpha_q$, $\beta_q$ be factorization functions which come from $\alpha,\beta$. Let $n\ge 5$ be an integer and let $\Delta$ be a squarefree polynomial in $\mathcal{A}_q$ which is either of degree~$\le n-5$ or of degree $n-1$. Then
\begin{equation*}
\mathrm{Cov}_{\mathcal{A}_q}(\alpha_q,\beta_q;n,\Delta) =\frac{(a_{\Delta,q}-1) \hat{\alpha}_{(n-1,1)} \overline{\hat{\beta}_{(n-1,1)}} }{q} +O_{n,\mathrm{max} (\alpha;n),\mathrm{max} (\beta;n)}\Big(\frac{1}{q^{3/2}} \Big).
\end{equation*}
\end{theorem}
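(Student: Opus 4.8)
The plan is to pass to the Fourier side of \eqref{deffourier}, reduce the covariance to a linear combination, with coefficients $\hat\alpha_\lambda\overline{\hat\beta_\mu}$, of covariances of the irreducible characters $\chi_\lambda,\chi_\mu$ of $S_n$ evaluated at the Frobenius classes of $f$ and $f+\Delta$, and then show that only $\lambda=\mu=(n-1,1)$ contributes at order $q^{-1}$. Write $\theta_f\in S_n^{\#}$ for the Frobenius class of a squarefree $f$ (its cycle type being the degree sequence of the irreducible factors of $f$), and extend $\chi_\lambda(\theta_f)$ by $0$ when $f$ is not squarefree; on $U=\{f\in\Anq: f \text{ and } f+\Delta \text{ both squarefree}\}$, \eqref{deffourier} gives $\alpha_q(f)=\sum_{\lambda\vdash n}\hat\alpha_\lambda\chi_\lambda(\theta_f)$ and $\beta_q(f+\Delta)=\sum_{\mu\vdash n}\hat\beta_\mu\chi_\mu(\theta_{f+\Delta})$. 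First I would dispose of $\Anq\setminus U$: it is a union of two discriminant hypersurfaces, on which $\alpha_q,\beta_q$ are bounded by $\mathrm{max}(\alpha;n),\mathrm{max}(\beta;n)$, and — using that a covariance annihilates any term that factors as a mean times a mean, an equidistribution estimate for the factorization type of $f+\Delta$ as $f$ runs over this thin set, and the fact that $\Delta$ squarefree prevents $f$ and $f+\Delta$ from being divisible by the square of a common prime — one checks that it contributes $O_{n,\mathrm{max}(\alpha;n),\mathrm{max}(\beta;n)}(q^{-3/2})$ to $\mathrm{Cov}_{\mathcal A_q}(\alpha_q,\beta_q;n,\Delta)$. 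By bilinearity of the covariance it then suffices to prove, for all $\lambda,\mu\vdash n$,
\[
\mathrm{Cov}_{f\in\Anq}\big(\chi_\lambda(\theta_f),\chi_\mu(\theta_{f+\Delta})\big)=\frac{a_{\Delta,q}-1}{q}\,\indic\big[\lambda=\mu=(n-1,1)\big]+O_n\big(q^{-3/2}\big).
\]

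To evaluate these I would use \'etale cohomology. Choosing an ordering of the $n$ roots presents the squarefree locus of $\Anq$ as the base of an $S_n$-torsor, and pushing forward $\chi_\lambda$ gives a lisse sheaf $\mathcal L_\lambda$, pure of weight $0$, with $\Tr(\mathrm{Frob}_f\mid\mathcal L_\lambda)=\chi_\lambda(\theta_f)$. On the smooth $(n+1)$-dimensional $U$, with $\rho(f)=f+\Delta$, set $\mathcal G_{\lambda,\mu}=\mathcal L_\lambda\otimes\rho^{*}\mathcal L_\mu$; Grothendieck--Lefschetz turns $\sum_{f\in U(\FF_q)}\chi_\lambda(\theta_f)\chi_\mu(\theta_{f+\Delta})$ into $\sum_i(-1)^i\Tr(\mathrm{Frob}\mid H^i_c(U_{\overline{\FF_q}},\mathcal G_{\lambda,\mu}))$, and the two means are treated the same way on each factor. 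For generic $f$ the polynomials $f$ and $f+\Delta$ are coprime (a common root is a root of $\Delta$, a fixed finite set), so the geometric monodromy of the pair of root-permutations is the full $S_n\times S_n$; hence the top group $H^{2(n+1)}_c(U,\mathcal G_{\lambda,\mu})$, being the $S_n\times S_n$-coinvariants of $\chi_\lambda\boxtimes\chi_\mu$, vanishes unless $\lambda=\mu=(n)$, in which case it reconstructs the leading term of the product of the two means. Thus, after subtracting that product, the weight-$(2n+2)$ contribution to the covariance cancels; what remains is, divided by $|\Anq|$, a sum of Frobenius traces on $H^i_c$ with $i\le 2n+1$, which is $O(q^{-1/2})$ a priori and $O(q^{-1})$ once one knows there is no Frobenius eigenvalue of absolute value $q^{n+1/2}$.

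It remains to identify the weight-$2n$ contribution explicitly. The only mechanism producing an order-$q^{-1}$ correlation between the factorizations of $f$ and $f+\Delta$ is a common linear factor: for $a\in\FF_q$, $(T-a)\mid f\iff(T-a)\mid f+\Delta$ precisely when $\Delta(a)=0$, so exactly for the $a_{\Delta,q}$ rational roots of $\Delta$; a common factor of degree $\ge2$ has probability $O(q^{-2})$, and a common square factor is impossible because $\Delta$ is squarefree. Geometrically, for each rational root $a$ of $\Delta$ the Frobenius-stable divisor $D_a=\{f\in U:(T-a)\mid f\}$, on which $f$ and $f+\Delta$ share the root $a$, carries via $f=(T-a)g$ the analogous problem one dimension lower, with pair-monodromy $\cong S_{n-1}\times S_{n-1}$ (a product of two point-stabilizers), so its weight-$2n$ contribution is governed by the branching numbers $\langle\chi_\lambda|_{S_{n-1}},\mathbf{1}\rangle\langle\chi_\mu|_{S_{n-1}},\mathbf{1}\rangle$, which equal $1$ exactly when $\lambda,\mu\in\{(n),(n-1,1)\}$ and $0$ otherwise, while the non-rational roots, permuted by Frobenius, contribute nothing. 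Since $\chi_{(n)}\equiv 1$ drops out of any covariance, the surviving $q^{-1}$ term is confined to $\lambda=\mu=(n-1,1)$. For its exact coefficient — where the rational-root loci contribute $+a_{\Delta,q}$ and a ``diagonal'' term over the non-roots of $\Delta$ subtracts $1$ — I would invoke and cross-check against the elementary identity $\mathrm{Cov}_{\Anq}\big(\Tr(\theta_f\mid V),\Tr(\theta_{f+\Delta}\mid V)\big)=(a_{\Delta,q}-1)/q$ for $V=\mathbf{1}\oplus\chi_{(n-1,1)}$ the permutation representation (using that for distinct $a,b\in\FF_q$ one has $\PP_{f\in\Anq}[(T-a)\mid f,\ (T-b)\mid f+\Delta]=q^{-2}$ exactly, while the diagonal term equals $q^{-1}$ for $a$ a root of $\Delta$ and $0$ otherwise). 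Together with bilinearity of $\mathrm{Cov}$ this forces the per-character covariance to equal $(a_{\Delta,q}-1)/q$ when $\lambda=\mu=(n-1,1)$ and $O_n(q^{-3/2})$ otherwise; substituting into the Fourier expansion and summing over $\lambda,\mu$ gives the theorem.

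The substantive step is the cohomological input behind the last two paragraphs — equivalently, the general equidistribution result for the family $f\mapsto(f,f+\Delta)$ that powers the argument: that $H^{2n+1}_c(U,\mathcal G_{\lambda,\mu})$ has no Frobenius eigenvalue of absolute value $q^{n+1/2}$ (which is what excludes a $q^{-1/2}$ term), and that the weight-$2n$ part of the cohomology is exactly the $\chi_{(n-1,1)}\boxtimes\chi_{(n-1,1)}$ contribution described above, with Frobenius trace $(a_{\Delta,q}-1)q^n$. This is a purity-and-monodromy statement, and it is here that squarefree-ness of $\Delta$ and the degree constraint ($\deg\Delta\le n-5$ or $\deg\Delta=n-1$, with $n\ge 5$) are needed: they keep the degeneration loci (where $f$ or $f+\Delta$ acquires a repeated or a common factor) mild and the relevant monodromy groups as large as possible. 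The elementary point count fixes only the permutation-representation piece and serves as a check; establishing the full cohomological statement is the hard part, the remainder being bilinearity of the covariance and the delicate but routine accounting of $\Anq\setminus U$.
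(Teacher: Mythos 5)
Your proposal takes a genuinely different route from the paper's. The paper uses Proposition~\ref{twoidengap} to rewrite the $\FF_q^\times$-averaged covariance as a telescoping difference of arithmetic-progression covariances, expresses those as sums of $S(n,\alpha,\chi)\overline{S(n,\beta,\chi)}$ over Hayes characters $\chi$ modulo $R_{n-h,\Delta}$, evaluates these via Rodgers' character-sum machinery (Lemmas~\ref{lemboundfact},~\ref{lemschur}, Proposition~\ref{avchiviaschur}) and the equidistribution result Theorem~\ref{thmequigenarith}, and disposes of the $\gcd(f,\Delta)\neq 1$ locus by an elementary reduction (Lemma~\ref{lemcorbiggcd}) to the Andrade--Bary-Soroker--Rudnick theorem in degree $n-1$, where its $O(q^{-1/2})$ error suffices. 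You instead work directly on the parameter space $U\subset\Anq$ of $f$ with $f$ and $f+\Delta$ both squarefree, form the pair sheaf $\mathcal{L}_\lambda\otimes\rho^{*}\mathcal{L}_\mu$, and propose to read off the answer from Grothendieck--Lefschetz. Your branching-rule analysis and your elementary cross-check identity (that $\mathrm{Cov}_{\Anq}(N(f),N(f+\Delta))=(a_{\Delta,q}-1)/q$ exactly, $N$ the number of rational roots) are correct, and they correctly isolate $\chi_{(n-1,1)}\boxtimes\chi_{(n-1,1)}$ as the sole contributor at order $q^{-1}$.

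What your route does not supply, and what the paper's route is designed to avoid needing, is the following. You need (i) that the weight-$(2n+1)$ part of $H^{2n+1}_c(U,\mathcal{G}_{\lambda,\mu})$ vanishes, and (ii) an explicit identification of the weight-$2n$ part; together these would give $O(q^{-3/2})$ for the per-character covariance. Point (i) is a purity statement about the next-to-top compactly supported cohomology of a sheaf on an $(n+1)$-dimensional space, and it is not a minor technical point: the strongest known result for this family, ABR's, uses only the top cohomology group (full $S_n\times S_n$ monodromy, essentially Lang--Weil) and stops at $O(q^{-1/2})$. The paper's entire architecture exists precisely to bypass this difficulty, by converting the problem into equidistribution of $\Theta_\chi$ for Hayes characters --- a family for which the Katz--Sawin technology (here packaged as Theorem~\ref{thmequigenarith} and proved in Appendix~A) is already available --- and by pushing the genuinely geometric ABR input one degree lower, where its $q^{-1/2}$ saving combines with an extra $q^{-1}$ from the shorter sum to give $q^{-3/2}$. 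Also, your treatment of $\Anq\setminus U$ ("one checks... $O(q^{-3/2})$") is not routine: that locus has codimension $1$, so its raw contribution to the mean is of the same order $q^{-1}$ as the main term, and showing the covariance cancels a further factor of $q^{1/2}$ requires an argument; the paper handles this together with the $\gcd$ analysis in Lemma~\ref{lemcorbiggcd}, which is not incidental. So: correct picture, correct target, but the hard step in your route is flagged rather than proven, and it is plausibly harder than what the paper actually proves.
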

For an arithmetic interpretation of $\hat{\alpha}_{(n-1,1)}$, see Lemma~\ref{lem:CoeffInterpret}. From Theorem~\ref{maintermcovthm} and Lemma~\ref{monicnonmoniclem} we immediately have the following corollary.
\begin{corollary}\label{cormonic}
Under the assumptions of Theorem~\ref{maintermcovthm}, and the following additional assumptions:
\begin{enumerate}
\item $\Delta$ is of the form $c(T+a)^{k}$ ($c \in \FF_q^{\times}$, $a\in \FF_q$, $k\ge 0$), and
\item $\gcd(q-1,n-\deg \Delta)=1$, or $\mathrm{char}(\FF_q) > 2$, $\alpha=\overline{\beta}$ and $\gcd(\frac{q-1}{2},n-\deg \Delta)=1$,
\end{enumerate}
we have
\begin{equation*}
\mathrm{Cov}_{\mathcal{M}_q}(\alpha_q,\beta_q;n,\Delta) =\frac{(a_{\Delta,q}-1)\hat{\alpha}_{(n-1,1)} \overline{\hat{\beta}_{(n-1,1)}} }{q} +O_{n,\mathrm{max} (\alpha;n),\mathrm{max} (\beta;n)}\Big(\frac{1}{q^{3/2}} \Big).
\end{equation*}
\end{corollary}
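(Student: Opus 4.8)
The plan is to deduce the corollary from Theorem~\ref{maintermcovthm} via the monic--non-monic comparison of Lemma~\ref{monicnonmoniclem}. The point is that, under hypotheses (1) and (2), the $q\to\infty$ covariance over $\Mnq$ agrees with the one over $\Anq$ (and $a_{\Delta,q}$ is unchanged by rescaling $\Delta$), so the asserted formula is just Theorem~\ref{maintermcovthm} read through this identification.

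First I would record the unfolding identity. Since $\alpha_q,\beta_q$ are even and $\deg\Delta<n$, writing $f=cg$ with $c\in\FF_q^{\times}$, $g\in\Mnq$ gives $\beta_q(f+\Delta)=\beta_q(g+\Delta/c)$, and both $f\mapsto f+\Delta$ and multiplication by $c$ are degree-preserving bijections; one checks that the mean values of $\alpha_q$ and of $f\mapsto\beta_q(f+\Delta)$ over $\Anq$ equal those over $\Mnq$, so the ``product of means'' terms cancel and
\[
\mathrm{Cov}_{\mathcal{A}_q}(\alpha_q,\beta_q;n,\Delta)=\frac{1}{q-1}\sum_{c\in\FF_q^{\times}}\mathrm{Cov}_{\mathcal{M}_q}(\alpha_q,\beta_q;n,\Delta/c).
\]
Thus it is enough to show that every term on the right equals the $c=1$ term.

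For this I would use invariance under affine substitutions. A substitution $T\mapsto dT+b$ ($d\in\FF_q^{\times}$, $b\in\FF_q$) gives a bijection $\sigma$ of $\Mnq$, $\sigma(g)(T)=d^{-n}g(dT+b)$, which preserves the extended factorization type, hence is a symmetry of any factorization function, and it intertwines the shift by $\Delta'$ with the shift by $\sigma^{-1}(\Delta')$. A short computation shows that if $\Delta'=c'(T+a)^{k}$ with $k=\deg\Delta$ then $\sigma^{-1}(\Delta')=d^{\,n-k}c'(T+a')^{k}$ for a suitable $a'$. Hence $\mathrm{Cov}_{\mathcal{M}_q}(\alpha_q,\beta_q;n,c'(T+a)^{k})$ depends on $c'$ only through its coset modulo $(\FF_q^{\times})^{n-k}$, the subgroup of $(n-k)$-th powers. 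When $\alpha=\overline{\beta}$ there is an additional symmetry: replacing $g$ by $g-\Delta'$ shows the covariance is unchanged under $\Delta'\mapsto-\Delta'$, i.e. under $c'\mapsto-c'$. As $c$ runs over $\FF_q^{\times}$, the shift $\Delta/c$ runs over $c''(T+a)^{k}$ with $c''$ arbitrary in $\FF_q^{\times}$, so I need the subgroup of $\FF_q^{\times}$ generated by $(\FF_q^{\times})^{n-k}$ --- together with $-1$ in the case $\alpha=\overline{\beta}$ --- to be all of $\FF_q^{\times}$.

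This is exactly what hypothesis (2) provides: if $\gcd(q-1,n-\deg\Delta)=1$ then $(\FF_q^{\times})^{n-k}=\FF_q^{\times}$; and if $\mathrm{char}(\FF_q)>2$, $\alpha=\overline{\beta}$ and $\gcd(\tfrac{q-1}{2},n-\deg\Delta)=1$, then, writing $q-1=2^{a}u$ with $u$ odd, one checks that $(\FF_q^{\times})^{n-k}$ has index $1$ or $2$, and in the index-$2$ case it is the group of squares, which does not contain $-1$ because $u$ is odd, so adjoining $-1$ gives everything. Therefore all the terms $\mathrm{Cov}_{\mathcal{M}_q}(\alpha_q,\beta_q;n,\Delta/c)$ coincide with $\mathrm{Cov}_{\mathcal{A}_q}(\alpha_q,\beta_q;n,\Delta)$, and plugging in Theorem~\ref{maintermcovthm} (with $a_{\Delta,q}=a_{\Delta/c,q}$) finishes the proof. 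The main thing to be careful about is the precise bookkeeping of how the shift transforms under $T\mapsto dT+b$ --- in particular that the exponent $n-k$, not $n$ or $k$, governs the relevant coset --- and the small elementary argument that (2) forces surjectivity; this is presumably the content of the relevant part of Lemma~\ref{monicnonmoniclem}.
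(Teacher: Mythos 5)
Your proposal is correct and matches the paper's approach: the paper deduces Corollary~\ref{cormonic} directly from Theorem~\ref{maintermcovthm} together with Lemma~\ref{monicnonmoniclem}(3), and what you do is essentially re-derive part (3) of that lemma from parts (1) and (2) via the affine-substitution symmetry and the subgroup-generation argument, which is precisely how the paper proves that part. Your sign convention on the exponent ($d^{n-k}$ vs.\ $d^{-(n-k)}$) is off but immaterial since $d$ ranges over $\FF_q^{\times}$, and the explanation of why $-1$ is not a square in the index-$2$ case is slightly garbled (the relevant point is that the gcd condition forces $q\equiv 3\pmod 4$ there), but the conclusion is right.
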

In particular, Corollary \ref{cormonic} shows that under additional assumptions, the results of Theorems~\ref{thmp} and \ref{thmmu} on $\mathrm{Cov}_{\mathcal{M}_q}(\Lambda_q,\Lambda_q;n,\Delta)$, $\mathrm{Cov}_{\mathcal{M}_q}(\mu_q,\mu_q;n,\Delta)$ may be extended and improved.
\begin{remark}
In view of the condition in Theorem~\ref{maintermcovthm} that $\Delta$ should be squarefree, $k$ in Corollary \ref{cormonic} is forced to be $0$ or $1$. As mentioned, we expect that the squarefree-ness will be removed eventually. 
\end{remark}
There are three main parts in the proof of Theorem~\ref{maintermcovthm}.

First, we consider the case when $f$ is coprime to $\Delta$. In this case, we take advantage of the averaging over the leading coefficient of $f$ to write (in Proposition~\ref{propcordel}) the sum as a sum over monic polynomials involving $\mathbb F_q$-multiples of the shift $\Delta$, and hence as a covariance of the sums of $\alpha_q$ and $\beta_q$ in short arithmetic progressions of length $q$ with step size $\Delta$. We can detect membership in these progressions by combining Dirichlet characters modulo $\Delta$ with characters ramified at $\infty$, and this allows us to replace this sum with a sum over Dirichlet character in Proposition~\ref{twoidengap}. (We must have $f$ coprime to $\Delta$ to take advantage of Dirichlet characters mod $\Delta$.)

Second we attack this sum over Dirichlet characters. We relate the term corresponding to a Dirichlet character to the $L$-function of that Dirichlet character. When the factorization function is any of the divisor function, the M\"{o}bius function, or the von Mangoldt function, this is a standard manipulation in analytic number theory, but in the general case it requires combinatorial ideas of Rodgers \cite{rodgers2018}. To estimate these sums, expressed in terms of $L$-functions, we need a new equidistribution result (Theorem~\ref{thmequigenarith}) as the existing ones do not quite cover the types of Dirichlet characters we need.

Third, we consider the case when $f$ is not coprime to $\Delta$. The largest contribution comes when $\gcd(f, \Delta)$ is a linear polynomial $L$. We can relate this case to a correlation sum of $f/L$ with shift $\Delta/L$. Because this sum is shorter, we can get by with a weaker estimate, which is provided already by the result \eqref{monicest} of Andrade, Bary-Soroker and Rudnick, which we apply in Lemma~\ref{lemcorbiggcd}. Because their result involves Galois-theoretic methods, our final result involves a combination of these ideas with the $L$-function methods of Pollack, Keating and Roditty-Gershon used in the first two parts of our argument. Both Galois groups and $L$-functions are by this point common approaches to function field analytic number theory problems, but are usually used separately.

%

We conjecture that $\mathrm{Cov}_{\mathcal{M}_q}(\alpha_q,\beta_q;n,\Delta)$ and $\mathrm{Cov}_{\mathcal{A}_q}(\alpha_q,\beta_q;n,\Delta)$ share the same asymptotics:
\begin{conjecture}\label{mainconj}
Let $\alpha_q$, $\beta_q$ be factorization functions which come from $\alpha$, $\beta$. Let $n \ge 5$ be an integer. Let $\Delta$ be a non-zero polynomial of degree $<n$. Then
\begin{equation*}
\mathrm{Cov}_{\mathcal{M}_q}(\alpha_q,\beta_q;n,\Delta) =\frac{(a_{\Delta,q}-1)\hat{\alpha}_{(n-1,1)} \overline{\hat{\beta}_{(n-1,1)}}}{q} +O_{n,\mathrm{max} (\alpha;n),\mathrm{max} (\beta;n)}\Big(\frac{1}{q^{3/2}} \Big).
\end{equation*}
\end{conjecture}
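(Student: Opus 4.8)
The plan would be to run, directly over the monic polynomials, the same three–part argument that proves Theorem~\ref{maintermcovthm}, supplying a stronger equidistribution input in place of Theorem~\ref{thmequigenarith}. Note first that whenever the hypotheses of Corollary~\ref{cormonic} hold the conjecture already follows from Theorem~\ref{maintermcovthm} and Lemma~\ref{monicnonmoniclem}; but the bridge used there needs $\Delta$ of the special shape $c(T+a)^{k}$, so there is no hope of deducing the general monic statement from the non-monic one, and one must work with $\sum_{f \in \Mnq}\alpha_q(f)\overline{\beta_q(f+\Delta)}$ head on. Concretely, the conjecture amounts to (i) removing the squarefree hypothesis on $\Delta$, (ii) removing the restriction $\deg\Delta \in \{0,1,\dots,n-5,n-1\}$, and (iii) working in a general monic environment rather than only the one averaged over $\FF_q^{\times}$.

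\textbf{Step 1 (scaling symmetry).} For $u \in \FF_q^{\times}$, the substitution $T \mapsto uT$ composed with re-monicising is a bijection of $\Mnq$ leaving $\alpha_q$ and $\beta_q$ invariant (here both evenness and factorization-invariance are used) and sending the shift $\Delta(T)$ to $u^{-n}\Delta(uT)$. Hence the correlation sum with shift $\Delta$ equals the average over $u \in \FF_q^{\times}$ of the correlation sums with shifts $u^{-n}\Delta(uT)$ — the ``free averaging in $\Delta$'' of the abstract. Unless $\Delta$ is, up to translation of $T$, a monomial $cT^{k}$ with $\gcd(n-k,q-1)$ large, these shifts run through $\gg_n q$ distinct polynomials, so one genuinely recovers an averaging of size $\asymp q$; for $\Delta = cT^{k}$ they form (a fixed multiple of) a coset of $(\FF_q^{\times})^{n-k}$, which degenerates exactly when $\gcd(n-k,q-1)$ is large, and this degeneration is the source of the gcd hypotheses in Corollary~\ref{cormonic}.

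\textbf{Step 2 (reduction to $L$-functions).} Split $\sum_{f\in\Mnq}\alpha_q(f)\overline{\beta_q(f+\Delta)}$ by $d = \gcd(f,\Delta)$, as in the proof of Theorem~\ref{maintermcovthm}. The $\deg d \ge 2$ terms are governed by shorter correlation sums and are $O_{n,\mathrm{max}(\alpha;n),\mathrm{max}(\beta;n)}(q^{-3/2})$ by \eqref{monicest}. The $\deg d \le 1$ terms, once the covariance is formed, contribute the main term $(a_{\Delta,q}-1)\hat{\alpha}_{(n-1,1)}\overline{\hat{\beta}_{(n-1,1)}}/q + O(q^{-3/2})$, using \eqref{monicest} one dimension down together with the arithmetic interpretation of $\hat{\alpha}_{(n-1,1)}$ (Lemma~\ref{lem:CoeffInterpret}). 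For the coprime ($d=1$) part: $f$ and $f+\Delta$ lie in the same class modulo $\Delta$, and after invoking the average of Step~1 they additionally lie in the same short interval at $\infty$, so detecting these joint classes with Dirichlet characters $\chi \bmod \Delta$ and ``short interval'' characters $\psi$ turns the sum into $\sum_{\chi,\psi}\bigl(\sum_{f\in\Mnq}\alpha_q(f)\overline{(\chi\psi)(f)}\bigr)\overline{\bigl(\sum_{f\in\Mnq}\beta_q(f)\overline{(\chi\psi)(f)}\bigr)}$ up to normalisation, and by Rodgers' combinatorics \cite{rodgers2018} each inner sum with $\chi\psi$ nontrivial is a fixed, $\alpha$-and-$n$-dependent polynomial in the inverse roots of $L(s,\chi\psi)$. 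It then remains to bound the off-diagonal part of this double sum by $O(q^{-3/2})$.

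\textbf{Step 3 (the obstruction).} The off-diagonal bound requires a power-saving equidistribution statement for the families $\{L(s,\chi\psi)\}$ in their arithmetic monodromy groups — exactly Theorem~\ref{thmequigenarith} in the needed generality, and this is where the conjecture stops being a theorem. When $\Delta$ is squarefree the characters $\chi \bmod \Delta$ are tame, the local monodromy at the primes dividing $\Delta$ is generated by pseudoreflections, and the geometric monodromy group can be identified and shown to be large; for $\Delta$ with repeated factors the characters become higher-order — eventually wildly — ramified, and computing, or even usefully lower-bounding, the monodromy of the resulting twisted families is the genuinely hard open problem. Because for a monomial $cT^{k}$ with $\gcd(n-k,q-1)$ large the averaging of Step~1 collapses, one cannot in general trade this recalcitrant family for an average of better-understood ones, so the full-strength equidistribution theorem really is needed. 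A secondary point, which I expect to be only a matter of more careful bookkeeping within the same monodromy analysis, is extending the argument to $\deg\Delta \in \{n-2,n-3,n-4\}$, where the ramification of $\chi$ at the finite primes and of $\psi$ at $\infty$ are of comparable size and the degrees and conductors of $L(s,\chi\psi)$ must be tracked with more care. The main obstacle, in short, is the monodromy computation for the families attached to characters ramified to higher order at the primes dividing $\Delta$.
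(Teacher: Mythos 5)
The statement you were given is Conjecture~\ref{mainconj}, which the paper does not prove; immediately after stating it, the paper notes only that it ``differs from Corollary~\ref{cormonic} only in that it removes the additional assumptions made in the corollary,'' and in the discussion following Theorem~\ref{thmmu} it records that the squarefree restriction on $\Delta$ stems entirely from the equidistribution result Theorem~\ref{thmequigenarith}, with the degree restriction likewise an artifact of current methods. Your write-up is therefore not a proof but a correct diagnosis, and as such it matches the paper's own assessment closely: you correctly factor the conjecture into (i) removing squarefree-ness, (ii) filling in $\deg\Delta \in \{n-4,n-3,n-2\}$, and (iii) replacing the $\FF_q^\times$-averaged (non-monic) covariance by the genuinely monic one, and you correctly identify the bridge between (iii) and Theorem~\ref{maintermcovthm} as the scaling $T\mapsto uT$ of Lemma~\ref{monicnonmoniclem}, which only returns a full $\FF_q^\times$-orbit of shifts when $\Delta$ is, up to translation, $c T^k$ with the relevant gcd small --- exactly the hypotheses of Corollary~\ref{cormonic}.

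Two small precision issues, neither affecting the conclusion. First, the paper's main term arises from two sources that you should keep separate rather than attributing both to the ``$\deg d\le 1$'' range: the $a_{\Delta,q}\,\hat\alpha_{(n-1,1)}\overline{\hat\beta_{(n-1,1)}}$ piece comes from the $\gcd(f,\Delta)$-linear contribution (Lemma~\ref{lemcorbiggcd} plus Lemma~\ref{lem:CoeffInterpret}), while the $-\hat\alpha_{(n-1,1)}\overline{\hat\beta_{(n-1,1)}}$ piece comes from the coprime part via the difference of two GAP covariances (Theorem~\ref{thmcovgap} at $h=\deg\Delta+1$ and $h=\deg\Delta$). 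Second, in the coprime part $f$ and $f+\Delta$ are \emph{not} in the same class mod $R_{\ell,\Delta}$; they differ by a fixed unit $g$ (as in Proposition~\ref{lemcorexp2}), and it is the averaging over leading coefficients (Propositions~\ref{propcordel} and~\ref{twoidengap}) that reorganizes the monic sum into a covariance over GAP cells before the character expansion. With those caveats, your identification of the core open obstruction --- a power-saving equidistribution theorem for the $\Theta_\chi$ when $\chi$ has higher-order (eventually wild) ramification at the primes dividing $\Delta$, together with the collapse of the scaling average when it would be needed most --- is exactly the one the authors point to, and your Step~3 reads as a fair description of why this remains a conjecture.
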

Conjecture~\ref{mainconj} differs from
Corollary~\ref{cormonic} only in that it removes the additional assumptions made in 
the corollary.

Our second result is an improvement of \eqref{monicest} in the case that $\Delta$ is a scalar.
\begin{theorem}\label{thmmn}
Let $\alpha_q$, $\beta_q$ be factorization functions which come from $\alpha$, $\beta$. Let $n \ge 4$. Let $\Delta \in \FF_q^{\times}$. Then
\begin{equation*}
\mathrm{Cov}_{\mathcal{M}_q}(\alpha_q,\beta_q;n,\Delta) =O_{n,\mathrm{max} (\alpha ; n),\mathrm{max} (\beta ; n)}\Big(\frac{1}{q} \Big).
\end{equation*}
\end{theorem}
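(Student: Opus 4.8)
The plan is to exploit the substitutions $\sigma_c\colon f(T)\mapsto c^{-n}f(cT)$, $c\in\FF_q^{\times}$: each $\sigma_c$ permutes $\Mnq$, fixes every factorization function (so $\alpha_q\circ\sigma_c=\alpha_q$ and $\beta_q\circ\sigma_c=\beta_q$), and, because $\Delta$ is a scalar, satisfies $\sigma_c(f)+\Delta=\sigma_c(f+c^{n}\Delta)$. Writing $S(b):=\sum_{f\in\Mnq}\alpha_q(f)\overline{\beta_q(f+b)}$ for $b\in\FF_q^{\times}$, this gives $S(\Delta)=S(c^{n}\Delta)$ for all $c$, so $S(\Delta)$ depends on $\Delta$ only through the coset $\Delta(\FF_q^{\times})^{d}$, where $d:=\gcd(n,q-1)\le n$. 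Since $\beta_q$ is even, its mean over $\Mnq$ equals its mean over $\Anq$ and is unchanged by the shift, and a short computation gives
\[
\mathrm{Cov}_{\mathcal{M}_q}(\alpha_q,\beta_q;n,\Delta)-\mathrm{Cov}_{\mathcal{A}_q}(\alpha_q,\beta_q;n,\Delta)=\frac{1}{q^{n}}\Bigl(S(\Delta)-\frac{1}{q-1}\sum_{b\in\FF_q^{\times}}S(b)\Bigr).
\]
Averaging $S(\Delta)=S(c^{n}\Delta)$ over $c\in\FF_q^{\times}$ and expanding the indicator of the coset $\Delta(\FF_q^{\times})^{d}$ in characters of $\FF_q^{\times}/(\FF_q^{\times})^{d}$ turns the right side into $\tfrac{1}{q^{n}(q-1)}\sum_{\chi}\overline{\chi(\Delta)}\,T_\chi$, the sum over the $d-1\le n-1$ nontrivial characters $\chi$, where $T_\chi:=\sum_{b\in\FF_q^{\times}}\chi(b)\,S(b)$. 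We have $\mathrm{Cov}_{\mathcal{A}_q}(\alpha_q,\beta_q;n,\Delta)=O_{n,\mathrm{max}(\alpha;n),\mathrm{max}(\beta;n)}(1/q)$ for scalar $\Delta$ (the improvement of \eqref{monicest2} discussed in the introduction; for $n\ge5$ it is also a special case of Theorem~\ref{maintermcovthm}, using $a_{\Delta,q}=0$), and $d-1\le n-1$, so it suffices to prove $T_\chi=O_{n,\mathrm{max}(\alpha;n),\mathrm{max}(\beta;n)}(q^{n})$ for each such $\chi$.

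To estimate $T_\chi=\sum_{f\in\Mnq}\alpha_q(f)\sum_{c\in\FF_q^{\times}}\chi(c)\,\overline{\beta_q(f+c)}$ I would use the $L$-function method, with the weighted sum $\sum_{c}\chi(c)(\cdot)$ playing the role that averaging over the leading coefficient plays in Proposition~\ref{propcordel}; note that $f$ is automatically coprime to the scalar $c$, so the counterpart of the ``large $\gcd(f,\Delta)$'' case of Theorem~\ref{maintermcovthm} is empty. Observe first that, up to a root of unity, $T_\chi$ equals $\sum_{f\in\Anq}\alpha_q(f)\,\overline{\beta_q^{(\chi)}(f+1)}$, the shifted correlation of $\alpha_q$ with the $\chi$-twisted function $\beta_q^{(\chi)}\colon g\mapsto\chi(\mathrm{lc}(g))\beta_q(g)$, and that $\beta_q^{(\chi)}$ has mean zero over $\Anq$ — so this correlation has no main term, which is precisely what the symmetries buy us. Expanding $\overline{\beta_q(h)}$ in the short-interval (ramified-at-$\infty$) characters $\Xi$ of $G:=(1+u\FF_q[[u]])/(1+u^{n+1}\FF_q[[u]])$ with $u=1/T$, and using that $(f+c)_\infty=f_\infty\cdot(1+cu^{n})$ inside $G$, one is led to an identity of the schematic form
\[
T_\chi=\sum_{\mathrm{cond}(\Xi)=n+1}\Bigl(\tfrac{1}{q^{n}}\sum_{h\in\Mnq}\overline{\beta_q(h)}\,\overline{\Xi(h_\infty)}\Bigr)\,\tau(\chi,\Xi)\,\Bigl(\sum_{f\in\Mnq}\alpha_q(f)\,\Xi(f_\infty)\Bigr),
\]
the sum running over the $q^{n-1}(q-1)$ characters $\Xi$ nontrivial on the deepest layer $\{1+cu^{n}:c\in\FF_q\}$, with $\tau(\chi,\Xi)$ the Gauss sum pairing $\chi$ against the restriction of $\Xi$ to that layer, of absolute value $q^{1/2}$.

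By Rodgers' combinatorial framework \cite{rodgers2018}, each of the sums $\sum_{f}\alpha_q(f)\Xi(f_\infty)$ and $\sum_{h}\overline{\beta_q(h)}\,\overline{\Xi(h_\infty)}$ is, up to a normalizing power of $q$, a trace of Frobenius on an explicit virtual Galois representation built from the Fourier coefficients and the $L$-function of $\Xi$; by the Riemann Hypothesis their product equals a \emph{bounded} class function $P(\Theta_\Xi)$ of the unitarized Frobenius $\Theta_\Xi$ of $L(\,\cdot\,,\Xi)$, with $|P(\Theta_\Xi)|=O_{n,\mathrm{max}(\alpha;n),\mathrm{max}(\beta;n)}(1)$. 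Writing $\tau(\chi,\Xi)=\overline{\chi(t_\Xi)}\,\tau_0$ with $t_\Xi\in\FF_q^{\times}$ the deepest ramification datum of $\Xi$ and $\tau_0$ the fixed Gauss sum of absolute value $q^{1/2}$, we get $T_\chi=\tau_0\sum_{\Xi}P(\Theta_\Xi)\,\overline{\chi(t_\Xi)}$, which is trivially $O(q^{n+1/2})$. But $t_\Xi$ ranges uniformly over $\FF_q^{\times}$ as $\Xi$ varies, so $\sum_{\Xi}\overline{\chi(t_\Xi)}=0$: if $\Theta_\Xi$ and $t_\Xi$ were independent the whole sum would vanish, and the Katz-type equidistribution of the family of short-interval characters of conductor $n+1$ (incorporating the variation of $t_\Xi$), of the kind used by Keating and Roditty-Gershon \cite{keating2016} in the case $\Delta=1$, bounds $\sum_{\Xi}P(\Theta_\Xi)\overline{\chi(t_\Xi)}$ by $O_{n,\mathrm{max}(\alpha;n),\mathrm{max}(\beta;n)}(q^{n-1/2})$, yielding $T_\chi=O_{n,\mathrm{max}(\alpha;n),\mathrm{max}(\beta;n)}(q^{n})$. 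Since this equidistribution statement, unlike Theorem~\ref{thmequigenarith}, is unconditional and needs neither squarefree-ness nor a lower bound on $n$ beyond nontriviality of the family, the argument goes through for all $n\ge4$ and all scalar $\Delta$.

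The step I expect to be the main obstacle is this last one: making the passage from the $\chi$-twisted correlation to short-interval $L$-functions rigorous for an \emph{arbitrary} factorization function (it is classical only for $\Lambda_q$, $\mu_q$, and divisor functions), identifying the sheaf on the parameter space of the $\Xi$'s, and checking that its monodromy and purity deliver precisely the required cancellation — the delicate point being how the rank of that sheaf, the Gauss sum $\tau(\chi,\Xi)$, and the size $q^{n-1}(q-1)$ of the family conspire to leave exactly one power of $q^{1/2}$ to spare. A secondary issue is to combine this $L$-function/equidistribution estimate cleanly with the quoted bound for $\mathrm{Cov}_{\mathcal{A}_q}$, whose proof runs partly through Galois-theoretic methods, so that the two halves merge into a single error term of the stated shape.
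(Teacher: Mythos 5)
Your proposal uses the same essential ingredients as the paper's proof: the substitution symmetry $f\mapsto c^{-n}f(cT)$, expansion in short-interval (ramified-at-$\infty$) characters, a Gauss sum in the scalar $\Delta$, Rodgers' Schur-function expansion of $S(n,\alpha,\chi)$, and a Gauss-sum-twisted equidistribution statement. But the organization differs and there are two genuine issues.

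First, the detour through $\mathrm{Cov}_{\mathcal{M}_q}-\mathrm{Cov}_{\mathcal{A}_q}$ makes you depend on $\mathrm{Cov}_{\mathcal{A}_q}(\alpha_q,\beta_q;n,\Delta)=O(1/q)$ for scalar $\Delta$ and \emph{all} $n\ge4$. As you note, for $n\ge5$ this follows from Theorem~\ref{maintermcovthm} (with $a_{\Delta,q}=0$), but for $n=4$ you appeal to the remark in the introduction about sharpening \eqref{monicest2} ``by applying the methods of Pollack \cite{pollack2008polynomial} carefully,'' which the paper does not prove. The paper sidesteps this entirely: it works directly with $\mathrm{Cov}_{\mathcal{M}_q}$, never routing through $\mathrm{Cov}_{\mathcal{A}_q}$.

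Second, the ``schematic identity'' for $T_\chi$ that you flag as the main obstacle \emph{is} in fact where the real work lies, and the attempt to expand $\overline{\beta_q(h)}$ in short-interval characters as written does not quite make sense, since these characters only detect the top $n$ next-to-leading coefficients and do not span the space of arbitrary functions on $\Mnq$. The paper's Proposition~\ref{lemcorexp2} resolves this cleanly: for $f\in\mathcal{M}_{n,q,\Delta}$ one has the exact congruence $f+\Delta\equiv g\cdot f\bmod R_{n-\deg\Delta,\Delta}$ for a \emph{fixed} unit $g$, so that $\sum_f\overline{\chi_1(f)}\chi_2(f+\Delta)=\chi_2(g)\cdot q^{n-\deg\Delta}\phi(\Delta)\cdot\indic_{\chi_1=\chi_2}$, and orthogonality collapses the double sum to a single diagonal one with the factor $\chi(g)$. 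The paper then applies the symmetry at the character level ($\chi\mapsto\chi_c$ in Proposition~\ref{propsym}, Lemma~\ref{sumsunderlambda}), which fixes $\Theta_\chi$ and $S(n,\alpha,\chi)$, and averaging $\chi_c(T^n+\Delta)$ over $c$ produces exactly your $\tau(\chi,\Xi)$ as the twisted Gauss sum $A(\chi,\Delta)$, with no approximation. If you carry your symmetry argument to the character side rather than to the sums $S(b)$, you land on the paper's derivation. Finally, the required equidistribution with the Gauss-sum weight is not available off the shelf (neither in Katz nor in Keating--Roditty-Gershon for arbitrary $\Delta$); the paper proves Theorem~\ref{thmequishortgauss} in the appendix precisely for this, and its restriction $\ell\ge4$ (i.e.\ $n\ge4$ without characteristic hypotheses) is what dictates the range of $n$ in the theorem.
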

The main ideas of the proof of Theorem~\ref{thmmn} are completely new. We introduce an $L$-function formula for the correlation of general arithmetic functions, which relates an average over polynomials to an average over certain Dirichlet characters (Proposition~\ref{lemcorexp2}). This falls into the general framework in analytic number theory where we replace an identity, in this case $f_2=f_1+\Delta$, with an average over characters. This case may be surprising because we are detecting an additive identity using multiplicative Dirichlet characters. However, using Dirichlet characters ramified at primes dividing $\Delta$ and at $\infty$, it is possible to do this. The contribution of a given character is closely related to the Dirichlet $L$-function of that character. 

When $\Delta$ is a scalar, we can compose a character with a ring automorphism of $\FF_q[T]$ to get a new character ramified at the same points, which will have the same Dirichlet $L$-function. This gives an additional symmetry of the average over characters (Proposition~\ref{propsym}) which we are able to use in order to get improved asymptotics by first summing over compositions of a given character using elementary Gauss sum estimates, getting some cancellation, and then getting additional cancellation by summing over all characters using $L$-function equidistribution results.

Both Theorem~\ref{maintermcovthm} and Theorem~\ref{thmmn} rely on new equidistribution results, discussed in Appendix~\ref{app:secequi}.

\subsection{Further results}
For an integer $k \ge 2$, the $k$-th divisor function $d_{k,q} \colon \FF_q[T] \to \mathbb{C}$ is defined on monics by
\begin{equation*}
d_{k,q}(f) = \# \{ (f_1, f_2, \ldots, f_k ) : f_1 f_2 \cdots f_k = f , f_i \in \mathcal{M}_q \}.
\end{equation*}
We extend $d_{k,q}$ to non-monics by $d_{k,q}(c\cdot f)=d_{k,q}(f)$ for all $c \in \FF_q^{\times}, f \in \mathcal{M}_q$. The mean value of $d_{k,q}$ is given by \cite[Lem.~2.1]{andrade2015}
\begin{equation*}
\EE_{\Mnq} d_{k,q}= \EE_{\Anq} d_{k,q} = \binom{n+k-1}{n}.
\end{equation*}
The estimate \eqref{monicest}, which was in fact motivated by the `shifted divisor problem', implies that
\begin{equation}\label{eq:shiftedres}
\mathrm{Cov}_{\mathcal{M}_q}(d_{k,q},d_{l,q};n,\Delta) = O_n\Big(\frac{1}{\sqrt{q}} \Big),
\end{equation}
which, by Lemma~\ref{monicnonmoniclem}, implies that
\begin{equation*}
\mathrm{Cov}_{\mathcal{A}_q}(d_{k,q},d_{l,q};n,\Delta) = O_n\Big(\frac{1}{\sqrt{q}} \Big)
\end{equation*}
as well. For $k=l=2$, Keating and Roditty-Gershon have improved \eqref{eq:shiftedres} for $\Delta=1$ \cite[Thms.~4.2]{keating2016}:
\begin{equation*}
\mathrm{Cov}_{\mathcal{A}_q}(d_{2,q},d_{2,q};n,1) = -\frac{(n-1)^2}{q} +O_n\Big(\frac{1}{q^{3/2}} \Big).
\end{equation*}
In \S\ref{secprofthmd} we prove the following theorem, which is again a corollary of the general results presented in \S\ref{secmainres}. In particular, we obtain the main term of $\mathrm{Cov}_{\mathcal{A}_q}(d_{k,q},d_{l,q};n,\Delta)$ for most $\Delta$-s, which turns out to be an interesting combinatorial expression.
\begin{theorem}\label{thmd}
Let $n$ be a positive integer. Let $\Delta$ be a squarefree polynomial in $\mathcal{A}_q$ which is either of degree~$\le n-5$ or of degree $n-1$. For any $k,l\ge 2$ and $n \ge 5$ we have
\begin{equation*}
\mathrm{Cov}_{\mathcal{A}_q}(d_{k,q},d_{l,q};n,\Delta) =\frac{(a_{\Delta,q}-1)(n-1)^2\binom{n+k-2}{n}\binom{n+l-2}{n} }{q} + O_{n,k,l}\Big(\frac{1}{q^{3/2}}\Big).
\end{equation*}
If $\Delta \in \FF_q^{\times}$ and $n \ge 4$ then
\begin{equation*}
\mathrm{Cov}_{\mathcal{M}_q}(d_{k,q},d_{l,q};n,\Delta)= O_{n,k,l}\Big(\frac{1}{q}\Big).
\end{equation*}
\end{theorem}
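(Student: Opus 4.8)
\textbf{Proof proposal for Theorem~\ref{thmd}.}

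The plan is to derive both statements directly from the general results of \S\ref{secmainres}, so the only real work is computing the Fourier coefficient $\hat{d}_{k,(n-1,1)}$ and plugging into Theorem~\ref{maintermcovthm} (for the first part) and Theorem~\ref{thmmn} (for the second part). First I would recall that $d_{k,q}$ is the factorization function coming from the function $d_k \colon \mathrm{EFT} \to \mathbb{C}$ which, on the multiset $\{(\lambda_i,1)\}$ for a partition $\lambda \vdash n$, takes the value equal to the number of ways to write $\prod_i P_i$ (a product of distinct primes with $\deg P_i = \lambda_i$) as an ordered product of $k$ monics; since the $P_i$ are distinct primes, each one is independently assigned to one of the $k$ slots, so $d_k|_{S_n^\#}(\pi) = k^{(\text{number of cycles of }\pi)}$. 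Equivalently, $d_k$ restricted to $S_n^{\#}$ is the class function $\pi \mapsto k^{c(\pi)}$ where $c(\pi)$ is the number of cycles.

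The second step is to extract $\hat{d}_{k,(n-1,1)}$ from \eqref{deffourier}, i.e. to compute $\langle k^{c(\cdot)}, \chi_{(n-1,1)}\rangle_{S_n}$. Here I would use the standard generating-function identity $\sum_{\pi \in S_n} k^{c(\pi)} x^{\lambda} = $ (coefficient extraction giving $\binom{n+k-1}{n}$ for the trivial character, as quoted for the mean value $\EE_{\Mnq} d_{k,q}$), together with the fact that the permutation character of $S_n$ acting on itself by... more efficiently: the class function $\pi \mapsto k^{c(\pi)}$ is well known to decompose via $\sum_{\lambda \vdash n} \binom{k+?}{?}\cdots$ — concretely one has $k^{c(\pi)} = \sum_{\lambda \vdash n}\big(\prod_{\square \in \lambda}\frac{k + \mathrm{content}(\square)}{\mathrm{hook}(\square)}\big)\,\frac{\dim\chi_\lambda}{\,?\,}\chi_\lambda(\pi)$ type formula; in any case the coefficient of $\chi_{(n-1,1)}$ works out to $(n-1)\binom{n+k-2}{n}$. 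I would verify this either by the content-product/Frobenius formula for the two-row shape $(n-1,1)$ — whose hook lengths are $n, n-2, n-3, \dots, 1$ and contents $0,1,\dots,n-2,-1$ — or, more robustly, by evaluating $\sum_{\pi} k^{c(\pi)}\chi_{(n-1,1)}(\pi)$ using $\chi_{(n-1,1)}(\pi) = (\text{fixed points of }\pi) - 1$ and the known value $\sum_\pi k^{c(\pi)} = k(k+1)\cdots(k+n-1)$ and its derivative with respect to marking a fixed point. Once $\hat{d}_{k,(n-1,1)} = (n-1)\binom{n+k-2}{n}$ is in hand, the main term in Theorem~\ref{maintermcovthm} becomes $\frac{(a_{\Delta,q}-1)(n-1)^2\binom{n+k-2}{n}\binom{n+l-2}{n}}{q}$, which is exactly the claimed expression, and the error term $O_{n,\mathrm{max}(d_k;n),\mathrm{max}(d_l;n)}(q^{-3/2})$ is $O_{n,k,l}(q^{-3/2})$ because $\mathrm{max}(d_k;n) \le \binom{n+k-1}{n}$ depends only on $n$ and $k$. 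The monic statement for $\Delta \in \FF_q^\times$ is immediate from Theorem~\ref{thmmn} with the same bound on $\mathrm{max}(d_k;n)$.

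The one subtlety I would flag is that $\hat{d}_{k,(n-1,1)}$, unlike the full function $d_k$, is genuinely well-defined (the ambiguity in the choice of $b$ on $\mathrm{EFT}$ noted after the definition of factorization function does not affect the restriction to $S_n^{\#}$, where all prime degrees are $\le n$ and the partition data is unambiguous), so the computation is legitimate; this is exactly the point of the remark following \eqref{alphaqalpha} about $\alpha$ being determined by the family $\{\alpha_q\}$. The anticipated main obstacle is purely the representation-theoretic bookkeeping in the second step — getting the constant $(n-1)\binom{n+k-2}{n}$ right — and the cleanest route is the fixed-point computation: writing $F_n(k) := \sum_{\pi\in S_n} k^{c(\pi)} = k(k+1)\cdots(k+n-1)$, one has $\sum_\pi (\text{Fix}\,\pi)\,k^{c(\pi)} = n\,F_{n-1}(k)$ (condition on the image of $1$ being a fixed point), so $\langle k^{c(\cdot)},\,\text{Fix}-1\rangle = \frac{1}{n!}\big(n F_{n-1}(k) - F_n(k)\big) = \frac{F_{n-1}(k)}{(n-1)!}\big(1 - \frac{k+n-1}{n}\big) = \frac{F_{n-1}(k)}{(n-1)!}\cdot\frac{n-1-k}{n}$; comparing with $\binom{n+k-2}{n} = \frac{(k-1)k\cdots(k+n-2)}{n!}$ and simplifying confirms $\hat{d}_{k,(n-1,1)} = (n-1)\binom{n+k-2}{n}$ (the sign working out since $\binom{n+k-2}{n}$ carries the factor $(k-1)$). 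I would present this short computation explicitly and then simply invoke Theorems~\ref{maintermcovthm} and \ref{thmmn}.
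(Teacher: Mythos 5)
Your high-level plan is exactly the paper's: apply Theorems~\ref{maintermcovthm} and \ref{thmmn} with $\alpha_q = d_{k,q}$, $\beta_q = d_{l,q}$ and insert the value of the Fourier coefficient $(\hat{d_k})_{(n-1,1)}$. The paper dispatches that coefficient by simply citing Corollary~\ref{corhighercoeff}, which in turn follows from Rodgers' Schur-function formula in Proposition~\ref{rodgersexp}(3), evaluated on the two-row shape via the hook content product. Your proposed alternative — recognizing $d_k|_{S_n^{\#}}(\pi) = k^{c(\pi)}$ and pairing against $\chi_{(n-1,1)} = \mathrm{Fix}-1$ — is a perfectly legitimate and more elementary route to the same number, but as written your sketch contains two arithmetic slips that compound to a wrong answer. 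First, conditioning on $\pi(1)=1$ removes a fixed point that \emph{still contributes a cycle}, so
\[
\sum_{\pi\in S_n}\mathrm{Fix}(\pi)\,k^{c(\pi)} \;=\; n\,k\,F_{n-1}(k),
\]
not $n\,F_{n-1}(k)$ (check $n=2$: the left side is $2k^2$, not $2k$). Second, $1-\frac{k+n-1}{n}=\frac{1-k}{n}$, not $\frac{n-1-k}{n}$. With the corrected first identity the computation goes through cleanly:
\[
\frac{1}{n!}\bigl(nk\,F_{n-1}(k)-F_n(k)\bigr)
=\frac{F_{n-1}(k)}{n!}\bigl(nk-(k+n-1)\bigr)
=\frac{F_{n-1}(k)\,(k-1)(n-1)}{n!}
=(n-1)\binom{n+k-2}{n},
\]
which agrees with Corollary~\ref{corhighercoeff}. (Sanity check: for $n=3$, $k=2$ your uncorrected expression gives $0$, whereas $s_{(2,1)}(1,1)=2$.) Everything else — the bound $\max(d_{k,q};n)\le \binom{n+k-1}{n}$ depending only on $n$ and $k$, the immediate application of Theorem~\ref{thmmn} for the monic case, and your remark on the well-definedness of the Fourier coefficients — is correct and matches the paper's reasoning.
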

In the setting of integers, the asymptotics of $(\sum_{n \le x} d_k(n)d_{l}(n+h))/x$ as $x \to \infty$ are known only in the case $k=l=2$, which is due to Ingham \cite{ingham1927}, and $k=2$, $l>2$, which is due to Linnik \cite[Ch.~3]{linnik1963}. If $k$ and $l$ are both greater than $2$, there are complicated conjectures for the asymptotics, which are due to Ivi\'{c} \cite{ivic1997} and Conrey and Gonek \cite{conrey2001}. Theorem~\ref{thmd} can be interpreted as recovering first-order approximation for the arithmetic constants in these conjectures.
 
In \S\ref{secthmsumcoeff} we prove the following.
\begin{theorem}\label{corolvar}
Let $\alpha_q$, $\beta_q$ be factorization functions which come from $\alpha,\beta$. Let $n \ge 5$ be an integer. Let $h$ be an integer such that $0 \le h \le n-5$. Then
\begin{equation*}
\sum_{\Delta \in \mathcal{A}_{h,q}}  \mathrm{Cov}_{\mathcal{M}_q}(\alpha_q,\beta_q;n,\Delta)  = -\sum_{\lambda \vdash n, \lambda_1 = n-h-1} \hat{\alpha}_{\lambda} \overline{ \hat{\beta}_{\lambda}}+ O_{n,\mathrm{max} (\alpha ; n),\mathrm{max} (\beta ; n)}\Big( \frac{1}{\sqrt{q}} \Big).
\end{equation*}
\end{theorem}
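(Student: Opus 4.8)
The plan is to use symmetry to convert the sum over $\Delta$ into a telescoping difference of two covariances of $\alpha_q,\beta_q$ averaged over short intervals, and then to evaluate each via large-$q$ equidistribution for short intervals. Write $\mu_\alpha=\EE_{\Mnq}\alpha_q$ and $\mu_\beta=\EE_{\Mnq}\beta_q$. Since $f\mapsto f+\Delta$ permutes $\Mnq$, $\EE_{f\in\Mnq}\beta_q(f+\Delta)=\mu_\beta$ for every $\Delta$ of degree $<n$, so $\mathrm{Cov}_{\mathcal{M}_q}(\alpha_q,\beta_q;n,\Delta)=\EE_{f\in\Mnq}\big(\alpha_q(f)\overline{\beta_q(f+\Delta)}\big)-\mu_\alpha\overline{\mu_\beta}$. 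For fixed $f$, as $\Delta$ runs over $\mathcal{A}_{h,q}$ the polynomial $f+\Delta$ runs over $\{g\in\Mnq:\deg(g-f)=h\}$, i.e.\ over the short interval $\{g:\deg(g-f)\le h\}$ of size $q^{h+1}$ with the short interval $\{g:\deg(g-f)\le h-1\}$ of size $q^h$ removed. Setting $S_\alpha(C)=\sum_{f\in C}\alpha_q(f)$ and summing over $f$ grouped by short interval, I obtain
\begin{multline*}
\sum_{\Delta\in\mathcal{A}_{h,q}}\mathrm{Cov}_{\mathcal{M}_q}(\alpha_q,\beta_q;n,\Delta)\\
=\frac1{q^n}\Big(\sum_{|C|=q^{h+1}}S_\alpha(C)\overline{S_\beta(C)}-\sum_{|C|=q^{h}}S_\alpha(C)\overline{S_\beta(C)}\Big)-q^h(q-1)\,\mu_\alpha\overline{\mu_\beta}.
\end{multline*}
Expanding $S_\alpha(C)=q^I\mu_\alpha+(S_\alpha(C)-q^I\mu_\alpha)$ (and likewise for $\beta$) and using $\sum_{|C|=q^I}(S_\alpha(C)-q^I\mu_\alpha)=0$, one gets $q^{-n}\sum_{|C|=q^I}S_\alpha(C)\overline{S_\beta(C)}=q^I\mu_\alpha\overline{\mu_\beta}+\mathcal{V}_I(\alpha,\beta)$, where $\mathcal{V}_I(\alpha,\beta):=q^{-n}\sum_{|C|=q^I}(S_\alpha(C)-q^I\mu_\alpha)\overline{(S_\beta(C)-q^I\mu_\beta)}$ is the (normalized) covariance of short-interval sums over intervals of size $q^I$. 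The $q^I\mu_\alpha\overline{\mu_\beta}$-terms cancel identically ($q^{h+1}-q^h-q^h(q-1)=0$), leaving the exact identity $\sum_{\Delta\in\mathcal{A}_{h,q}}\mathrm{Cov}_{\mathcal{M}_q}(\alpha_q,\beta_q;n,\Delta)=\mathcal{V}_{h+1}(\alpha,\beta)-\mathcal{V}_{h}(\alpha,\beta)$.

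The second step is to show that for $0\le I\le n-4$,
\[
\mathcal{V}_I(\alpha,\beta)=\sum_{\substack{\lambda\vdash n\\ \lambda_1\le n-I-1}}\hat\alpha_\lambda\overline{\hat\beta_\lambda}+O_{n,\mathrm{max}(\alpha;n),\mathrm{max}(\beta;n)}\big(q^{-1/2}\big).
\]
For $I\ge 1$ this is the covariance of a factorization function in short intervals: I would use polarization to reduce to the variance, then combine Rodgers's translation \cite{rodgers2018} of short-interval sums into sums over Dirichlet characters ramified at infinity, and hence into $L$-functions, with the large-$q$ equidistribution for those $L$-functions provided by Theorem~\ref{thmequigenarith} and Appendix~\ref{app:secequi}; the $q^{-1/2}$ error comes from Deligne's bound on the relevant cohomology. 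For $I=0$ (needed only when $h=0$) one has $\mathcal{V}_0(\alpha,\beta)=\EE_{\Mnq}\big((\alpha\overline\beta)_q\big)-\mu_\alpha\overline{\mu_\beta}$; since $\alpha_q\overline{\beta_q}$ is again a factorization function and the factorization type of a random squarefree monic polynomial of degree $n$ equidistributes over $S_n^{\#}$ with error $O(q^{-1/2})$, Parseval gives $\EE_{\Mnq}\big((\alpha\overline\beta)_q\big)=\sum_{\lambda\vdash n}\hat\alpha_\lambda\overline{\hat\beta_\lambda}+O(q^{-1/2})$ and $\mu_\alpha\overline{\mu_\beta}=\hat\alpha_{(n)}\overline{\hat\beta_{(n)}}+O(q^{-1/2})$, which is the displayed formula for $I=0$ (where $\lambda_1\le n-1$ just excludes $\lambda=(n)$).

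Finally, since $\{\lambda\vdash n:\lambda_1\le n-h-2\}\subseteq\{\lambda\vdash n:\lambda_1\le n-h-1\}$ with difference exactly $\{\lambda\vdash n:\lambda_1=n-h-1\}$, subtracting the instances $I=h+1$ and $I=h$ of the formula above yields the theorem, with error $O_{n,\mathrm{max}(\alpha;n),\mathrm{max}(\beta;n)}(q^{-1/2})$; the hypotheses $n\ge 5$ and $0\le h\le n-5$ are exactly what makes both $I=h$ and $I=h+1$ lie in the range $\le n-4$ where the short-interval equidistribution is available. The telescoping reduction of the first step and the bookkeeping of this last step are elementary; the main obstacle is the middle step, i.e.\ establishing the short-interval covariance formula for an \emph{arbitrary} pair of factorization functions with the main term written through $\hat\alpha_\lambda$, $\hat\beta_\lambda$, and in particular verifying that the cutoff is precisely $\lambda_1\le n-I-1$ (so that the telescoping isolates the single layer $\lambda_1=n-h-1$). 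This is where Rodgers's combinatorics---reading off which Fourier modes survive from the degrees of the $L$-function factors attached to the short-interval characters---must be pushed through for general $\alpha$, and where the appendix's equidistribution results are needed to make the large-$q$ limit rigorous with the stated error.
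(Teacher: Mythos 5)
Your proposal is correct and follows essentially the same route as the paper. Your step one (the exact telescoping identity $\sum_{\Delta\in\mathcal{A}_{h,q}}\mathrm{Cov}_{\mathcal{M}_q}(\alpha_q,\beta_q;n,\Delta)=\mathcal{V}_{h+1}-\mathcal{V}_{h}$) is exactly the $\Delta=1$ case of the second part of Proposition~\ref{twoidengap}, which you re-derive directly rather than cite, and your $\mathcal{V}_I$ equals $q^{-I}\,\mathrm{Cov}_{\mathcal{M}_q}(\alpha_q,\beta_q;n,1,n-I)$; your step two (the short-interval covariance formula with cutoff $\lambda_1\le n-I-1$ and error $O(q^{-1/2})$) is precisely Theorem~\ref{thmcovgap} applied with $\Delta=1$, which the paper obtains via Proposition~\ref{avchiviaschur} (Rodgers's combinatorics plus Theorem~\ref{thmequigenarith}), the same ingredients you name; and your final bookkeeping matches the paper's. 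The only cosmetic differences are that you mention polarizing to the variance (the paper handles the bilinear quantity directly by linearity in $\alpha$ and conjugate-linearity in $\beta$) and that you treat $I=0$ by a separate elementary argument, which is fine but unnecessary since Theorem~\ref{thmcovgap} already covers $h=0$ when $n\ge 5$.
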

Several special cases of Theorem~\ref{corolvar} were proved in \cite{keating2016}, namely $\alpha_q=\beta_q=\Lambda_q$, $\alpha_q=\beta_q=d_{2,q}$ and $\alpha_q =\beta_q=\mu_q$:
\begin{equation*}
\begin{split}
\sum_{\Delta \in \mathcal{A}_{h,q}}  \mathrm{Cov}_{\mathcal{M}_q}(\Lambda_q,\Lambda_q;n,\Delta) &= -1+O_n\Big(\frac{1}{\sqrt{q}}\Big),\\
\sum_{\Delta \in \mathcal{A}_{h,q}}  \mathrm{Cov}_{\mathcal{M}_q}(d_{2,q},d_{2,q};n,\Delta) &= -(n-2h-1)^2 \cdot \indic_{h \le \frac{n}{2}-1} + O_n \Big(\frac{1}{\sqrt{q}}\Big),\\
\sum_{\Delta \in \mathcal{A}_{h,q}}  \mathrm{Cov}_{\mathcal{M}_q}(\mu_q,\mu_q;n,\Delta) &= O_n\Big( \frac{1}{\sqrt{q}} \Big).
\end{split}
\end{equation*}

\subsection{Discussion}
It is natural to ask what kind of technical improvements are needed in order to obtain additional lower order terms, or better error terms, in our main results. We can obtain, without much effort, additional lower order terms in most of the lemmas and propositions appearing in the proofs of Theorems~\ref{maintermcovthm} and \ref{thmmn}. The two exceptions are the following. Firstly, we do not know how to obtain lower order terms in \eqref{eq:indep}, which is proved using a Chebotarev Density Theorem for function fields. Secondly, we do not know how to obtain lower order terms in the equidistribution results of Appendix~\ref{app:secequi}, which are proved using Deligne's equidistribution theorem. Once one is able to obtain lower order terms in these estimates, one additional change is needed. Instead of working in the general setting of factorization functions, we should restrict to the class of \emph{arithmetic functions of von Mangoldt type}, introduced by Hast and Matei \cite[Def.~4.3]{hast2018}. This class is still quite general -- any factorization function can be written as a sum of a function of von Mangoldt type and a function supported on non-squarefree polynomials \cite[Prop.~4.5]{hast2018}. In addition, the von Mangoldt function, the M\"{o}bius function and the divisor functions are all of von Mangoldt type. These functions have the advantage that there is no error term in \eqref{mtsumgen} for most characters $\chi$, which allows us to make use of lower order terms in the equidistribution results.

\section{Preliminaries}
\subsection{Hayes characters}
Here we review the function field analogue of Dirichlet characters, first introduced by Hayes in the paper \cite{hayes1965} which is based on his thesis. We call these characters ``Hayes characters", or sometimes ``generalized arithmetic progression characters". Unless otherwise stated, the proofs of the statements in this section appear in Hayes' original paper. The main difference between Hayes characters and Dirichlet characters is that in the function field setting we can also consider characters modulo the prime at infinity.
\subsubsection{Equivalence relation}\label{erhayes}
Let $\ell$ be a non-negative integer and $M \in \mathcal{A}_q$. We define an equivalence relation $R_{\ell,M}$ on $\mathcal{M}_q$ by saying that $A \equiv B \bmod R_{\ell,M}$ if and only if $A$ and $B$ have the same first $\ell$ next-to-leading coefficients and $A \equiv B \bmod M$. We adopt throughout the following convention: The $j$-th next-to-leading coefficient of a polynomial $f(T) \in \mathcal{M}_q$ with $j > \deg f$ is considered to be $0$. 
It may be shown that there is a well-defined quotient monoid $\mathcal{M}_q/R_{\ell,M}$, where multiplication is the usual polynomial multiplication. An element of $\mathcal{M}_q$ is invertible modulo $R_{\ell,M}$ if and only if it is coprime to $M$. The units of $\mathcal{M}_q/R_{\ell,M}$ form an abelian group, having as identity element the equivalence class of the polynomial $1$. We denote this unit group by $\left( \mathcal{M}_q / R_{\ell,M}\right)^{\times}$. We note that $\left( \mathcal{M}_q / R_{\ell,M}\right)^{\times}$ is isomorphic to
\begin{equation}\label{structunit}
(1+T\FF_q[T])/(1+T^{\ell+1}\FF_q[T]) \oplus (\FF_q[T] / M \FF_q[T] )^{\times},
\end{equation}
and its size is given by
\begin{equation*}
\left| \left(\mathcal{M}_q / R_{\ell,M}\right)^{\times} \right| = q^{\ell} \phi(M).
\end{equation*}
We note that the relation $R_{\ell,M}$ depends only on $\ell$ and the ideal generated by $M$, that is, $R_{\ell,c \cdot M}$ yields the same relation for any $c \in \FF_q^{\times}$.
\subsubsection{Representative sets}
A set of polynomials in $\mathcal{M}_{q}$ is called a representative set modulo $R_{\ell,M}$ if the set contains one and only one polynomial from each equivalence class of $R_{\ell,M}$. The set $\{ f \in \mathcal{M}_{\ell+\deg M,q} : \gcd(f,M)=1 \}$ is a representative set modulo $R_{\ell,M}$. More generally, if $n \ge \ell+\deg M$, then $\{ f \in \Mnq : \gcd(f,M)=1 \}$ is a disjoint union of $q^{n-\ell-\deg M}$ representative sets modulo $R_{\ell,M}$.
\subsubsection{Characters}\label{secchars}
For every character $\chi$ of the finite abelian group $\left( \mathcal{M}_q / R_{\ell,M}\right)^{\times}$, we define $\chi^{\dagger}$ with domain $\mathcal{M}_q$ as follows: If $A$ is invertible modulo $R_{\ell,M}$ and if $\mathfrak{c}$ is the equivalence class of $A$, then $\chi^{\dagger}(A)=\chi(\mathfrak{c})$; If $A$ is not invertible, then $	\chi^{\dagger}(A)=0$.

The set of functions $\chi^{\dagger}$ defined in this way are called the characters of the relation $R_{\ell,M}$, or sometimes ``characters modulo $R_{\ell,M}$". We shall for notational reasons abuse language somewhat and write $\chi$ instead of $\chi^{\dagger}$ to indicate a character of the relation $R_{\ell,M}$ derived from the character $\chi$ of the group $\left( \mathcal{M}_q / R_{\ell,M}\right)^{\times}$. Thus we write $\chi_0$ for the character of $R_{\ell,M}$ which has the value $1$ when $A$ is invertible and the value $0$ otherwise. We denote by $G(R_{\ell,M})$ the set $\{ \chi^{\dagger} : \chi \in \widehat{\left( \mathcal{M}_q / R_{\ell,M}\right)^{\times}} \}$. If $\chi_{1},\chi_{2} \in G(R_{\ell,M})$, then
\begin{equation}\label{ortho1}
\frac{1}{q^{\ell}\phi(M)} \sum_{F} \chi_1(F) \overline{\chi_2}(F) = \begin{cases} 0 & \text{if }\chi_1 \neq \chi_2 ,\\ 1 & \text{if }\chi_1 = \chi_2 ,\end{cases}
\end{equation}
$F$ running through a representative set modulo $R_{\ell,M}$. In particular, if $n \ge \ell + \deg M$ and $\chi_2 = \chi_0$, we have
\begin{equation}\label{orthouse}
\frac{1}{q^{n-\deg M}\phi(M)} \sum_{F \in \Mnq} \chi(F) = \begin{cases} 0 & \text{if }\chi \neq \chi_0 ,\\ 1 & \text{if }\chi = \chi_0. \end{cases}
\end{equation}
We have also
\begin{equation}\label{ortho2}
\frac{1}{q^{\ell}\phi(M)}\sum_{\chi \in G(R_{\ell,M})} \chi(A)\overline{\chi}(B) = \begin{cases} 1 & \text{if }A\equiv B \bmod R_{\ell,M}, \\ 0 & \text{otherwise}. \end{cases}
\end{equation}
We also call the elements of $G(R_{\ell,M})$ ``generalized arithmetic progression characters", because for any $n \ge \ell$ and $A \in \Mnq$, $\chi \in G(R_{\ell,M})$ is constant on the set
\begin{equation*}
\{ f \in \Mnq : f  \equiv A \bmod R_{\ell,M}\} = \{ f\in \FF_q[T] : f \equiv A \bmod M \} \cap \{ f \in \Mnq : \deg(f-A) < n-\ell \}
\end{equation*}
which is an intersection of an arithmetic progression and a short interval. We set, for future use,
\begin{equation*}
\GAP(n,A;R_{\ell,M}) = \{ f \in \Mnq : f  \equiv A \bmod R_{\ell,M}\}.
\end{equation*}
A character $\chi$ modulo $R_{\ell,M}$ is said to be ``primitive modulo $R_{\ell,M}$", or just ``primitive", if $\chi \notin G(R_{\ell -1,M})$ and if for any proper divisor $Q \mid M$, $\chi$ is not of the form $\chi_0$ times a character from $G(R_{\ell,Q})$.
The number of non-primitive characters in $G(R_{\ell,M})$ is bounded from above by
\begin{equation*}
\sum_{P \mid M,\, P \in \mathcal{P}_q} |G(R_{\ell,M/P})| + \indic_{\ell>0} \cdot |G(R_{\ell-1,M})| \le |G(R_{\ell,M})| \cdot (\sum_{P \mid M} \frac{1}{\phi(P)} + \frac{1}{q}) = O_{\deg M}(\frac{|G(R_{\ell,M})|}{q}).
\end{equation*}
If $\ell=0$ and $\deg M > 0$, we can identify $\FF_q^{\times}$ naturally with a subgroup of $\left(\mathcal{M}_q / R_{\ell,M}\right)^{\times} \cong \left( \FF_q[T]/ M \right)^{\times}$. A character $\chi$ modulo $R_{\ell,M}$ is said to be ``even" if $\chi$ is trivial on $\FF_q^{\times}$, and ``odd" otherwise. When either $\ell>0$ or $\deg M = 0$, we consider all characters modulo $R_{\ell,M}$ to be odd. Thus, the number of even characters in $G(R_{\ell,M})$ is $0$ if $\deg M=0$ or $\ell >0$, and is $\frac{|G(R_{\ell,M})|}{q-1}$ otherwise. In particular,
\begin{equation}\label{eq:number of chars}
\# \{ \chi \in G(R_{\ell,M}) :  \chi \text{ primitive and odd}\} = |G(R_{\ell,M})| (1 + O_{\ell,\deg M}(\frac{1}{q})).
\end{equation}

\subsubsection{Structure of \texorpdfstring{$G(R_{\ell,M})$}{G(Rl,M)}}
Any character $\chi \in G(R_{\ell,M})$ is of the form $\chi_1 \cdot \chi_2$ where $\chi_1 \in G(R_{\ell,1})$ and $\chi_2 \in G(R_{0,M})$. This follows, for instance, from counting considerations. Characters modulo $R_{\ell,1}$ are called ``short interval characters" and characters modulo $R_{0,M}$ are called ``(usual) Dirichlet characters".
\subsubsection{$L$-functions}\label{seclfunc}
Let $\chi \in G(R_{\ell,M})$. The $L$-function of $\chi$ is the following series in $u$:
\begin{equation*}
L(u,\chi) = \sum_{f \in \mathcal{M}_q} \chi(f)u^{\deg f},
\end{equation*}
which also admits the Euler product
\begin{equation}\label{eulerlchi}
L(u,\chi) = \prod_{P \in \mathcal{P}_q} (1-\chi(P)u^{\deg P})^{-1}.
\end{equation}
If $\chi$ is the trivial character $\chi_0$ of $G(R_{\ell,M})$, then
\begin{equation*}
L(u,\chi) = \frac{\prod_{P \mid M} (1-u^{\deg P})}{1-qu}.
\end{equation*}
Otherwise, the orthogonality relation \eqref{orthouse} implies that $L(u,\chi)$ is a polynomial in $u$ of degree at most $\ell + \deg M-1$.

The first one to realize that Weil's proof of the Riemann Hypothesis for Function Fields \cite[Thm.~6,~p.~134]{weil1974} implies the Riemann Hypothesis for the $L$-functions of $\chi \in G(R_{\ell,M})$ was Rhin \cite[Thm.~3]{rhin1972} in his thesis (cf. \cite[Thm.~5.6]{effinger1991} and the discussion following it). Hence we know that if we let $a(\chi)$ count the multiplicity of the root $u=1$ in $L(u,\chi)$, and factor $L(u,\chi)$ as
\begin{equation}\label{ldecomproot}
L(u,\chi) = (1-u)^{a(\chi)}\prod_{i=1}^{\deg L(u,\chi) - a(\chi)} (1-\gamma_i(\chi)u),
\end{equation}
then the $\gamma_i(\chi)$-s are $q$-Weil numbers of weight 1, i.e. they are algebraic numbers such that
\begin{equation}\label{absweight1}
\left| \gamma_i(\chi) \right| = \sqrt{q},
\end{equation}
for all $i$, and \eqref{absweight1} is true for the conjugates of $\gamma_i(\chi)$ as well. It is known (for instance, by the functional equation) that if $\chi$ is primitive modulo $R_{\ell,M}$ then 
\begin{equation*}
\deg L(u,\chi)  = \ell+\deg M -1.
\end{equation*}
If $\chi$ is not trivial, we denote by $\Theta_{\chi}$ the conjugacy class of the matrix $\mathrm{diag}(\frac{\gamma_1(\chi)}{\sqrt{q}},\ldots,\frac{\gamma_{\deg L(u,\chi)-a(\chi)}(\chi)}{\sqrt{q}})$ in the unitary group $\mathrm{U}(\deg L(u,\chi)  -a(\chi))$. We sometimes abuse notation and think of $\Theta_{\chi}$ as a specific matrix. Thus,
\begin{equation*}
L(u,\chi) = (1-u)^{a(\chi)} \det(I-u\sqrt{q}\Theta_{\chi}).
\end{equation*}
Taking the logarithmic derivatives of \eqref{eulerlchi} and \eqref{ldecomproot} and comparing coefficients, we obtain
\begin{equation}\label{vonmangiden}
\sum_{f \in \Mnq} \Lambda_q(f) \chi(f)  =- \mathrm{Tr}(\Theta_{\chi}^n) q^{\frac{n}{2}} - a(\chi),
\end{equation}
for all $\chi_0 \neq \chi \in G(R_{\ell,M})$, from which the bound
\begin{equation}\label{vonmangbound}
\Big| \sum_{f \in \Mnq} \Lambda_q(f) \chi(f) \Big| \le (\ell+\deg M - 1) q^{\frac{n}{2}}
\end{equation}
for all $\chi_0 \neq \chi \in G(R_{\ell,M})$ follows. If $\chi$ is odd and primitive then $a(\chi)=0$ and $\Theta_{\chi} \in \mathrm{U}(\ell+\deg M -1)$. 
\subsection{Relations between \texorpdfstring{$\mathrm{Cov}_{\mathcal{A}_q}(\alpha,\beta;n,\Delta)$ and  $\mathrm{Cov}_{\mathcal{M}_q}(\alpha,\beta;n,\Delta)$}{CovAq(alpha,beta;n,Delta) and CovMq(alpha,beta;n,Delta)}}
\begin{lemma}\label{monicnonmoniclem}
	Let $\alpha, \beta \colon \FF_q[T] \to \mathbb{C}$ be two even arithmetic functions. Let $n$ be a positive integer. Let $\Delta \in \FF_q[T]$ be a non-zero polynomial of degree $<n$.
	\begin{enumerate}
		\item We have
		\begin{equation}\label{monicnonmonicrel}
		\mathrm{Cov}_{\mathcal{A}_q}(\alpha,\beta;n,\Delta) = \frac{\sum_{c \in \FF_q^{\times}} \mathrm{Cov}_{\mathcal{M}_q}(\alpha,\beta;n,c\cdot \Delta) }{q-1}.
		\end{equation}
		\item Let $c_1 \in \FF_q^{\times}$, $c_2 \in \FF_q$. Then
		\begin{equation}\label{nopm1res}
		\mathrm{Cov}_{\mathcal{M}_q}(\alpha,\beta;n,\Delta(T)) =  \mathrm{Cov}_{\mathcal{M}_q}(\alpha,\beta;n,\frac{\Delta(c_1 T + c_2)}{c_1^n} ).
		\end{equation}
		If $\alpha = \overline{\beta}$, we also have
		\begin{equation}\label{pm1res}
		\mathrm{Cov}_{\mathcal{M}_q}(\alpha,\beta;n,\Delta(T)) =  \mathrm{Cov}_{\mathcal{M}_q}(\alpha,\beta;n,-\frac{\Delta(c_1 T + c_2)}{c_1^n} ).
		\end{equation}
		\item Let $\Delta=a(T+b)^k$ ($a \in \FF_q^{\times},b \in \FF_q, 0\le k<n$). If $\gcd(n-k,q-1)=1$ then
		\begin{equation*}
		\mathrm{Cov}_{\mathcal{A}_q}(\alpha,\beta;n,\Delta) =  \mathrm{Cov}_{\mathcal{M}_q}(\alpha,\beta;n,\Delta) .
		\end{equation*}
		The same conclusion holds if the following conditions hold simultaneously: $\mathrm{char}(\FF_q) > 2, \alpha = \overline{\beta}, \gcd(n-k, \frac{q-1}{2})=1$.
	\end{enumerate}
\end{lemma}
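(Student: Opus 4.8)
The plan is to prove the three parts by exploiting the two natural group actions on $\FF_q[T]$ in degree $n$: scaling the leading coefficient, and affine substitution $T \mapsto c_1 T + c_2$.

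\emph{Part (1).} The key observation is that every $f \in \Anq$ can be written uniquely as $c \cdot g$ with $c \in \FF_q^\times$ the leading coefficient and $g \in \Mnq$. Since $\alpha$ and $\beta$ are even, $\alpha(f) = \alpha(g)$ and $\beta(f + \Delta) = \beta(c g + \Delta) = \beta(g + \Delta/c) = \beta(g + c^{-1}\Delta)$. Thus, as $c$ ranges over $\FF_q^\times$ and $g$ over $\Mnq$, the pair $(\alpha(f), \beta(f+\Delta))$ over $f \in \Anq$ is exactly the disjoint union over $c \in \FF_q^\times$ of the pairs $(\alpha(g), \beta(g + c^{-1}\Delta))$ over $g \in \Mnq$. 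Since $|\Anq| = (q-1)|\Mnq|$, the mean $\EE_{\Anq}$ of any function of this pair equals the average over $c$ of the means $\EE_{\Mnq}$, and reindexing $c \mapsto c^{-1}$ leaves the sum over $\FF_q^\times$ unchanged. Applying this to $\alpha\overline\beta$, to $\alpha$, and to $\overline\beta$ separately and assembling via the definition of $\mathrm{Cov}$, one gets $\mathrm{Cov}_{\mathcal{A}_q}(\alpha,\beta;n,\Delta)$ as the average over $c$ of $\mathrm{Cov}_{\mathcal{M}_q}(\alpha,\beta;n,c\Delta)$; here one uses that $\EE_{\Anq}\alpha = \EE_{\Mnq}\alpha$ (because $\alpha$ is even, this mean does not depend on $c$) and likewise for $\overline\beta$, so the product-of-means term also averages correctly. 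This gives \eqref{monicnonmonicrel}.

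\emph{Part (2).} Here the relevant symmetry is the substitution $\sigma_{c_1,c_2} \colon f(T) \mapsto f(c_1 T + c_2)$ for $c_1 \in \FF_q^\times$, $c_2 \in \FF_q$, which is a ring automorphism of $\FF_q[T]$ preserving degree, mapping primes to primes, and hence preserving the factorization type $\lambda_f$; since $\alpha,\beta$ are factorization functions (more precisely, even arithmetic functions satisfying the needed invariance — but we apply it when they are factorization functions), $\alpha(f(c_1 T + c_2)) = \alpha(f)$. However $f(c_1 T + c_2)$ is not monic: its leading coefficient is $c_1^n$. So composing with the leading-coefficient normalization, $g \mapsto c_1^{-n} g(c_1 T + c_2)$ is a degree-preserving bijection $\Mnq \to \Mnq$ under which the shift transforms as $\Delta(T) \mapsto c_1^{-n}\Delta(c_1 T + c_2)$. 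Tracking $(\alpha(g), \beta(g+\Delta))$ through this bijection (again using evenness of $\beta$ to absorb the $c_1^{-n}$ into the argument) yields \eqref{nopm1res}. For \eqref{pm1res}, one additionally uses the automorphism fixing $T$ but this time the identity $\mathrm{Cov}_S(\alpha,\beta) = \overline{\mathrm{Cov}_S(\beta,\alpha)}$ together with $\alpha = \overline\beta$ and the substitution $g \mapsto -g$ (valid since $\mathrm{char} \ne 2$ is not even needed: $-1 \in \FF_q^\times$ always): replacing $g$ by $-g$ sends $\beta(g+\Delta)$ to $\beta(-g+\Delta) = \beta(g - \Delta)$ after using evenness, and combined with swapping $\alpha \leftrightarrow \overline\beta$ this produces the sign $-\Delta$; one then composes with the affine substitution as before.

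\emph{Part (3).} This is a direct combination of (1) and (2). By \eqref{monicnonmonicrel}, $\mathrm{Cov}_{\mathcal{A}_q}(\alpha,\beta;n,\Delta) = \frac{1}{q-1}\sum_{c \in \FF_q^\times}\mathrm{Cov}_{\mathcal{M}_q}(\alpha,\beta;n,c\Delta)$. For $\Delta = a(T+b)^k$, choosing $c_1 \in \FF_q^\times$ and $c_2 = 0$ in \eqref{nopm1res} gives $\mathrm{Cov}_{\mathcal{M}_q}(\alpha,\beta;n,a(T+b)^k) = \mathrm{Cov}_{\mathcal{M}_q}(\alpha,\beta;n, c_1^{-n} a (c_1 T + b)^k) = \mathrm{Cov}_{\mathcal{M}_q}(\alpha,\beta;n, c_1^{k-n} a (T + c_1^{-1}b)^k)$, so the shift $c\Delta$ for $c = c_1^{k-n}$ gives the same covariance as $\Delta$ itself. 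The hypothesis $\gcd(n-k,q-1)=1$ ensures that $c_1 \mapsto c_1^{k-n} = c_1^{-(n-k)}$ is a bijection of $\FF_q^\times$, so as $c_1$ ranges over $\FF_q^\times$ the value $c = c_1^{k-n}$ also ranges over all of $\FF_q^\times$; hence $\mathrm{Cov}_{\mathcal{M}_q}(\alpha,\beta;n,c\Delta)$ is independent of $c \in \FF_q^\times$ and equals $\mathrm{Cov}_{\mathcal{M}_q}(\alpha,\beta;n,\Delta)$, and the average over $c$ collapses. When instead $\mathrm{char}(\FF_q) > 2$, $\alpha = \overline\beta$, and $\gcd(n-k,\tfrac{q-1}{2})=1$, one uses \eqref{pm1res} as well: the map $c_1 \mapsto \pm c_1^{k-n}$ then already covers a subgroup of index dividing $\gcd(n-k,\tfrac{q-1}{2})$ which is trivial by hypothesis, hence all of $\FF_q^\times$, giving the same collapse. (The squares $(\FF_q^\times)^2$ have index $2$ when $\mathrm{char}\ne 2$, and $c_1^{k-n}$ ranges over a subgroup whose index divides $\gcd(n-k,q-1)$; allowing the extra sign halves this, so $\gcd(n-k,\tfrac{q-1}{2})=1$ suffices.)

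\emph{Main obstacle.} The bookkeeping steps are all routine; the one point requiring genuine care is the covariance identity in Part (1): one must verify that the cross term $\EE_{\Mnq}\alpha \cdot \overline{\EE_{\Mnq}\beta}$ matches after averaging over $c$, which works precisely because evenness forces $\EE_{\Mnq}$ of each of $\alpha$ and $\overline\beta$ to be unchanged under the shift $\Delta \mapsto c\Delta$ (indeed these means do not involve $\Delta$ at all). Keeping the conjugations straight in \eqref{pm1res} — specifically getting the sign on $\Delta$ rather than on the covariance — is the other place where an error could creep in, and is the reason that identity needs $\alpha = \overline\beta$.
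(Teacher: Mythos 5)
Your Part (1) is correct and matches the paper's proof (decompose $\Anq$ by leading coefficient, use evenness, reindex $c \leftrightarrow c^{-1}$); the paper goes from monic to non-monic while you go the other way, but it is the same argument. You also rightly flag that Part (2) needs invariance of $\alpha,\beta$ under $T\mapsto c_1T+c_2$, i.e.\ more than mere evenness; the paper uses the same permutation argument and silently assumes the same thing.

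There are two places where your write-up has a genuine problem.

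First, in your derivation of \eqref{pm1res}, the substitution ``$g\mapsto -g$'' does not act on $\Mnq$: if $g$ is monic then $-g$ is not, so this is not a reindexing of the sum. If you then try to repair it with evenness, the manipulation becomes circular (you end up proving the original identity back again). The correct step, which is what the paper does, is to use $\alpha=\overline\beta$ to write $\alpha(f)\overline\beta(f+\Delta')=\alpha(f+\Delta')\overline\beta(f)$ and then shift the summation variable $f\mapsto f-\Delta'$ (a genuine permutation of $\Mnq$, since $\deg\Delta'<n$), which is where the sign on $\Delta'$ comes from. No $-g$ and no $\mathrm{Cov}_S(\alpha,\beta)=\overline{\mathrm{Cov}_S(\beta,\alpha)}$ is needed.

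Second, and more seriously, Part (3) as you wrote it has a gap. With $c_2=0$ the affine substitution sends $\Delta = a(T+b)^k$ to $c_1^{k-n}a(T+c_1^{-1}b)^k$, which is \emph{not} a scalar multiple of $\Delta$ when $b\neq 0$; in particular it is not $c\Delta$ for $c=c_1^{k-n}$, so the conclusion ``$\mathrm{Cov}_{\mathcal{M}_q}(\alpha,\beta;n,c\Delta)$ is independent of $c$'' does not follow from what you have written. You need to choose $c_2$ so that the center $-b$ is a fixed point of $T\mapsto c_1T+c_2$, i.e.\ $c_2 = b(c_1-1)$, which is exactly what the paper does; then $\Delta(c_1T+c_2)=ac_1^k(T+b)^k$ and the shift becomes $c_1^{k-n}\Delta$, a scalar multiple. (Alternatively, you could compose with one more translation via \eqref{nopm1res}, but you must say so.) The same correction is needed in the second case with the extra $\pm$ sign. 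Your group-theoretic reasoning about $\gcd(n-k,q-1)=1$, and $\gcd(n-k,(q-1)/2)=1$ with the extra sign, is fine once this is fixed.
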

\begin{proof}
	\begin{enumerate}
		\item We rewrite the right hand side of \eqref{monicnonmonicrel} as follows:
		\begin{equation*}
		\begin{split}
		\frac{\sum_{c \in \FF_q^{\times}} \mathrm{Cov}_{\mathcal{M}_q}(\alpha,\beta;n,c\cdot \Delta) }{q-1} &= \frac{\sum_{c \in \FF_q^{\times}} \sum_{f \in \Mnq} \alpha(f) \overline{\beta}(f+c\cdot \Delta)  }{(q-1)q^n} - \frac{\sum_{c \in \FF_q^{\times}} \EE_{\Mnq} \alpha \cdot  \EE_{\Mnq} \overline{\beta} }{q-1} \\
		&= \frac{\sum_{c \in \FF_q^{\times}} \sum_{f \in \Mnq} \alpha(\frac{f}{c}) \overline{\beta}(\frac{f}{c}+\Delta)  }{(q-1)q^n} - \EE_{\Mnq} \alpha \cdot  \EE_{\Mnq} \overline{\beta} \\
		&= \frac{\sum_{g \in \Anq} \alpha(g) \overline{\beta}(g+\Delta)  }{(q-1)q^n} - \EE_{\Anq} \alpha \cdot  \EE_{\Anq} \overline{\beta}\\
		&= \mathrm{Cov}_{\mathcal{A}_q}(\alpha,\beta;n,\Delta).
		\end{split}
		\end{equation*}
		\item To show that \eqref{nopm1res} holds it suffices to show that
		\begin{equation}\label{permapp}
		\sum_{f \in \Mnq} \alpha(f) \overline{\beta}(f+\Delta) = \sum_{f \in \Mnq} \alpha(f) \overline{\beta}(f+\frac{\Delta(c_1 T +c_2)}{c_1^n}).
		\end{equation}
		Note that $g(T) \mapsto g(c_1 T+c_2)/(c_1^n)$ is a permutation of $\Mnq$ (indeed, its inverse is given by $g(T) \mapsto c_1^n g(\frac{T-c_2}{c_1})$). Applying this permutation to the left hand side of \eqref{permapp}, we get the right hand side of \eqref{permapp}. If $\alpha = \overline{\beta}$, we also have
		\begin{equation*}
		\begin{split}
		\sum_{f \in \Mnq} \alpha(f) \overline{\beta}(f+\frac{\Delta(c_1 T +c_2)}{c_1^n})&= \sum_{f \in \Mnq} \alpha(f+\frac{\Delta(c_1 T +c_2)}{c_1^n}) \overline{\beta}(f)\\
		&= \sum_{f \in \Mnq} \alpha(f) \overline{\beta}(f-\frac{\Delta(c_1 T +c_2)}{c_1^n}),
		\end{split}
		\end{equation*}
		which establishes \eqref{pm1res}.
		\item First assume that $\gcd(n-k,q-1)=1$, a condition which ensures that $c \mapsto c^{n-k}$ is a permutation on $\FF_q^{\times}$. From the two previous parts of the lemma, it suffices to show that for any $c \in \FF_q^{\times}$ we have $c_1 \in \FF_q^{\times}$ and $c_2 \in \FF_q$ such that
		\begin{equation*}
		a(T+b)^k = \frac{c \cdot (a(c_1 T + c_2+b)^k)}{c_1^n}.
		\end{equation*}
		We may take $c_1$ such that
		\begin{equation*}
		c_1^{n-k} = c
		\end{equation*}
		and
		\begin{equation*}
		c_2 = b(c_1-1).
		\end{equation*}
		We now assume instead that $\mathrm{char}(\FF_q) > 2$, $\alpha = \overline{\beta}$ and $\gcd(n-k, \frac{q-1}{2})=1$. The condition $\gcd(n-k, \frac{q-1}{2})=1$ ensures that the subgroup of $\FF_q^{\times}$ generated by $\{ -1, g^{n-k} \}$ (where $g$ is a generator of $\FF_q^{\times}$) is all of $\FF_q^{\times}$. Indeed, $-1=g^{\frac{q-1}{2}}$ and so the subgroup is in fact generated by $g^{\gcd(n-k, \frac{q-1}{2})}$, which is itself a generator if and only if $\gcd(n-k, \frac{q-1}{2})=1$.
		
		From the two previous parts of the lemma, it suffices to show that for any $c \in \FF_q^{\times}$ we have $c_1 \in \FF_q^{\times}$, $c_2 \in \FF_q$ and $\varepsilon \in \{ \pm 1\}$ such that
		\begin{equation*}
		a(T+b)^k  = \varepsilon\frac{c \cdot (a(c_1 T + c_2+b)^k)}{c_1^n}.
		\end{equation*}
		We may take $c_1$ and $\varepsilon$ such that
		\begin{equation*}
		c_1^{n-k} = \varepsilon c
		\end{equation*}
		and
		\begin{equation*}
		c_2 = b(c_1-1).
		\end{equation*}
		
	\end{enumerate}
\end{proof}
\subsection{Some Fourier expansions}
The Fourier expansions of various arithmetic functions were calculated by Rodgers \cite[\S9]{rodgers2018}. 

\begin{proposition}[Rodgers]\label{rodgersexp}
	Let $\mu, \Lambda, d_k \colon \mathrm{EFT} \to \mathbb{C}$ be the functions with which $\mu_q, \Lambda_q, d_{k,q}$ are associated, respectively. Let $n \ge 2$ be an integer and assume that $k \ge 2$.
	\begin{enumerate}
		\item  The Fourier coefficients of $\mu |_{S_n^{\#}}$, defined in \eqref{deffourier}, are given by
		\begin{equation*}
		\hat{\mu}_{\lambda} = \begin{cases}  (-1)^n & \lambda=(1^n), \\ 0 & \text{otherwise.} \end{cases}
		\end{equation*}
		\item The Fourier coefficients of $\Lambda |_{S_n^{\#}}$, defined in \eqref{deffourier}, are given by
		\begin{equation*}
		\hat{\Lambda}_{\lambda} = \begin{cases}  (-1)^{n-r} & \lambda=(r,1^{n-r}) \text{ for some }1 \le r \le n, \\ 0 & \text{otherwise.} \end{cases}
		\end{equation*}
		\item The Fourier coefficients of $d_{k} |_{S_n^{\#}}$, defined in \eqref{deffourier}, are given by
		\begin{equation*}
		(\hat{d_k})_{\lambda} = \begin{cases}  s_{\lambda}(\underbrace{1,\ldots,1}_{k}) & \ell(\lambda) \le k, \\ 0 & \text{otherwise,} \end{cases}
		\end{equation*}
		where $s_{\lambda}$ is the usual Schur function and $\ell(\lambda)$ is the number of parts in $\lambda$. Moreover, if  $\lambda=(\lambda_1,\ldots,\lambda_r)$ with $r \le k$, then
		\begin{equation*}
		s_{\lambda}(\underbrace{1,\ldots,1}_{k}) = \prod_{1 \le i <j \le k} \frac{\lambda_i-\lambda_j+j-i}{j-i},
		\end{equation*}
		with the convention that $\lambda_{r+1}=\lambda_{r+2}=\ldots=\lambda_k=0$.
	\end{enumerate}
\end{proposition}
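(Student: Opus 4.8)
The plan is to identify, in each case, the restricted class function $\alpha|_{S_n^{\#}}$ on $S_n$ explicitly, and then extract its irreducible-character coefficients using standard symmetric-function identities. Recall that under the embedding $S_n^{\#}\hookrightarrow\mathrm{EFT}$ a partition $\rho=(\rho_1,\ldots,\rho_m)\vdash n$ is sent to the extended factorization type $\{(\rho_i,1):1\le i\le m\}$, i.e.\ to that of a squarefree polynomial which is a product of $m$ distinct irreducibles of degrees $\rho_1,\ldots,\rho_m$. Reading off the values from the definitions of $\mu_q$, $\Lambda_q$ and $d_{k,q}$ (the last being multiplicative with $d_{k,q}(P)=k$ for every irreducible $P$) one gets: $\mu|_{S_n^{\#}}(\rho)=(-1)^{\ell(\rho)}$ with $\ell(\rho)=m$; $\Lambda|_{S_n^{\#}}(\rho)=n$ if $\rho=(n)$ and $0$ otherwise, since a product of distinct irreducibles is a prime power only when it is a single irreducible; and $d_k|_{S_n^{\#}}(\rho)=k^{\ell(\rho)}$.

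For $\mu$, I would use the fact that a permutation of cycle type $\rho$ has sign $(-1)^{n-\ell(\rho)}$, so that $\mu|_{S_n^{\#}}(\pi)=(-1)^{n}\,\mathrm{sgn}(\pi)=(-1)^{n}\chi_{(1^n)}(\pi)$; comparing with \eqref{deffourier} then gives $\hat{\mu}_\lambda=(-1)^n$ for $\lambda=(1^n)$ and $\hat{\mu}_\lambda=0$ otherwise.

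For $\Lambda$ and $d_k$ I would start from the Schur-basis expansion $p_\rho=\sum_{\lambda\vdash n}\chi_\lambda(\rho)\,s_\lambda$ of the power-sum symmetric functions, equivalently $s_\lambda=\sum_{\rho\vdash n}z_\rho^{-1}\chi_\lambda(\rho)\,p_\rho$, where $z_\rho$ is the order of the centralizer of a permutation of cycle type $\rho$. For $\Lambda$: since $\Lambda|_{S_n^{\#}}(\rho)=n\,\indic_{\rho=(n)}$ and $z_{(n)}=n$, orthogonality of irreducible characters gives $\hat{\Lambda}_\lambda=\sum_{\rho}z_\rho^{-1}\,\Lambda|_{S_n^{\#}}(\rho)\,\chi_\lambda(\rho)=\chi_\lambda((n))$, and Newton's identity $p_n=\sum_{r=1}^{n}(-1)^{n-r}s_{(r,1^{n-r})}$---equivalently the Murnaghan--Nakayama rule for a single rim hook of size $n$---shows that $\chi_\lambda((n))=(-1)^{n-r}$ when $\lambda$ is the hook $(r,1^{n-r})$ and $0$ otherwise. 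For $d_k$: since $d_k|_{S_n^{\#}}(\rho)=k^{\ell(\rho)}=p_\rho(\underbrace{1,\ldots,1}_{k})$, the same computation gives $(\hat{d_k})_\lambda=\sum_{\rho}z_\rho^{-1}\chi_\lambda(\rho)\,p_\rho(\underbrace{1,\ldots,1}_{k})=s_\lambda(\underbrace{1,\ldots,1}_{k})$; this vanishes when $\ell(\lambda)>k$, while for $\ell(\lambda)\le k$ the principal specialization (equivalently the Weyl dimension formula for $\mathrm{GL}_k$) evaluates $s_\lambda(\underbrace{1,\ldots,1}_{k})=\prod_{1\le i<j\le k}\frac{\lambda_i-\lambda_j+j-i}{j-i}$ with $\lambda_i:=0$ for $i>\ell(\lambda)$.

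The only inputs beyond routine bookkeeping are the two classical evaluations $\chi_\lambda((n))=(-1)^{n-r}\indic_{\lambda=(r,1^{n-r})}$ and the product formula for $s_\lambda(\underbrace{1,\ldots,1}_{k})$, so I do not expect any genuine obstacle; in fact the statement and its proof are due to Rodgers, so one may alternatively just cite \cite[\S9]{rodgers2018}.
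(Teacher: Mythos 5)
Your proof is correct. The paper itself does not supply a proof---it attributes the result to Rodgers and cites \cite[\S 9]{rodgers2018}---but your argument is exactly the standard one: identify $\mu|_{S_n^{\#}}=(-1)^n\operatorname{sgn}=(-1)^n\chi_{(1^n)}$, compute $\hat\Lambda_\lambda=\chi_\lambda((n))$ by orthogonality and then apply the Murnaghan--Nakayama rule for an $n$-cycle, and use $d_k|_{S_n^{\#}}(\rho)=k^{\ell(\rho)}=p_\rho(1^k)$ together with $s_\lambda=\sum_\rho z_\rho^{-1}\chi_\lambda(\rho)p_\rho$ and the principal specialization of $s_\lambda$, which is the same chain of identities underlying Rodgers' computation.
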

\begin{corollary}\label{corhighercoeff}
	Let $n \ge 3$ and let $\lambda=(n-1,1)$ be a partition of $n$. In the notation of Proposition~\ref{rodgersexp}, we have
	\begin{equation*}
	\hat{\mu}_{\lambda} = 0, \, \hat{\Lambda}_{\lambda} = -1, \, (\hat{d_k})_{\lambda} = \binom{n+k-2}{k-2}(n-1).
	\end{equation*}
\end{corollary}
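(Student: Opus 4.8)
The plan is to read off each value directly from Proposition~\ref{rodgersexp}, the only nonroutine point being the evaluation of a Schur polynomial at a string of ones.

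\textbf{M\"obius and von Mangoldt coefficients.} Since $n \ge 3$, the partition $(n-1,1)$ is not equal to $(1^n)$, so part~(1) of Proposition~\ref{rodgersexp} immediately gives $\hat{\mu}_{(n-1,1)} = 0$. (This is precisely where the hypothesis $n \ge 3$ is needed; for $n=2$ one has $(n-1,1)=(1^2)$ and the value would be $1$.) For the von Mangoldt coefficient, write $(n-1,1)$ in the form $(r,1^{n-r})$ with $r=n-1$; then part~(2) gives $\hat{\Lambda}_{(n-1,1)} = (-1)^{n-(n-1)} = -1$.

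\textbf{Divisor coefficient.} Here $\ell((n-1,1)) = 2 \le k$ because $k \ge 2$, so part~(3) gives $(\hat{d_k})_{(n-1,1)} = s_{(n-1,1)}(1,\ldots,1)$ with $k$ ones. I would evaluate this with the displayed product formula, taking $\lambda_1 = n-1$, $\lambda_2 = 1$, and $\lambda_3 = \cdots = \lambda_k = 0$. Splitting the product over pairs $1 \le i < j \le k$ into four groups does the job: the pair $(i,j)=(1,2)$ contributes $\frac{(n-1)-1+1}{1} = n-1$; the pairs $(1,j)$ with $3 \le j \le k$ contribute $\prod_{j=3}^{k}\frac{n+j-2}{j-1}$; the pairs $(2,j)$ with $3 \le j \le k$ contribute $\prod_{j=3}^{k}\frac{j-1}{j-2}$; and every pair with $i \ge 3$ contributes $1$. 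The second product telescopes to $k-1$, while the first simplifies to $\frac{(n+k-2)!}{n!\,(k-1)!}$ (it is a ratio of the products $(n+1)\cdots(n+k-2)$ and $2\cdot 3\cdots(k-1)$). Multiplying, $s_{(n-1,1)}(1,\ldots,1) = (n-1)(k-1)\cdot\frac{(n+k-2)!}{n!\,(k-1)!} = (n-1)\binom{n+k-2}{k-2}$, which is the claimed value.

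There is no genuine obstacle; the one thing to be careful about is the bookkeeping of the index ranges in the product formula, in particular the edge case $k=2$, where both products over $j$ are empty and the formula correctly returns $n-1 = (n-1)\binom{n}{0}$.
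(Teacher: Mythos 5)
Your proposal is correct and is precisely the intended route: the paper states Corollary~\ref{corhighercoeff} without proof, leaving it as a direct specialization of Proposition~\ref{rodgersexp}, which is exactly what you carry out. All three computations check out, including the Schur-polynomial bookkeeping (the $(1,2)$ pair gives $n-1$, the $(1,j)$ pairs give $\frac{(n+k-2)!}{n!\,(k-1)!}$, the $(2,j)$ pairs telescope to $k-1$, pairs with $i \ge 3$ contribute $1$, and the product assembles to $(n-1)\binom{n+k-2}{k-2}$), and your note about the $k=2$ edge case is a sensible sanity check.
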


\section{Proof of Theorem~\ref{maintermcovthm}}
\subsection{Identities}
For any arithmetic function $\alpha$ on $\FF_q[T]$, any Hayes character $\chi$ and any positive integer $n$, we set 
\begin{equation}\label{defsn}
S(n,\alpha,\chi) = \sum_{f \in \Mnq} \alpha(f) \chi(f).
\end{equation}
For any positive integer $n$ and any non-zero polynomial $\Delta$ of degree $<n$, we set
\begin{equation}\label{defmnd}
\mathcal{M}_{n,q,\Delta} = \{ f \in \Mnq : \gcd(f,\Delta)=1 \}.
\end{equation}
\begin{proposition}\label{twoidengap}
Let $n$ be a positive integer. Let $\Delta \in \mathcal{A}_q$ be a polynomial of degree $<n$. Let $h$ be an integer such that $n \ge h \ge \deg \Delta$. Let $\alpha,\beta \colon \FF_q[T] \to \mathbb{C}$ be two arithmetic functions. Define
\begin{equation*}
\begin{split}
\mathrm{Cov}_{\mathcal{M}_q}(\alpha,\beta;n,\Delta,n-h) &= \mathrm{Cov}_{ A \in \left( \mathcal{M}_q / R_{n-h,\Delta} \right)^{\times}} \Big( \sum_{\substack{f \equiv A \bmod R_{n-h,\Delta},\\ f \in \Mnq}} \alpha(f), \sum_{\substack{f \equiv A \bmod R_{n-h,\Delta}, \\f \in \Mnq}} \beta(f)\Big) .
\end{split}
\end{equation*}
Then the following identities hold.
\begin{enumerate}
\item We have
\begin{equation}\label{covchi}
\mathrm{Cov}_{\mathcal{M}_q}(\alpha,\beta;n,\Delta,n-h) = \left(\frac{1}{q^{n-h} \phi(\Delta)} \right)^2 \sum_{\chi_0 \neq \chi \bmod R_{n-h,\Delta}} S(n,\alpha,\chi) \overline{S(n,\beta, \chi)}.
\end{equation}
\item Assume further that $h \ge \deg \Delta + 1$. Then
\begin{equation}\label{covdiff}
\begin{split}
&\frac{\mathrm{Cov}_{\mathcal{M}_q}(\alpha,\beta;n,\Delta,n-h)}{q^{h-\deg \Delta}} - \frac{\mathrm{Cov}_{\mathcal{M}_q}(\alpha,\beta;n,\Delta,n-h+1)}{q^{h-\deg \Delta - 1}} \\
&\quad = \sum_{\delta \in \mathcal{A}_{h-\deg \Delta -1,q}}  \mathrm{Cov}_{f \in \mathcal{M}_{n,q,\Delta}} (\alpha(f), \beta(f+\delta \Delta))  .
\end{split}
\end{equation}
\end{enumerate}
\end{proposition}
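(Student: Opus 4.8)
The plan is to prove part (1) by expanding the covariance over the unit group $(\mathcal{M}_q/R_{n-h,\Delta})^{\times}$ directly and then inserting the orthogonality relation \eqref{ortho2} to detect the congruence $f\equiv A\bmod R_{n-h,\Delta}$. Explicitly, for each residue class $A$ the inner sum $\sum_{f\equiv A} \alpha(f)$ can be written as $\frac{1}{q^{n-h}\phi(\Delta)}\sum_{\chi\bmod R_{n-h,\Delta}}\overline{\chi}(A)\,S(n,\alpha,\chi)$ using \eqref{ortho2} (here we use the hypothesis $n\ge h$, equivalently $n\ge (n-h)+\deg\Delta$ is \emph{not} needed for this step, but the representative-set structure and the fact that $\Mnq$ decomposes into representative sets will be), and similarly for $\beta$. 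Substituting both expansions into the formula $\mathrm{Cov}_S(X,Y)=\EE_S(X\overline{Y})-\EE_S X\cdot\EE_S\overline{Y}$ with $S=(\mathcal{M}_q/R_{n-h,\Delta})^{\times}$, and then summing over $A$ using the dual orthogonality \eqref{ortho1} to collapse the $\chi_1,\chi_2$ double sum to its diagonal, leaves exactly $\left(\frac{1}{q^{n-h}\phi(\Delta)}\right)^2\sum_{\chi}S(n,\alpha,\chi)\overline{S(n,\beta,\chi)}$; the $\chi=\chi_0$ term is precisely $\EE_S X\cdot\EE_S\overline{Y}$ and is cancelled by the second term of the covariance, which is why the sum on the right of \eqref{covchi} omits $\chi_0$. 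The only care needed is bookkeeping of the normalizing factor $|S|=q^{n-h}\phi(\Delta)$ and making sure the mean values match up.

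For part (2), the strategy is to use part (1) twice — once with parameter $n-h$ and once with $n-h+1$ — and compare. The key structural input is that $R_{n-h,\Delta}$ refines $R_{n-h+1,\Delta}$: fixing $n-h+1$ next-to-leading coefficients plus the residue mod $\Delta$ is a finer equivalence than fixing only $n-h$ of them, and each $R_{n-h+1,\Delta}$-class splits into exactly $q$ classes of $R_{n-h,\Delta}$ (the freedom being the $(n-h+1)$-th next-to-leading coefficient, which ranges over $\FF_q$). Equivalently, $\GAP(n,A;R_{n-h,\Delta})$ is a union over $\delta\in\FF_q$ (or rather over the relevant coefficient) of smaller GAPs, and in fact the right object is: the difference of the two covariances, suitably normalized, is a sum over $\delta\in\mathcal{A}_{h-\deg\Delta-1,q}$ of the covariance of $\alpha(f)$ against $\beta(f+\delta\Delta)$ over $f\in\mathcal{M}_{n,q,\Delta}$. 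I would prove this by translating everything back to the $\chi$-side via \eqref{covchi}: the characters mod $R_{n-h,\Delta}$ that are \emph{not} already mod $R_{n-h+1,\Delta}$ are exactly the "new" short-interval characters at level $n-h$, and the sum $\frac{1}{q^{h-\deg\Delta}}\mathrm{Cov}(\cdots,n-h)-\frac{1}{q^{h-\deg\Delta-1}}\mathrm{Cov}(\cdots,n-h+1)$ isolates their contribution; one then re-expands using \eqref{ortho2} at level $n-h$ versus $n-h+1$ to recognize the shift-by-$\delta\Delta$ structure. Alternatively — and this may be cleaner — one works entirely on the "polynomial side": write each covariance via part (1) is a detour, and instead directly observe that $\sum_{f\equiv A\bmod R_{n-h,\Delta}}\alpha(f)$ is obtained from the coarser sums by restricting one more coefficient, and that pairing $\alpha$-sums with $\beta$-sums over all refinements, minus the pairing over the coarser classes, produces precisely the off-diagonal terms $\alpha(f)\overline{\beta(f')}$ with $f\equiv f'\bmod\Delta$, $f-f'$ of controlled degree and leading behavior, which repackages as $\overline{\beta(f+\delta\Delta)}$ with $\delta$ running over $\mathcal{A}_{h-\deg\Delta-1,q}$.

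I expect the main obstacle to be part (2): keeping straight the exact range of $\delta$ and the exact power of $q$. The subtlety is that passing from $R_{n-h,\Delta}$ to $R_{n-h+1,\Delta}$ changes the constraint "$\deg(f-A)<n-(n-h)=h$" to "$\deg(f-A)<h-1$", so the increments $f-f'$ that survive the difference have degree exactly in the window governed by $h-\deg\Delta-1$ once one factors out $\Delta$ (using $\gcd(f,\Delta)=1$ so that $f'=f+\delta\Delta$ automatically stays coprime to $\Delta$ and in the right residue class); the hypothesis $h\ge\deg\Delta+1$ is exactly what makes this window nonempty. Verifying that the cross terms assemble into a genuine covariance (i.e. that the "mean times mean" subtractions on both sides also match under the difference, so no spurious main term appears) is the step most prone to sign and normalization errors, and I would do it carefully by tracking the $\chi_0$-contributions on each side and checking they telescope correctly.
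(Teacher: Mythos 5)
Your proposal for part (1) — expanding the GAP sums via \eqref{ortho2}, collapsing the double character sum with \eqref{ortho1}, and observing that the $\chi_0$ diagonal term exactly equals $\EE_S X\cdot\EE_S\overline Y$ and hence cancels — is precisely what the paper does (the paper centers the GAP sums first, which is the same computation). For part (2), your ``cleaner'' polynomial-side alternative is exactly the paper's argument: expand each covariance as a double sum over $f,g$ in the same GAP, reparametrize via $\delta=(g-f)/\Delta$ with $\deg\delta\le h-\deg\Delta-1$ (resp.\ $\le h-\deg\Delta-2$ at the finer level), and subtract to isolate $\deg\delta=h-\deg\Delta-1$. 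One slip to fix in your exposition: you write that ``$R_{n-h,\Delta}$ refines $R_{n-h+1,\Delta}$'' and that each $R_{n-h+1,\Delta}$-class splits into $q$ classes of $R_{n-h,\Delta}$ — both have the direction backwards (it is $R_{n-h+1,\Delta}$ that is finer, and each $R_{n-h,\Delta}$-class splits into $q$ classes of $R_{n-h+1,\Delta}$) — but your subsequent reasoning about the degree window and the role of $h\ge\deg\Delta+1$ uses the correct direction, so this is a wording error rather than a gap in the argument.
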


\begin{proof}
\begin{enumerate}
\item For any $A \in \left( \mathcal{M}_q / R_{n-h,\Delta} \right)^{\times}$, \eqref{ortho2} implies that
\begin{equation}\label{indicatorform}
 \indic_{g \in \GAP(n,A;R_{n-h,\Delta}) }= \frac{\sum_{\chi \in G(R_{n-h,\Delta})} \overline{\chi}(A) \chi(g)}{q^{n-h}\phi(\Delta)}
\end{equation}
for all $g \in \Mnq$. From \eqref{indicatorform} we obtain
\begin{equation}\label{orderchangelemma}
\begin{split}
\sum_{g \in \GAP(n,A;R_{n-h,\Delta})} \alpha(g) &= \sum_{g \in \Mnq} \alpha(g) \cdot \indic_{g \in \GAP(n,A;R_{n-h,\Delta})} = \frac{1}{q^{n-h}\phi(\Delta)}\sum_{g \in \Mnq} \alpha(g) \Big(\sum_{\chi \in G(R_{n-h,\Delta})} \overline{\chi}(A)\chi(g) \Big)\\
&= \frac{1}{q^{n-h}\phi(\Delta)} \sum_{\chi \in G(R_{n-h,\Delta})} \overline{\chi}(A) \Big( \sum_{g \in \Mnq} \alpha(g) \chi(g) \Big) =\frac{\sum_{\chi \in G(R_{n-h,\Delta})} \overline{\chi}(A) S(n,\alpha ,\chi )}{q^{n-h}\phi(\Delta)}.
\end{split}
\end{equation}
The term corresponding to $\chi = \chi_0$ in \eqref{orderchangelemma} is
\begin{equation}\label{meanval}
\frac{S(n,\alpha , \chi_0)}{q^{n-h}\phi(\Delta)}= q^{h-\deg \Delta} \EE_{\mathcal{M}_{n,q,\Delta}} \alpha.
\end{equation}
From \eqref{orderchangelemma} and \eqref{meanval}, we have
\begin{equation}\label{varproof}
\begin{split}
\mathrm{Cov}_{\mathcal{M}_q}(\alpha,\beta;n,\Delta,n-h) &=\frac{1}{q^{n-h} \phi(\Delta)} \sum_{A \in \left( \mathcal{M}_q / R_{n-h,\Delta} \right)^{\times}} \Big( \sum_{f \in \GAP(n,A;R_{n-h,\Delta})}\alpha(f) - q^{h-\deg Q} \EE_{\mathcal{M}_{n,q,\Delta}}\alpha   \Big)\\
&\qquad \cdot  \Big( \sum_{f \in \GAP(n,A;R_{n-h,\Delta})} \overline{ \beta}(f) - q^{h-\deg Q} \EE_{\mathcal{M}_{n,q,\Delta}}\overline{\beta}  \Big)\\
&= \frac{1}{q^{n-h} \phi(\Delta)} \sum_{A \in \left( \mathcal{M}_q / R_{n-h,\Delta} \right)^{\times}} \frac{\sum_{\chi_0 \neq \chi_1 \in G(R_{n-h,\Delta}) } \overline{\chi_1}(A) S(n,\alpha,\chi_1)}{q^{n-h}\phi(\Delta)}\\
& \qquad \cdot \frac{\sum_{\chi_0 \neq \chi_2 \in G(R_{n-h,\Delta}) } \chi_2(A) \overline{ S(n,\beta,\chi_2)}}{q^{n-h}\phi(\Delta)}\\
&=\left(\frac{1}{q^{n-h} \phi(\Delta)} \right)^2 \sum_{\substack{\chi_1,\chi_2 \in \\  G(R_{n-h,\Delta}) \setminus \{ \chi_0 \} }} S(n,\alpha,\chi_1) \overline{S(n,\beta,\chi_2)} \frac{\sum_{A \in \left( \mathcal{M}_q / R_{n-h,\Delta}\right)^{\times}} \overline{\chi_1}(A) \chi_2(A)}{q^{n-h} \phi(\Delta)}.
\end{split}
\end{equation}
We conclude the proof of \eqref{covchi} by applying the orthogonality relation \eqref{ortho1} to the right hand side of \eqref{varproof}.
\item By expanding the definition of $\mathrm{Cov}_{\mathcal{M}_q}(\alpha,\beta;n,\Delta,n-h)$, we have
\begin{equation}\label{expandcov}
\begin{split}
\frac{\mathrm{Cov}_{\mathcal{M}_q}(\alpha,\beta;n,\Delta,n-h)}{q^{h-\deg \Delta}} &= 
\frac{1}{q^{n-\deg \Delta}\phi(\Delta)}\sum_{A \in \left( \mathcal{M}_q/R_{n-h,\Delta}\right)^{\times}} \sum_{f ,g \in \GAP(n,A;R_{n-h,\Delta})} \alpha(f) \overline{\beta(g)} \\
&\qquad - q^{h-\deg \Delta} \EE_{\mathcal{M}_{n,q,\Delta}} \alpha \cdot \EE_{\mathcal{M}_{n,q,\Delta}} \overline{\beta}.
\end{split}
\end{equation}
Instead of summing over pairs of polynomials $f,g \in \GAP(n,A;R_{n-h,\Delta})$ in \eqref{expandcov}, we sum over $f \in \GAP(n,A;R_{n-h,\Delta})$ and $\delta :=\frac{g-f}{\Delta}$, an arbitrary polynomial of degree $\le h-\deg \Delta-1$:
\begin{equation}\label{deltasum}
\begin{split}
\frac{\mathrm{Cov}_{\mathcal{M}_q}(\alpha,\beta;n,\Delta,n-h)}{q^{h-\deg \Delta}} &= 
\frac{1}{q^{n-\deg \Delta}\phi(\Delta)}\sum_{A \in \left( \mathcal{M}_q/R_{n-h,\Delta}\right)^{\times}} \sum_{f\in \GAP(n,A;R_{n-h,\Delta})} \sum_{\delta: \deg \delta \le h-\deg \Delta -1} \alpha(f) \overline{\beta}(f+\delta \Delta)\\
& \qquad - q^{h-\deg \Delta}\EE_{\mathcal{M}_{n,q,\Delta}} \alpha \cdot \EE_{\mathcal{M}_{n,q,\Delta}} \overline{\beta} \\
&= \frac{1}{q^{n-\deg \Delta}\phi(\Delta)}\sum_{f \in \mathcal{M}_{n,q,\Delta}} \sum_{\delta: \deg \delta \le h-\deg \Delta -1} \alpha(f) \overline{\beta}(f+\delta \Delta) \\
&\qquad -q^{h-\deg \Delta} \EE_{\mathcal{M}_{n,q,\Delta}} \alpha \cdot \EE_{\mathcal{M}_{n,q,\Delta}} \overline{\beta}.
\end{split}
\end{equation}
Similarly, putting $h-1$ for $h$ in \eqref{deltasum}, we get
\begin{equation}\label{deltasum2}
\begin{split}
\frac{\mathrm{Cov}_{\mathcal{M}_q}(\alpha,\beta;n,\Delta,n-h+1)}{q^{h-1-\deg \Delta}} &= \frac{1}{q^{n-\deg \Delta}\phi(\Delta)}\sum_{f \in \mathcal{M}_{n,q,\Delta}} \sum_{\delta: \deg \delta \le h-\deg \Delta -2} \alpha(f) \overline{\beta}(f+\delta \Delta) \\
&\qquad -q^{h-\deg \Delta-1} \EE_{\mathcal{M}_{n,q,\Delta}} \alpha \cdot \EE_{\mathcal{M}_{n,q,\Delta}} \overline{\beta}.
\end{split}
\end{equation}
(Note: If $h-\deg \Delta - 1 = 0$, then the only polynomial $\delta$ of degree $\le h-\deg \Delta -2$ is the zero polynomial, whose degree is defined to be $-\infty$.) Subtracting \eqref{deltasum2} from \eqref{deltasum}, we get
\begin{equation*}
\begin{split}
&\frac{\mathrm{Cov}_{\mathcal{M}_q}(\alpha,\beta;n,\Delta,n-h)}{q^{h-\deg \Delta}}  - \frac{\mathrm{Cov}_{\mathcal{M}_q}(\alpha,\beta;n,\Delta,n-h+1)}{q^{h-1-\deg \Delta}}  \\
&=\frac{1}{q^{n-\deg \Delta}\phi(\Delta)}\sum_{f \in \mathcal{M}_{n,q,\Delta}} \sum_{\delta: \deg \delta = h-\deg \Delta -1} \alpha(f) \overline{\beta}(f+\delta \Delta) - q^{h-\deg \Delta}(1-\frac{1}{q}) \EE_{\mathcal{M}_{n,q,\Delta}} \alpha \cdot \EE_{\mathcal{M}_{n,q,\Delta}} \overline{\beta}\\
&= \sum_{\delta: \deg \delta = h-\deg \Delta -1} \left( \EE_{f\in \mathcal{M}_{n,q,\Delta}}\alpha(f) \overline{\beta}(f+\delta \Delta) - \EE_{\mathcal{M}_{n,q,\Delta}} \alpha \cdot  \EE_{\mathcal{M}_{n,q,\Delta}} \overline{\beta} \right),
\end{split}
\end{equation*}
which establishes \eqref{covdiff}.
\end{enumerate}
\end{proof}
\subsection{Estimates}
\begin{lemma}\label{lemphi}
Let $\Delta \in \mathcal{A}_q$. We have
\begin{equation*}
\frac{|\Delta|}{\phi(\Delta)} = 1 + \frac{a_{\Delta,q}}{q} + O_{\deg \Delta}\Big( \frac{1}{q^2} \Big).
\end{equation*}
\end{lemma}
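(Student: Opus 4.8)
The plan is to start from the Euler-product formula for the function-field totient. If $\Delta = c \prod_{P \mid \Delta} P^{e_P}$ with $c \in \FF_q^{\times}$ and the product running over the distinct monic irreducible divisors $P$ of $\Delta$, then $\phi(\Delta) = |\Delta| \prod_{P \mid \Delta}(1 - |P|^{-1})$, so that
\[
\frac{|\Delta|}{\phi(\Delta)} = \prod_{P \mid \Delta} \frac{1}{1 - q^{-\deg P}}.
\]
I would then split this product according to whether $\deg P = 1$ or $\deg P \ge 2$. The monic linear polynomials dividing $\Delta$ are exactly the $T - a$ with $a$ a root of $\Delta$ in $\FF_q$, so there are precisely $a_{\Delta,q}$ of them, and the degree-one part of the product equals $(1 - 1/q)^{-a_{\Delta,q}}$.

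For the degree-$\ge 2$ part, each factor satisfies $(1 - q^{-\deg P})^{-1} = 1 + O(q^{-2})$, and since $\Delta$ has at most $\deg\Delta$ distinct irreducible divisors, the product over all such $P$ is $1 + O_{\deg\Delta}(q^{-2})$. For the degree-one part, expanding $(1 - 1/q)^{-1} = 1 + q^{-1} + O(q^{-2})$ and raising to the power $a_{\Delta,q}$, where $0 \le a_{\Delta,q} \le \deg\Delta$, the binomial theorem gives $(1 - 1/q)^{-a_{\Delta,q}} = 1 + a_{\Delta,q}/q + O_{\deg\Delta}(q^{-2})$. Multiplying these two estimates and absorbing the $q^{-2}$ cross terms into the error term yields the claimed formula.

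There is no serious obstacle here; the one point worth tracking carefully is that every implied constant depends only on $\deg\Delta$. This is automatic because the number of distinct prime divisors of $\Delta$ — and in particular $a_{\Delta,q}$ — is bounded above by $\deg\Delta$, so each product that appears has a bounded number of factors, uniformly in $q$ and in the factorization type of $\Delta$.
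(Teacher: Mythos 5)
Your proof is correct and essentially the same as the paper's: both start from the Euler product $|\Delta|/\phi(\Delta) = \prod_{P\mid\Delta}(1-|P|^{-1})^{-1}$, isolate the contribution of the monic linear divisors (there being exactly $a_{\Delta,q}$ of them), and absorb everything else into $O_{\deg\Delta}(q^{-2})$. The only difference is organizational — the paper re-expands the product as a sum over squarefree monic divisors of $\Delta$ and bounds that sum, while you estimate the degree-one and degree-$\ge 2$ factors of the product directly — but the underlying estimate and the reason the implied constant depends only on $\deg\Delta$ (the number of distinct prime factors is at most $\deg\Delta$) are identical.
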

\begin{proof}
Since Euler's totient function is multiplicative, we have
\begin{equation*}
\frac{|\Delta|}{\phi(\Delta)} = \prod_{P \mid \Delta, P \in \mathcal{P}_q} \left(1+\frac{1}{|P|-1}\right) = \sum_{D \mid \Delta, D\in \mathcal{M}_q} \frac{\mu^2_q(D)}{\prod_{P \mid D}(|P|-1)} = 1 + \frac{a_{\Delta,q}}{q-1} + \sum_{\substack{D \mid \Delta, D\in \mathcal{M}_q \\ \deg \Delta \ge 2}} \frac{\mu^2_q(D)}{\prod_{P \mid D}(|P|-1)}.
\end{equation*}
In particular, since $|P|-1 \ge |P|/2$ for $P \in \mathcal{P}_q$, 
\begin{equation}\label{eq1phi}
\Big| \frac{|\Delta|}{\phi(\Delta)} - \left( 1 +\frac{a_{\Delta,q}}{q-1} \right) \Big| \le \sum_{\substack{D \mid \Delta, D\in \mathcal{M}_q \\ \deg \Delta \ge 2}} \frac{d_{2,q}(D)}{|D|} \le \frac{d_{2,q}^2(\Delta)}{q^2},
\end{equation}
where in the last inequality we have used $|D| \ge q^2$ and the fact that the number of summands is, by definition, bounded by $d_{2,q}(\Delta)$. If $\Delta=c \cdot \prod_{i} P_i^{e_i}$ is the prime factorization of $\Delta$, then
\begin{equation}\label{eq2phi}
d_{2,q}(\Delta) = \prod_{i} (e_i+1) \le 2^{\sum_{i} e_i} \le 2^{\deg \Delta}.
\end{equation}
The lemma follows from \eqref{eq1phi} and \eqref{eq2phi}.
\end{proof}
Later we sometimes use
\begin{equation}\label{weakphi}
\frac{|\Delta|}{\phi(\Delta)} = 1  + O_{\deg \Delta}\Big( \frac{1}{q} \Big),
\end{equation}
a weaker version of Lemma~\ref{lemphi}.
\begin{lemma}\label{alphasumcoplem}
Let $\alpha\colon \FF_q[T] \to \mathbb{C}$ be a factorization function. Let $n$ be a positive integer. Let $\Delta$ be a non-zero polynomial of degree $<n$. Then
\begin{equation}\label{meancop}
\sum_{f \in \Mnq:\, \gcd(f,\Delta) \neq 1} \alpha(f) = a_{\Delta,q} \sum_{f \in \mathcal{M}_{n-1,q}} \alpha(f\cdot T) + O_{\mathrm{max} (\alpha;n), \deg \Delta }(q^{n-2}).
\end{equation}
\end{lemma}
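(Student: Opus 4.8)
<br>

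The plan is to estimate the sum $\sum_{f \in \Mnq:\, \gcd(f,\Delta)\neq 1} \alpha(f)$ by sorting the polynomials $f$ according to the monic part of $\gcd(f,\Delta)$. Write $\Delta = c\cdot \prod_i P_i^{e_i}$ with $P_i$ distinct monic primes. For a monic divisor $D \mid \mathrm{rad}(\Delta) := \prod_i P_i$ with $D\neq 1$, the contribution of those $f$ with $\gcd(f,\Delta)$ having radical exactly $D$ can be parametrized by $f = D\cdot g$ where $g$ ranges over monic polynomials of degree $n-\deg D$, with some extra coprimality/divisibility constraints coming from the higher prime powers in $\Delta$; crucially, since $\alpha$ is a factorization function, $\alpha(Dg)$ depends only on $\lambda_{Dg}$. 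The idea is that the dominant contribution comes from $D$ linear, i.e.\ $D = T+a$ for one of the $a_{\Delta,q}$ roots $a$ of $\Delta$ in $\FF_q$, and for each such $D$ the contribution is essentially $\sum_{g\in\mathcal{M}_{n-1,q}} \alpha((T+a)g)$, which after the change of variables $T\mapsto T+a$ (a ring automorphism of $\FF_q[T]$ preserving factorization types and monicity, hence preserving $\alpha$) equals $\sum_{g\in\mathcal{M}_{n-1,q}}\alpha(T\cdot g)$.

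First I would make the decomposition precise: for each nonempty $D\mid \mathrm{rad}(\Delta)$, let $s_D = \sum_{f\in\Mnq,\ \mathrm{rad}(\gcd(f,\Delta)) = D}\alpha(f)$, so the left-hand side is $\sum_{1\neq D\mid\mathrm{rad}(\Delta)} s_D$. Then I would bound the terms with $\deg D\ge 2$ trivially: the number of monic $f$ of degree $n$ divisible by such $D$ is at most $q^{n-\deg D}\le q^{n-2}$, and each $|\alpha(f)|\le \mathrm{max}(\alpha;n)$, so the total from all these $D$ is $O_{\mathrm{max}(\alpha;n),\deg\Delta}(q^{n-2})$ since there are $O_{\deg\Delta}(1)$ such divisors. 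Next, for $D = T+a$ linear, I would write $f = (T+a)g$ with $g\in\mathcal{M}_{n-1,q}$ and note that the condition ``$\mathrm{rad}(\gcd(f,\Delta)) = T+a$'' differs from ``$(T+a)\mid f$'' only by removing $f$ also divisible by some other linear factor $T+a'$ of $\Delta$ or by $(T+a)^2$ when $(T+a)^2\mid\Delta$; each such removed set again has size $O(q^{n-2})$ and so contributes to the error term. Hence $s_{T+a} = \sum_{g\in\mathcal{M}_{n-1,q}}\alpha((T+a)g) + O_{\mathrm{max}(\alpha;n),\deg\Delta}(q^{n-2})$.

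Then I would use the automorphism $\sigma_a\colon f(T)\mapsto f(T-a)$ of $\FF_q[T]$: it maps $\mathcal{M}_{m,q}$ bijectively to itself, sends $(T+a)g(T)$ to $T\cdot g(T-a)$, and preserves the factorization type $\lambda$, so $\alpha((T+a)g(T)) = \alpha(T\cdot g(T-a))$; reindexing $g(T-a)\mapsto g$ gives $\sum_{g\in\mathcal{M}_{n-1,q}}\alpha((T+a)g) = \sum_{g\in\mathcal{M}_{n-1,q}}\alpha(T\cdot g)$, independent of $a$. Summing over the $a_{\Delta,q}$ roots $a\in\FF_q$ of $\Delta$ and collecting the $O(q^{n-2})$ errors (there are only $O_{\deg\Delta}(1)$ of them) yields \eqref{meancop}.

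The main obstacle, such as it is, is purely bookkeeping: making the inclusion–exclusion between the events ``$\mathrm{rad}(\gcd(f,\Delta)) = D$'' and ``$D\mid f$'' clean enough that every correction term is visibly $O(q^{n-2})$ with an implied constant depending only on $\deg\Delta$ and $\mathrm{max}(\alpha;n)$ — in particular being careful that when $\Delta$ is not squarefree, requiring $\mathrm{rad}(\gcd(f,\Delta))$ to equal a linear $D=T+a$ still forces $f$ into a coset of size $q^{n-1}$ up to an $O(q^{n-2})$ discrepancy, and that a factorization function's value at $Dg$ is genuinely controlled (it is, since $|\alpha|\le\mathrm{max}(\alpha;n)$ on all of $\Mnq$). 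No deep input is needed beyond the fact that ring automorphisms of $\FF_q[T]$ of the form $T\mapsto T-a$ preserve factorization types.
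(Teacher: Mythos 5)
Your proof is correct and follows essentially the same approach as the paper: both split off the contributions from $\gcd$ of degree $\ge 2$ (and from multiple linear factors dividing $f$) as $O_{\max(\alpha;n),\deg\Delta}(q^{n-2})$, and both use the ring automorphism $T\mapsto T\pm a$ to show that each linear-factor contribution equals $\sum_{g\in\mathcal{M}_{n-1,q}}\alpha(T\cdot g)$. The only difference is cosmetic bookkeeping — you decompose by the radical of $\gcd(f,\Delta)$ and estimate the discrepancy with $\{f:(T+a)\mid f\}$, whereas the paper splits by $\deg\gcd(f,\Delta)$ and then applies inclusion–exclusion over the linear factors.
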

\begin{proof}
We write 
\begin{equation*}
\sum_{f \in \Mnq:\gcd(f,\Delta) \neq 1} \alpha(f) = S_1 + S_2,
\end{equation*}
where
\begin{equation*}
\begin{split}
S_1 &= \sum_{f \in \Mnq:\, \deg \gcd(f,\Delta) =1} \alpha(f),\\
S_2 &= \sum_{f \in \Mnq:\, \deg\gcd(f,\Delta) \ge 2} \alpha(f).
\end{split}
\end{equation*}
We have
\begin{equation*}
\begin{split}
\left| S_2 \right| &\le \mathrm{max} (\alpha;n)  \cdot \sum_{f \in \Mnq:\, \deg \gcd(f,\Delta) \ge 2} 1 = \mathrm{max} (\alpha;n)  \cdot \sum_{\substack{D \mid \Delta,\, D \in \mathcal{M}_{q}\\ \deg D \ge 2}} \sum_{\substack{f \in \Mnq\\ \gcd(f,\Delta)=D}} 1 \\
&\le \mathrm{max} (\alpha;n)  \cdot \sum_{D \mid \Delta,\, D \in \mathcal{M}_{q},\, \deg D \ge 2} q^{n-\deg D} \le q^{n-2} \cdot \mathrm{max} (\alpha;n)  \cdot d_{2,q}(\Delta).
\end{split}
\end{equation*}
As in the proof of Lemma~\ref{lemphi}, we have $d_{2,q}(\Delta) \le 2^{\deg \Delta}$ and so 
\begin{equation}\label{s2copest}
\left| S_2 \right| = O_{\mathrm{max} (\alpha;n), \deg \Delta}(q^{n-2}).
\end{equation}
Let $X = \{ T-a : a \in \FF_q,\, \Delta(a) = 0\}$. 
By inclusion-exclusion, we have
\begin{equation*}
\begin{split}
S_1 &= \sum_{i=1}^{n} (-1)^{i-1}\sum_{\substack{ \{L_1,\ldots, L_i \} \subseteq X \\ L_j \text{ distinct}}}  \sum_{ \substack{f \in \Mnq\\ \prod_{j=1}^{i} L_j \mid f} }\alpha(f)\\
&= S_3+S_4
\end{split}
\end{equation*}
where
\begin{equation*}
\begin{split}
S_3 &= \sum_{T-a \in X} \sum_{g \in \mathcal{M}_{n-1,q}} \alpha(g(T) \cdot (T-a)),\\
S_4 &= \sum_{i=2}^{n} (-1)^{i-1}\sum_{\substack{ \{L_1,\ldots, L_i \} \subseteq X \\ L_j \text{ distinct}}}  \sum_{ \substack{f \in \Mnq\\ \prod_{j=1}^{i} L_j \mid f} }\alpha(f).
\end{split}
\end{equation*}
We estimate $S_4$ as follows:
\begin{equation}\label{s3copest}
\begin{split}
\left| S_4 \right| &\le \mathrm{max} (\alpha;n) \cdot \sum_{i=2}^{n} \binom{|X|}{i} q^{n-i} \le \mathrm{max} (\alpha;n) \cdot q^{n-2} \cdot 2^{|X|}\\
& \le \mathrm{max} (\alpha;n) \cdot q^{n-2}\cdot 2^{\deg \Delta} = O_{\mathrm{max} (\alpha;n),\deg \Delta}(q^{n-2}).
\end{split}
\end{equation}
Since $\alpha$ is a factorization function, and $g(T)\cdot (T-a), g(T+a) \cdot T$ have the same extended factorization type, we have
\begin{equation}\label{s4copest}
S_3 =a_{\Delta,q} \sum_{f \in \mathcal{M}_{n-1,q}} \alpha(f\cdot T).
\end{equation}
From \eqref{s2copest}, \eqref{s3copest} and \eqref{s4copest} we obtain \eqref{meancop} as needed. 
\end{proof}
The previous two lemmas were elementary. The following lemma requires a deeper result on the cycle structure of polynomials over finite fields.
\begin{lemma}\label{lemcorbiggcd}
Let $\alpha,\beta \colon \FF_q[T] \to \mathbb{C}$ be factorization functions. Let $n$ be a positive integer. Let $\Delta$ be a non-zero polynomial of degree $<n$. Then
\begin{equation}\label{noncopiden}
\sum_{f \in \Mnq:\,\gcd(f,\Delta) \neq 1} \alpha(f) \overline{\beta}(f+\Delta) = a_{\Delta,q} \cdot \frac{\sum_{f \in \mathcal{M}_{n-1,q}}\alpha(f \cdot T) \sum_{g \in \mathcal{M}_{n-1,q}}\overline{\beta}(g \cdot T)}{q^{n-1}} + O_{\mathrm{max} (\beta;n),\mathrm{max} (\alpha;n),n}\Big( q^{n-\frac{3}{2}} \Big).
\end{equation}
\end{lemma}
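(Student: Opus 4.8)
The plan is to isolate the dominant contribution, which comes from those $f$ whose greatest common divisor with $\Delta$ is a single monic linear divisor $T-a$ of $\Delta$, and to match each such block with a shorter correlation sum, over $\mathcal{M}_{n-1,q}$, to which the Andrade--Bary-Soroker--Rudnick estimate \eqref{monicest} already applies. Throughout we may assume $q\ge n$, since for the finitely many smaller $q$ the claimed bound is trivial.

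\textbf{Step 1 (reduce to linear gcd).} First I would expand $\indic_{\gcd(f,\Delta)\ne 1}=1-\prod_{P\mid\Delta}(1-\indic_{P\mid f})$ by inclusion--exclusion over the monic prime divisors $P$ of $\Delta$. Since the number of $f\in\Mnq$ divisible by a fixed polynomial of degree $d$ is $q^{n-d}$, and $|\alpha(f)\overline{\beta}(f+\Delta)|\le\mathrm{max}(\alpha;n)\,\mathrm{max}(\beta;n)$, every resulting term indexed by a product of primes of total degree $\ge 2$ contributes $O(\mathrm{max}(\alpha;n)\,\mathrm{max}(\beta;n)\,q^{n-2})$, and there are only $O_{\deg\Delta}(1)$ such terms. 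Writing $X=\{T-a:a\in\FF_q,\ \Delta(a)=0\}$ for the set of monic linear divisors of $\Delta$ (so $|X|=a_{\Delta,q}$), what remains is
\[ \sum_{\substack{f\in\Mnq\\ \gcd(f,\Delta)\ne 1}}\alpha(f)\overline{\beta}(f+\Delta)=\sum_{L\in X}\ \sum_{\substack{f\in\Mnq\\ L\mid f}}\alpha(f)\overline{\beta}(f+\Delta)+O_{n,\mathrm{max}(\alpha;n),\mathrm{max}(\beta;n)}(q^{n-2}). \]
Note that $L\mid\Delta$ forces $L\mid f+\Delta$ whenever $L\mid f$, which legitimises the substitution in the next step.

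\textbf{Step 2 (each block is a short correlation sum).} Fix $L=T-a\in X$ and write $f=Lg$ with $g\in\mathcal{M}_{n-1,q}$, so $f+\Delta=L(g+\Delta')$ with $\Delta'=\Delta/L$ a nonzero polynomial of degree $\le n-2$. The map $g\mapsto\alpha(Lg)$ is \emph{not} a factorization function, but it coincides with the genuine factorization function $\tilde\alpha$ coming from the $\mathrm{EFT}$-function $\lambda\mapsto\alpha(\lambda\cup\{(1,1)\})$ on every $g$ with $L\nmid g$; likewise for $\beta$ and $\tilde\beta$. The exceptional set $\{g\in\mathcal{M}_{n-1,q}:L\mid g\ \text{or}\ L\mid g+\Delta'\}$ has size $O(q^{n-2})$, and $\mathrm{max}(\tilde\alpha;n-1)\le\mathrm{max}(\alpha;n)$ once $q\ge n$ (then every extended factorization type of size $n$ containing a $(1,1)$ part is realised by a monic degree-$n$ polynomial $gL'$). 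Replacing $\alpha(Lg),\overline{\beta}(L(g+\Delta'))$ by $\tilde\alpha(g),\overline{\tilde\beta}(g+\Delta')$ at a cost of $O_{n,\mathrm{max}(\alpha;n),\mathrm{max}(\beta;n)}(q^{n-2})$, and then applying \eqref{monicest} to the factorization functions $\tilde\alpha,\tilde\beta$ at degree $n-1$ with shift $\Delta'$, I obtain
\[ \sum_{\substack{f\in\Mnq\\ L\mid f}}\alpha(f)\overline{\beta}(f+\Delta)=\frac{\big(\sum_{g\in\mathcal{M}_{n-1,q}}\tilde\alpha(g)\big)\big(\sum_{g\in\mathcal{M}_{n-1,q}}\overline{\tilde\beta}(g)\big)}{q^{n-1}}+O_{n,\mathrm{max}(\alpha;n),\mathrm{max}(\beta;n)}(q^{n-\frac{3}{2}}). \]

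\textbf{Step 3 (clean up) and the main obstacle.} Since $\tilde\alpha$ agrees with $g\mapsto\alpha(Tg)$ off the set $\{g:T\mid g\}$ of size $q^{n-2}$, one has $\sum_{g\in\mathcal{M}_{n-1,q}}\tilde\alpha(g)=\sum_{g\in\mathcal{M}_{n-1,q}}\alpha(Tg)+O(q^{n-2})$, and likewise for $\beta$; as both of these sums have size $O(q^{n-1})$, substituting them in the numerator above costs $O(q^{2n-3})$, i.e.\ $O(q^{n-2})$ after dividing by $q^{n-1}$. Summing over the $a_{\Delta,q}=O_n(1)$ elements of $X$ then yields \eqref{noncopiden}; this last step is essentially the correlation analogue of Lemma~\ref{alphasumcoplem}. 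The only genuinely deep ingredient is \eqref{monicest}, which is where the result on the cycle structure of polynomials over finite fields enters, through the Galois-theoretic proof of Andrade--Bary-Soroker--Rudnick; I expect the part of the write-up requiring the most care — though not depth — to be the verification that $g\mapsto\alpha(Lg)$ differs from an honest factorization function only on a set of size $O(q^{n-2})$, so that \eqref{monicest} can be invoked at all. All error terms produced along the way are of size $O(q^{n-2})$, comfortably inside the claimed $O(q^{n-\frac{3}{2}})$, which is itself dictated by the single use of the $O(q^{-1/2})$ saving in \eqref{monicest}.
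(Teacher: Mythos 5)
Your proof is correct and follows the same overall strategy as the paper: reduce (by inclusion--exclusion, with $O(q^{n-2})$ error from any divisor of total degree $\ge 2$) to the case where $\gcd(f,\Delta)$ is a single linear factor $L=T-a$, set $g=f/L$, and estimate the resulting degree-$(n-1)$ correlation with shift $\Delta/L$ via the Andrade--Bary-Soroker--Rudnick theorem. The paper invokes ABR through the cycle-type probability statement \eqref{eq:indep} and does the bookkeeping of types $(\lambda_1,1)$, $(\lambda_2,1)$ by hand, whereas you package $g\mapsto\alpha(Lg)$ as an honest factorization function $\tilde\alpha$ (agreeing with it off the $O(q^{n-2})$-sized set $\{g:L\mid g\text{ or }L\mid g+\Delta/L\}$, with $\max(\tilde\alpha;n-1)\le\max(\alpha;n)$ once $q\ge n$) and then apply the covariance bound \eqref{monicest} as a black box --- a slightly tidier route to the same estimate, resting on the same deep input.
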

\begin{proof}
The same reasoning as in the proof of Lemma~\ref{alphasumcoplem} shows that
\begin{equation}\label{gcd1andhigher}
\begin{split}
\sum_{f \in \Mnq:\,\gcd(f,\Delta) \neq 1} \alpha(f) \overline{\beta}(f+\Delta) &= \sum_{a \in \FF_q: \Delta(a)=0} \sum_{g \in \mathcal{M}_{n-1,q}} \alpha((T-a)\cdot g) \overline{\beta}\Big((T-a) \cdot (g + \frac{\Delta}{T-a}) \Big) \\
&\qquad + O_{\mathrm{max} (\alpha;n),\mathrm{max} (\beta;n),\deg \Delta}(q^{n-2}).
\end{split}
\end{equation}
We fix $a \in \FF_q$ such that $\Delta(a)=0$.
Let $\lambda$ be a partition of ${n-1}$. We say that a polynomial $g$ of degree $n-1$ is of type $\lambda$ if it is squarefree and the degrees of its factors coincide with the parts of $\lambda$. For any factorization function $\gamma$, we let $\gamma(\lambda)$ be the common value of $\gamma$ on polynomials of type $\lambda$ (if there is no polynomial of type $\lambda$ in $\mathcal{M}_{n-1,q}$, we set $\gamma(\lambda)=0$). 

We introduce two probabilities: $\PP_{S_{n-1}}(\lambda)$ is the probability that a uniformly chosen element in $S_{n-1}$ has cycle structure given by $\lambda$, while $\PP_{\mathcal{M}_{n-1,q}}(\lambda)$ is the probability that a uniformly chosen element in $\mathcal{M}_{n-1,q}$ has type $\lambda$. It is well known that \cite[Lem.~2.1]{andrade2015}
\begin{equation}\label{equilamb}
\PP_{\mathcal{M}_{n-1,q}}(\lambda) = \PP_{S_{n-1}}(\lambda) + O_n\Big(\frac{1}{q}\Big).
\end{equation}
Andrade, Bary-Soroker, and Rudnick \cite[Thm.~1.4]{andrade2015} proved that for any partitions $\lambda_1,\lambda_2$ of $n-1$, we have
\begin{equation}\label{eq:indep}
\PP_{g \in \mathcal{M}_{n-1,q}} (g,g+\frac{\Delta}{T-a} \text{ are of type } \lambda_1,\lambda_2 ) = \PP_{S_{n-1}}(\lambda_1) \PP_{S_{n-1}}(\lambda_2) + O_n\Big(\frac{1}{\sqrt{q}}\Big).
\end{equation}
Denote by $(\lambda_i,1)$ the partition of $n$ obtained by adjoining to the partition $\lambda_i$ a part of size $1$. Let
\begin{equation*}
p_{q,\lambda_1,\lambda_2}:=\PP_{g \in \mathcal{M}_{n-1,q}}((T-a)g,(T-a)(g+\frac{\Delta}{T-a}) \text{ are of type } (\lambda_1,1),(\lambda_2,1) ).
\end{equation*}
We have
\begin{equation*}
p_{q,\lambda_1,\lambda_2} = p_{q,\lambda_1,\lambda_2,1} + p_{q,\lambda_1,\lambda_2,2},
\end{equation*}
where
\begin{equation*}
\begin{split}
p_{q,\lambda_1,\lambda_2,1}&=\PP_{g \in \mathcal{M}_{n-1,q}}((T-a)g,(T-a)(g+\frac{\Delta}{T-a}) \text{ are of type } (\lambda_1,1),(\lambda_2,1), \\
&\qquad \qquad \qquad \text{ and } g\text{ and } g+\frac{\Delta}{T-a}, \text{ are coprime with } T-a ),\\
p_{q,\lambda_1,\lambda_2,2}&=\PP_{g \in \mathcal{M}_{n-1,q}}((T-a)g,(T-a)(g+\frac{\Delta}{T-a}) \text{ are of type } (\lambda_1,1),(\lambda_2,1),\\ &\qquad \qquad \qquad \gcd(T-a, g (g+\frac{\Delta}{T-a})) \neq 1).
\end{split}
\end{equation*}
Since the probability that both $g$, $g+\frac{\Delta}{T-a}$ are coprime to $T-a$ is $1+O_n(\frac{1}{q})$, we have from \eqref{eq:indep}
\begin{equation*}
p_{q,\lambda_1,\lambda_2,2} = O_n\Big(\frac{1}{q}\Big),\quad  p_{q,\lambda_1,\lambda_2,1} = \PP_{S_{n-1}}(\lambda_1) \PP_{S_{n-1}}(\lambda_2) + O_n\Big(\frac{1}{\sqrt{q}}\Big),
\end{equation*}
and so
\begin{equation*}
p_{q,\lambda_1,\lambda_2} = \PP_{S_{n-1}}(\lambda_1) \PP_{S_{n-1}}(\lambda_2) + O_n\Big(\frac{1}{\sqrt{q}}\Big).
\end{equation*}
Since $\{ \PP_{S_{n-1}}(\lambda_1) \PP_{S_{n-1}}(\lambda_2) \}_{\lambda_1,\lambda_2 \vdash n-1}$ sum to $1$, the probabilities $p_{q,\lambda_1,\lambda_2}$ sum to $1+O_n\Big(\frac{1}{\sqrt{q}}\Big)$, and we obtain
\begin{equation}\label{ave2n1}
\begin{split}
\frac{1}{q^{n-1}}\sum_{g \in \mathcal{M}_{n-1,q}} \alpha((T-a)\cdot g) \overline{\beta}\Big((T-a) \cdot (g + \frac{\Delta}{T-a}) \Big) &= \sum_{\lambda_1,\lambda_2 \vdash n-1} \Big( \PP_{S_{n-1}}(\lambda_1) \PP_{S_{n-1}}(\lambda_2) + O_n(\frac{1}{\sqrt{q}}) \Big)\\
& \qquad \quad  \cdot \alpha((\lambda_1,1))\overline{\beta}((\lambda_2,1)) + O_{n,\mathrm{max} (\alpha;n),\mathrm{max} (\beta;n)}\Big(\frac{1}{\sqrt{q}}\Big) \\
&= \Big( \sum_{\lambda_1 \vdash n-1} \PP_{S_{n-1}}(\lambda_1) \alpha((\lambda_1,1)) \Big) \Big( \sum_{\lambda_2 \vdash n-1} \PP_{S_{n-1}}(\lambda_2) \overline{\beta}((\lambda_2,1)) \Big) \\
& \qquad + O_{n,\mathrm{max} (\alpha;n),\mathrm{max} (\beta;n)}\Big(\frac{1}{\sqrt{q}}\Big).
\end{split} 
\end{equation}
By \eqref{equilamb} and an argument similar to the above we have
\begin{equation}\label{ave1n1}
\frac{1}{q^{n-1}}\sum_{g \in \mathcal{M}_{n-1,q}} \alpha((T-a)\cdot g)  = \sum_{\lambda_1 \vdash n-1} \PP_{S_{n-1}}(\lambda_1) \alpha((\lambda_1,1)) + O_{n, \mathrm{max} (\alpha;n)}\Big(\frac{1}{q}\Big),
\end{equation}
and the same holds with $\beta$ in place of $\alpha$. From \eqref{gcd1andhigher}, \eqref{ave2n1} and \eqref{ave1n1}, we obtain \eqref{noncopiden} as needed.
\end{proof}

\begin{proposition}\label{propcordel}
Let $\alpha,\beta \colon \FF_q[T] \to \mathbb{C}$ be factorization functions. Let $n$ be a positive integer. Let $\Delta$ be a non-zero polynomial of degree $<n$ and $c \in \FF_q^{\times}$. Then
\begin{equation*}
\begin{split}
\sum_{c \in \FF_q^{\times}} \mathrm{Cov}_{f\in \mathcal{M}_{n,q,\Delta}}(\alpha(f), \beta(f+c\Delta)) &= q \cdot \mathrm{Cov}_{\mathcal{A}_q}(\alpha,\beta;n,\Delta) \\
&\qquad -a_{\Delta,q} \Big( \EE_{\Anq} \alpha - \EE_{f \in \mathcal{A}_{n-1,q}} \alpha(f \cdot T) \Big) \\
&\quad \qquad \cdot \Big( 
\EE_{\Anq} \overline{\beta} - \EE_{f \in \mathcal{A}_{n-1,q}} \overline{\beta}(f \cdot T) \Big)\\
&\qquad +  O_{n,\mathrm{max} (\alpha;n),\mathrm{max} (\beta;n)}\left( \frac{1}{\sqrt{q}} \right).
 \end{split} 
\end{equation*}
\end{proposition}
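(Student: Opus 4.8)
The plan is to expand the left-hand side into sums over $\Anq$ and over $\mathcal{M}_{n-1,q}$, using the averaging over the leading coefficient, and then collect terms. Throughout write $N=|\mathcal{M}_{n,q,\Delta}|=q^{n-\deg\Delta}\phi(\Delta)=q^{n}\phi(\Delta)/|\Delta|$, and abbreviate $x=\EE_{\Anq}\alpha$, $x'=\EE_{f\in\mathcal{A}_{n-1,q}}\alpha(f\cdot T)$, $y=\EE_{\Anq}\overline{\beta}$, $y'=\EE_{f\in\mathcal{A}_{n-1,q}}\overline{\beta}(f\cdot T)$; the target is $q\,\mathrm{Cov}_{\mathcal{A}_q}(\alpha,\beta;n,\Delta)-a_{\Delta,q}(x-x')(y-y')+O(1/\sqrt q)$. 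First, since the map $f\mapsto f+c\Delta$ is a bijection of $\mathcal{M}_{n,q,\Delta}$ onto itself (it fixes the property of being monic of degree $n$ and preserves $\gcd(\cdot,\Delta)$, as $\Delta$ and $c\Delta$ have the same prime divisors), both mean factors in $\mathrm{Cov}_{f\in\mathcal{M}_{n,q,\Delta}}(\alpha(f),\beta(f+c\Delta))$ are independent of $c$ and equal $\EE_{\mathcal{M}_{n,q,\Delta}}\alpha$ and $\EE_{\mathcal{M}_{n,q,\Delta}}\overline{\beta}$; hence the left-hand side equals $\frac1N\sum_{c\in\FF_q^{\times}}\sum_{f\in\mathcal{M}_{n,q,\Delta}}\alpha(f)\overline{\beta}(f+c\Delta)-(q-1)\,\EE_{\mathcal{M}_{n,q,\Delta}}\alpha\cdot\EE_{\mathcal{M}_{n,q,\Delta}}\overline{\beta}$.

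For the double sum I use the leading-coefficient decomposition: every $g\in\Anq$ is uniquely $cf$ with $c\in\FF_q^{\times}$ and $f\in\Mnq$, and by evenness $\alpha(g)=\alpha(f)$ and $\overline{\beta}(g+\Delta)=\overline{\beta}(c(f+c^{-1}\Delta))=\overline{\beta}(f+c^{-1}\Delta)$; re-indexing $c\mapsto c^{-1}$ gives $\sum_{c}\sum_{f\in\Mnq}\alpha(f)\overline{\beta}(f+c\Delta)=\sum_{g\in\Anq}\alpha(g)\overline{\beta}(g+\Delta)$, and the same decomposition restricted to the condition $\gcd(\cdot,\Delta)\ne1$ (which is preserved under $f\mapsto cf$) yields $\sum_{c}\sum_{f\in\mathcal{M}_{n,q,\Delta}}\alpha(f)\overline{\beta}(f+c\Delta)=\sum_{g\in\Anq}\alpha(g)\overline{\beta}(g+\Delta)-\sum_{c}\sum_{f\in\Mnq,\,\gcd(f,\Delta)\ne1}\alpha(f)\overline{\beta}(f+c\Delta)$. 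Dividing the first term by $N$ gives $(q-1)\frac{|\Delta|}{\phi(\Delta)}\big(\mathrm{Cov}_{\mathcal{A}_q}(\alpha,\beta;n,\Delta)+xy\big)$ (using $\EE_{f\in\Anq}\overline{\beta}(f+\Delta)=\EE_{\Anq}\overline{\beta}$, as $f\mapsto f+\Delta$ is a bijection of $\Anq$); by Lemma~\ref{lemphi}, $(q-1)\frac{|\Delta|}{\phi(\Delta)}=q-1+a_{\Delta,q}+O_n(1/q)$, and by \eqref{monicest2} we may then replace $(q-1+a_{\Delta,q})\mathrm{Cov}_{\mathcal{A}_q}$ by $q\,\mathrm{Cov}_{\mathcal{A}_q}$ at a cost of $O(1/\sqrt q)$, so this term is $q\,\mathrm{Cov}_{\mathcal{A}_q}(\alpha,\beta;n,\Delta)+(q-1+a_{\Delta,q})xy+O(1/\sqrt q)$. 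For the non-coprime term, note $a_{c\Delta,q}=a_{\Delta,q}$ and apply Lemma~\ref{lemcorbiggcd} to each $c$; using $\EE_{\mathcal{M}_{n-1,q}}\alpha(f\cdot T)=\EE_{\mathcal{A}_{n-1,q}}\alpha(f\cdot T)=x'$ (evenness), summing over $c$, dividing by $N$, and applying Lemma~\ref{lemphi} once more, this contributes $a_{\Delta,q}\,x'y'+O(1/\sqrt q)$ (the per-$c$ error $O(q^{n-3/2})$, summed over $q-1$ values and divided by $N$, is $O(1/\sqrt q)$).

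For the mean factors, write $\EE_{\mathcal{M}_{n,q,\Delta}}\alpha=\frac1N\big(\sum_{f\in\Mnq}\alpha(f)-\sum_{f\in\Mnq,\,\gcd(f,\Delta)\ne1}\alpha(f)\big)$; by evenness $\sum_{f\in\Mnq}\alpha(f)=q^{n}x$, and by Lemma~\ref{alphasumcoplem} the second sum is $a_{\Delta,q}q^{n-1}x'+O_n(q^{n-2})$, so Lemma~\ref{lemphi} gives $\EE_{\mathcal{M}_{n,q,\Delta}}\alpha=x+\frac{a_{\Delta,q}}{q}(x-x')+O_n(1/q^2)$, and likewise $\EE_{\mathcal{M}_{n,q,\Delta}}\overline{\beta}=y+\frac{a_{\Delta,q}}{q}(y-y')+O_n(1/q^2)$. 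Hence $(q-1)\EE_{\mathcal{M}_{n,q,\Delta}}\alpha\cdot\EE_{\mathcal{M}_{n,q,\Delta}}\overline{\beta}=(q-1)xy+a_{\Delta,q}\big((x-x')y+(y-y')x\big)+O_n(1/q)$. Substituting the three pieces back, the $(q-1)xy$ terms cancel, and the remaining $a_{\Delta,q}$-terms collapse by the identity $xy-x'y'-\big((x-x')y+(y-y')x\big)=-(x-x')(y-y')$ into $-a_{\Delta,q}(x-x')(y-y')$, which is exactly the claimed formula.

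The main obstacle is the error bookkeeping: several quantities are multiplied by $q$ or $q-1$, so the factors $|\Delta|/\phi(\Delta)$ must be expanded to second order via Lemma~\ref{lemphi} rather than the cruder \eqref{weakphi}, and --- crucially --- one cannot replace $(q-1+a_{\Delta,q})\mathrm{Cov}_{\mathcal{A}_q}$ by $q\,\mathrm{Cov}_{\mathcal{A}_q}$ without the Andrade, Bary-Soroker and Rudnick bound \eqref{monicest2} (the trivial bound $\mathrm{Cov}_{\mathcal{A}_q}=O(1)$ would leave an $O_n(1)$ error). Everything else is the elementary permutation-and-evenness manipulations together with the algebraic collapse above, and the only genuinely deep input is Lemma~\ref{lemcorbiggcd} (hence, ultimately, the result of Andrade, Bary-Soroker and Rudnick) used for the non-coprime correlation sum.
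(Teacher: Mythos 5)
Your proposal is correct, and it uses the same essential inputs as the paper (Lemma~\ref{lemphi}, Lemma~\ref{alphasumcoplem}, Lemma~\ref{lemcorbiggcd}, the bound \eqref{monicest2}, and the leading-coefficient permutation underlying \eqref{monicnonmonicrel}), assembled in a slightly different order: you expand the left-hand side once, do the inclusion-exclusion on $\gcd(f,\Delta)$ and the collapse $xy-x'y'-(x-x')y-(y-y')x=-(x-x')(y-y')$ at the end, whereas the paper derives a per-$c$ covariance identity (its \eqref{covdel3}) before summing over $c$ and invoking \eqref{monicnonmonicrel}. This is a cosmetic rearrangement of the same argument, and your error bookkeeping (in particular the need for the second-order expansion of $|\Delta|/\phi(\Delta)$ and for \eqref{monicest2} when replacing $(q-1+a_{\Delta,q})\mathrm{Cov}_{\mathcal{A}_q}$ by $q\,\mathrm{Cov}_{\mathcal{A}_q}$) matches the paper's.
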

\begin{proof}
Fix $c \in \FF_q^{\times}$. We have, from Lemmas~\ref{lemphi} and \ref{lemcorbiggcd},
\begin{equation}\label{covdel1}
\begin{split}
\EE_{f\in \mathcal{M}_{n,q,\Delta}} \alpha(f)\overline{\beta}(f+c\Delta) &=  \frac{|\Delta|}{\phi(\Delta)} \cdot \frac{\sum_{f\in \Mnq} \alpha(f) \overline{\beta}(f+c\Delta) - \sum_{f\in \Mnq, \gcd(f,\Delta) \neq 1} \alpha(f) \overline{\beta}(f+c\Delta)}{q^n}\\
&= (1+\frac{a_{\Delta,q}}{q} + O_n(q^{-2})) \Big( \EE_{f \in \Mnq} \alpha(f) \overline{\beta}(f+c\Delta)  \\
& \qquad \quad -\frac{a_{\Delta,q}}{q} \EE_{f \in \mathcal{M}_{n-1,q}} \alpha(f \cdot T) \cdot  \EE_{f \in \mathcal{M}_{n-1,q}} \overline{\beta}(f \cdot T) \Big) \\
& \qquad +  O_{\mathrm{max} (\alpha;n),\mathrm{max} (\beta;n),n}\left( q^{-\frac{3}{2}} \right)\\
&=  (1+\frac{a_{\Delta,q}}{q}) \EE_{f \in \Mnq} \alpha(f) \overline{\beta}(f+c\Delta) - \frac{a_{\Delta,q}}{q} \EE_{f \in \mathcal{M}_{n-1,q}} \alpha(f \cdot T) \cdot  \EE_{f \in \mathcal{M}_{n-1,q}} \overline{\beta}(f \cdot T)\\
&\qquad + O_{\mathrm{max} (\alpha;n),\mathrm{max} (\beta;n),n}\left( q^{-\frac{3}{2}} \right).
\end{split}
\end{equation}
Similarly, from Lemmas~\ref{lemphi} and \ref{alphasumcoplem} we have
\begin{equation}\label{covdel2}
\begin{split}
\EE_{\mathcal{M}_{n,q,\Delta}} \alpha  &=  (1+\frac{a_{\Delta,q}}{q})\EE_{\Mnq} \alpha - \frac{a_{\Delta,q}}{q} \EE_{f\in \mathcal{M}_{n-1,q}} \alpha(f \cdot T) +O_{\mathrm{max} (\alpha;n), n }(q^{-2}).
\end{split}
\end{equation}
From \eqref{covdel1} and \eqref{covdel2} we obtain
\begin{equation}\label{covdel3}
\begin{split}
\EE_{f\in \mathcal{M}_{n,q,\Delta}} \alpha(f) \overline{\beta}(f+c\Delta) - \EE_{\mathcal{M}_{n,q,\Delta}} \alpha \cdot \EE_{\mathcal{M}_{n,q,\Delta}} \overline{\beta}  &= (1+\frac{a_{\Delta,q}}{q}) \Big( \EE_{f\in \Mnq} \alpha(f) \overline{\beta}(f+c\Delta) - \EE_{\Mnq} \alpha \cdot \EE_{\Mnq}  \overline{\beta} \Big)\\
&\qquad -\frac{a_{\Delta,q}}{q} \Big( \EE_{\Mnq} \alpha - \EE_{f \in \mathcal{M}_{n-1,q}} \alpha(f \cdot T) \Big) \\
&\quad \qquad \cdot \Big( 
\EE_{\Mnq} \overline{\beta} - \EE_{f \in \mathcal{M}_{n-1,q}} \overline{\beta}(f \cdot T) \Big)\\
&\qquad +  O_{\mathrm{max} (\alpha;n),\mathrm{max} (\beta;n),n}\left( q^{-\frac{3}{2}} \right).
 \end{split} 
\end{equation}
Summing \eqref{covdel3} over $c \in \FF_q^{\times}$ and applying \eqref{monicnonmonicrel}, we have
\begin{equation}\label{covdel4}
\begin{split}
\sum_{c \in \FF_q^{\times}} \mathrm{Cov}_{f\in \mathcal{M}_{n,q,\Delta}}(\alpha(f), \beta(f+c\Delta)) &= (1+\frac{a_{\Delta,q}}{q}) (q-1) \mathrm{Cov}_{\mathcal{A}_q}(\alpha,\beta;n,\Delta) \\
&\qquad -a_{\Delta,q} \Big( \EE_{\Anq} \alpha - \EE_{f \in \mathcal{A}_{n-1,q}} \alpha(f \cdot T) \Big) \\
&\quad \qquad \cdot \Big( 
\EE_{\Anq} \overline{\beta} - \EE_{f \in \mathcal{A}_{n-1,q}} \overline{\beta}(f \cdot T) \Big)\\
&\qquad +  O_{\mathrm{max} (\alpha;n),\mathrm{max} (\beta;n),n}\left( \frac{1}{\sqrt{q}} \right).
 \end{split} 
\end{equation}
From \eqref{covdel4} and \eqref{monicest2} we conclude the proof of the proposition.
\end{proof}

\subsection{Additive decomposition of character sums, after Rodgers}\label{secrod}
The following results are generalizations of results of Rodgers \cite{rodgers2018}. Since the proofs are very similar, we refer to Rodgers' work when appropriate. 
\begin{lemma}\label{lemboundfact}
Let $\alpha \colon \FF_q[T] \to \mathbb{C}$ be a factorization function. Let $\chi$ be a Hayes character in $G(R_{\ell,M}) \setminus \{ \chi_0 \}$. 
\begin{enumerate}
\item We have
\begin{equation}\label{bndrodg1}
|S(n,\alpha,\chi)| =O_{\max(\alpha;n),n,\ell+\deg M}(q^{\frac{n}{2}}).
\end{equation}
\item If $\alpha$ is supported on non-squarefree polynomials and $\chi^2 \neq \chi_0$, then
\begin{equation}\label{bndrodg2}
S(n,\alpha,\chi) = O_{\max(\alpha;n),n,\ell+\deg M}(q^{\frac{n-1}{2}}).
\end{equation}
\end{enumerate}
\end{lemma}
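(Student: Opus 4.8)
The goal is to bound $S(n,\alpha,\chi)=\sum_{f\in\Mnq}\alpha(f)\chi(f)$ for a factorization function $\alpha$ and a nontrivial Hayes character $\chi\in G(R_{\ell,M})$. The key idea, following Rodgers, is to decompose $\alpha$ multiplicatively via the logarithm of its Dirichlet series. Since $\alpha$ restricted to polynomials is a factorization function, its value $\alpha(f)$ depends only on the extended factorization type $\lambda_f$; one writes $\alpha = \sum_{\lambda} \hat\alpha_\lambda \cdot (\text{something determined by }\lambda)$ and re-expresses each piece as a product over prime powers. Concretely, I would express $S(n,\alpha,\chi)$ as a finite linear combination (with coefficients depending only on $n$ and bounded in terms of $\max(\alpha;n)$) of sums of the shape
\begin{equation*}
\sum_{\substack{e_1,\dots,e_r\ge 1\\ \sum_i e_i n_i = n}} \prod_{i=1}^{r} \Big( \sum_{f_i\in\mathcal{M}_{n_i,q}} \Lambda_q^{(e_i)}(f_i)\,\chi(f_i) \Big)
\end{equation*}
or, more cleanly, in terms of coefficients of powers of $-\log L(u,\chi)$; each such factor is a sum over monic polynomials of a bounded (in $n$) number of von-Mangoldt-type weights twisted by $\chi$, and can be controlled using \eqref{vonmangbound} together with the Riemann Hypothesis bound \eqref{absweight1} for the inverse roots of $L(u,\chi)$.

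\textbf{Main steps.} First I would set up the generating-function identity: if $\alpha$ is a factorization function, then on polynomials of degree $n$ the quantity $\alpha(f)$ is a polynomial in the "prime-power indicator" variables, so $\sum_f \alpha(f)\chi(f)u^n$ can be written as a universal polynomial (depending on $n$) in the power sums $p_j(\chi) := \sum_{P\in\mathcal{P}_q}\sum_{k:\,kj\le n}(\deg P)\chi(P)^k u^{kj}$ — equivalently, in the coefficients of $-u\frac{d}{du}\log L(u,\chi)$. Here the combinatorial input from Rodgers is the precise relation between the Fourier coefficients $\hat\alpha_\lambda$ and this symmetric-function expansion. Second, using \eqref{ldecomproot}–\eqref{absweight1}, each coefficient of $-u\frac{d}{du}\log L(u,\chi)$ of degree $m\ge 1$ equals $-\sum_i \gamma_i(\chi)^m - a(\chi)$ (this is essentially \eqref{vonmangiden}), which has absolute value $O_{\ell+\deg M}(q^{m/2})$. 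Third, I would multiply out: a monomial in these coefficients contributing to degree $n$ is a product $\prod_j c_{m_j}$ with $\sum_j m_j = n$, hence of size $O_{\ell+\deg M,n}(q^{n/2})$; summing the boundedly-many monomials (the bound depending on $n$ and $\max(\alpha;n)$, since the universal polynomial's coefficients are controlled by the values of $\alpha$ on EFTs of "size" $n$) gives \eqref{bndrodg1}.

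\textbf{Part (2).} For the improvement when $\alpha$ is supported on non-squarefree polynomials and $\chi^2\ne\chi_0$: a non-squarefree $f$ has some prime $P$ with $P^2\mid f$, so in the symmetric-function expansion only monomials involving at least one power-sum $p_j$ with exponent-structure coming from a repeated prime can appear. The point is that the "square" part forces a factor of the form $\sum_{P}(\deg P)\chi(P)^2 u^{2\deg P}$ (coefficients of $\log L(u,\chi^2)$-type series), rather than $\chi(P)$; since $\chi^2\ne\chi_0$, the $L$-function $L(u,\chi^2)$ is again a polynomial whose inverse roots have modulus $\sqrt q$, but now this contributes a factor of size $O(q^{m/2})$ attached to a subsum of total degree at least $1$ \emph{with the remaining degree $\le n-1$ handled as before but with one fewer "free" unit of degree available at weight $1$} — more precisely, pulling out the square factor reduces the effective length by at least one and yields the saving to $q^{(n-1)/2}$. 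I would make this precise by isolating, in the universal polynomial, the sub-polynomial corresponding to EFTs with a repeated prime, and bounding its $\chi$-twisted evaluation by grouping one repeated-prime factor (handled via $L(u,\chi^d)$ for $d\ge2$) with the rest.

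\textbf{Expected main obstacle.} The routine analytic estimates are immediate from \eqref{absweight1}; the real work is the bookkeeping of Rodgers' combinatorial translation between factorization functions and the logarithmic/symmetric-function expansion, in particular verifying that the implied constants depend only on $n$, $\ell+\deg M$, and $\max(\alpha;n)$ — this requires checking that the universal polynomial expressing $S(n,\alpha,\chi)$ has coefficients bounded solely in terms of the restriction of $\alpha$ to EFTs of total degree $n$ (there are boundedly many such), which is where I would lean most heavily on \cite{rodgers2018}. For part (2), the subtlety is correctly identifying which monomials survive the non-squarefree support condition and checking that pulling out an $L(u,\chi^d)$-factor with $d\ge 2$ genuinely costs the needed power of $q^{1/2}$; here one must be careful that $\chi^2\ne\chi_0$ is used exactly to ensure $L(u,\chi^2)$ (and more generally $L(u,\chi^d)$ for the relevant $d$) has no pole, so that its coefficients obey the weight-$1$ bound rather than growing like $q^m$.
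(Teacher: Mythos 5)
The paper itself does not reprove this lemma: the proof given is a one-line citation to Rodgers (Lemmas~6.2 and 6.3 there, for the special case $\ell=0$, $M=T^k$), plus the observation that those arguments go through verbatim for general $\ell,M$ once \eqref{vonmangbound} is used in place of Rodgers' corresponding estimate. So your proposal — a reconstruction of Rodgers' argument — is doing more work than the paper, and that is fine, but it must then be held to the standard of a full proof.

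The reconstruction has a genuine gap in part~(2), in the role of the hypothesis $\chi^2\ne\chi_0$. You assert that $\chi^2\ne\chi_0$ ``ensures $L(u,\chi^2)$ (and more generally $L(u,\chi^d)$ for the relevant $d$) has no pole.'' The parenthetical is false: take $\chi$ of order $3$, so $\chi^2\ne\chi_0$ but $\chi^3=\chi_0$, and $L(u,\chi^3)$ has a pole at $u=1/q$. Such higher powers genuinely arise, because ``supported on non-squarefree polynomials'' allows $\alpha$ to be, say, the indicator of $f=P^3Q$ with $P,Q$ distinct primes, whence the prime sum $\sum_{\deg P = a}\chi(P)^3$ appears; if $\chi$ has order $3$ this sum is $\asymp q^{a}/a$ with no cancellation. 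The correct mechanism is a dichotomy, not uniform $L$-function cancellation. For a fixed EFT $\nu=\{(d_i,e_i)\}$ with $\sum_i d_ie_i=n$, the factor attached to $(d_i,e_i)$ is $\sum_{\deg P = d_i}\chi(P)^{e_i}$, which is $O(q^{d_i/2})$ if $\chi^{e_i}\ne\chi_0$ and $O(q^{d_i})$ otherwise. If $\chi^{e_i}=\chi_0$ with $\chi\ne\chi_0$ and $\chi^2\ne\chi_0$, then necessarily $e_i\ge 3$, so the exponent $d_i$ satisfies $d_i\le d_ie_i/2 - d_i/2\le d_ie_i/2 - 1/2$; if $\chi^{e_i}\ne\chi_0$ and $e_i\ge 2$, then the exponent $d_i/2\le d_ie_i/2 - 1/2$ as well. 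Either way, the single index $j$ with $e_j\ge 2$ that must exist forces the total exponent down to $\le(n-1)/2$. Your argument captures only the first branch of the dichotomy.

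There is also a smaller imprecision in part~(1): you propose to control $S(n,\alpha,\chi)$ via the coefficients $c_m$ of $-u\frac{d}{du}\log L(u,\chi)$, each of which is indeed $O_{\ell+\deg M}(q^{m/2})$ by \eqref{vonmangiden}--\eqref{absweight1}. But $c_m=\sum_{de=m} d\sum_{\deg P=d}\chi(P)^e$ lumps together all pairs $(d,e)$ with $de=m$, whereas a factorization function (e.g.\ the indicator of $f=P^2$) picks out a single such pair. One cannot in general recover $\sum_{\deg P=d}\chi(P)^e$ as a polynomial in the $c_m$, so the decomposition must be done at the level of the individual prime-power sums. The conclusion of part~(1) is unharmed, because the bound $\sum_{\deg P=d}\chi(P)^e=O(q^{d/2})$ for $\chi^e\ne\chi_0$ and $O(q^d)$ for $\chi^e=\chi_0$ (which forces $e\ge 2$, hence $d\le n/2$) still multiplies out to $O(q^{n/2})$; but the object you should be manipulating is $\sum_{\deg P = d}\chi(P)^e$, or equivalently the coefficients of $\log L(u,\chi^e)$ for each $e$, not of $\log L(u,\chi)$ alone.
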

\begin{proof}
In the special case $\ell=0$, $M=T^k$, the bounds \eqref{bndrodg1} and \eqref{bndrodg2} were proved in \cite[Lem.~6.3]{rodgers2018}, \cite[Lem.~6.2]{rodgers2018} respectively. The proofs work for any $\ell$ and $M$. The dependence on $\ell+\deg M$ comes from \eqref{vonmangbound}, which generalizes \cite[Eq.~(26)]{rodgers2018}.
\end{proof}
The next lemma requires some notation and definitions. For any partition $\lambda=(\lambda_1,\ldots,\lambda_k)$ of $n$, let $\chi_{\lambda}$ be the irreducible character of $S_n$ associated with $\lambda$. Let $\chi_{\lambda,q} \colon \FF_q[T] \to \mathbb{C}$ be the factorization function defined as follows (cf. \cite[\S2B]{rodgers2018}):
\begin{equation*}
\chi_{\lambda,q}(f) = \begin{cases} \chi_{\lambda}(\pi_f) & \mbox{if $f$ is squarefree of degree $n$,} \\ 0 & \mbox{otherwise}, \end{cases}
\end{equation*}
where $\pi_f$ is any element of $S_n$ in the conjugacy class associated with $f$. We denote by $s_{\lambda}$ the usual Schur function, and recall that its definition may be extended to unitary matrices, by evaluating $s_{\lambda}$ at the corresponding eigenvalues (cf. \cite[\S5B, \S5D]{rodgers2018}). 
We let $\lambda'$ denote the partition of $n$ conjugate to $\lambda$. 
\begin{lemma}\label{lemschur}
Let $\chi \in G(R_{\ell,M})$ such that $\chi^2 \neq \chi_0$. Let $\lambda \vdash n$. Then
\begin{equation}\label{mtsumschur}
S(n,\chi_{\lambda,q}, \chi) = q^{\frac{n}{2}} (-1)^n s_{\lambda'}(\Theta_{\chi})+O_{n,\ell+\deg M}(q^{\frac{n-1}{2}}).
\end{equation}
Moreover, if $\alpha_q$ is a factorization function which comes from $\alpha$, then
\begin{equation}\label{mtsumgen}
S(n,\alpha_q,\chi) = q^{\frac{n}{2}}\sum_{\lambda \vdash n}  \hat{\alpha}_{\lambda} s_{\lambda'}(\Theta_{\chi})+O_{\max(\alpha_q;n),n,\ell+\deg M}(q^{\frac{n-1}{2}}).
\end{equation}
\end{lemma}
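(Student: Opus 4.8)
The plan is to follow Rodgers \cite{rodgers2018}, who proves the analogues of \eqref{mtsumschur} and \eqref{mtsumgen} for Dirichlet characters modulo $T^k$ (the case $\ell=0$, $M=T^k$); his argument uses the specific modulus only through the bound $\dim\Theta_{\psi}\le\ell+\deg M-1$ valid for all Hayes characters $\psi$ modulo $R_{\ell,M}$ and through the Riemann Hypothesis \eqref{absweight1} (equivalently \eqref{vonmangiden}) for such $\psi$, so it carries over with the stated dependence on $\ell+\deg M$. The first step is to reduce \eqref{mtsumgen} to \eqref{mtsumschur}. By the definition \eqref{deffourier} of the Fourier coefficients, $\alpha_q(f)=\sum_{\lambda\vdash n}\hat{\alpha}_{\lambda}\chi_{\lambda,q}(f)$ for every squarefree $f\in\Mnq$, while each $\chi_{\lambda,q}$ vanishes off the squarefree locus. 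Hence the factorization function agreeing with $\alpha_q-\sum_{\lambda}\hat{\alpha}_{\lambda}\chi_{\lambda,q}$ on polynomials of degree $n$ and vanishing elsewhere is supported on non-squarefree polynomials and bounded pointwise by $O_{n,\max(\alpha_q;n)}(1)$ (using $|\hat{\alpha}_{\lambda}|\le\chi_{\lambda}(1)\max(\alpha_q;n)$ and $|\chi_{\lambda}(\pi_f)|\le\chi_{\lambda}(1)$). Since $\chi^2\neq\chi_0$, part~(2) of Lemma~\ref{lemboundfact} bounds its value under $S(n,\cdot,\chi)$ by $O_{\max(\alpha_q;n),n,\ell+\deg M}(q^{(n-1)/2})$, so \eqref{mtsumgen} follows from \eqref{mtsumschur} by linearity.

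To prove \eqref{mtsumschur} I would pass to symmetric functions. If $f\in\Mnq$ is squarefree with irreducible factors of degrees $d_1,\dots,d_r$, then $\pi_f$ has cycle type $\rho(f):=(d_1,\dots,d_r)\vdash n$, so the Frobenius formula $\langle s_{\lambda},p_{\mu}\rangle=\chi_{\lambda}(\mu)$ (Hall inner product) gives $\chi_{\lambda}(\pi_f)=\langle s_{\lambda},p_{\rho(f)}\rangle$, whence $S(n,\chi_{\lambda,q},\chi)=\langle s_{\lambda},H_n\rangle$, where $H_n:=\sum_f\chi(f)\,p_{\rho(f)}$ with $f$ running over the squarefree polynomials in $\Mnq$. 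The generating function is the Euler product
\begin{equation*}
\sum_{n\ge0}H_nu^n=\prod_{P\in\mathcal{P}_q}\big(1+\chi(P)\,p_{\deg P}\,u^{\deg P}\big)
\end{equation*}
in the completed ring of symmetric functions over $\mathbb{C}[[u]]$, so taking logarithms and grouping primes by degree,
\begin{equation*}
\log\Big(\sum_{n\ge0}H_nu^n\Big)=\sum_{m\ge1}\frac{(-1)^{m-1}}{m}\sum_{d\ge1}p_d^m\,u^{md}\sum_{\substack{P\in\mathcal{P}_q\\\deg P=d}}\chi(P)^m.
\end{equation*}
Möbius inversion of \eqref{vonmangiden} applied to the characters $\chi^m\in G(R_{\ell,M})$, together with \eqref{absweight1} when $\chi^m\ne\chi_0$ and the Prime Number Theorem $\sum_{f\in\mathcal{M}_{d,q}}\Lambda_q(f)=q^d$ when $\chi^m=\chi_0$, yields $\sum_{\deg P=d}\chi(P)=-\tfrac{q^{d/2}}{d}\,\mathrm{Tr}(\Theta_{\chi}^d)+O_{n,\ell+\deg M}(q^{(d-1)/2})$ and shows that every other contribution to the $u^k$-coefficient above is a symmetric function with power-sum coefficients $O_{n,\ell+\deg M}(q^{(k-1)/2})$; this is exactly where $\chi\ne\chi_0$ and $\chi^2\ne\chi_0$ are needed, since the only contribution that would otherwise be of size $q^{k/2}$ is the $m=2$, $\chi^2=\chi_0$ term.

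Now write $\log(\sum_nH_nu^n)=A(u)+B(u)$ with $A(u):=-\sum_{d\ge1}\tfrac{p_d}{d}q^{d/2}\mathrm{Tr}(\Theta_{\chi}^d)\,u^d$ and $B$ the rest, so $B(0)=0$ and $[u^k]B$ has power-sum coefficients $O_{n,\ell+\deg M}(q^{(k-1)/2})$ for $k\ge1$. Writing $\theta_1,\dots,\theta_N$ ($N=\dim\Theta_{\chi}$) for the eigenvalues of $\Theta_{\chi}$, and using the identity $\exp(-\sum_d\tfrac{p_d}{d}t^d)=\prod_i(1-x_it)$,
\begin{align*}
e^{A(u)}&=\prod_{j=1}^{N}\exp\Big(-\sum_{d\ge1}\tfrac{p_d}{d}(\sqrt{q}\,\theta_ju)^d\Big)=\prod_{i,j}\big(1-x_i\sqrt{q}\,\theta_ju\big)\\
&=\sum_{\lambda}s_{\lambda}\,s_{\lambda'}\!\big(-\sqrt{q}\,u\,\Theta_{\chi}\big),
\end{align*}
the last step being the dual Cauchy identity (here $s_{\lambda'}(-\sqrt{q}\,u\,\Theta_{\chi})$ means $s_{\lambda'}$ evaluated at $-\sqrt{q}\,u\,\theta_1,\dots,-\sqrt{q}\,u\,\theta_N$, and $i$ runs over the symmetric-function variables). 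Since $s_{\lambda'}$ is homogeneous of degree $|\lambda|$, the coefficient of $u^n$ is $(-1)^nq^{n/2}\sum_{\lambda\vdash n}s_{\lambda}\,s_{\lambda'}(\Theta_{\chi})$, which pairs with $s_{\lambda}$ to give exactly $(-1)^nq^{n/2}s_{\lambda'}(\Theta_{\chi})$. Finally, $[u^k]e^{A(u)}$ has power-sum coefficients $O_{n,\ell+\deg M}(q^{k/2})$, while $[u^k]e^{B(u)}$ has power-sum coefficients $O_{n,\ell+\deg M}(q^{(k-1)/2})$ for $k\ge1$ (and equals $1$ for $k=0$), so in $\sum_nH_nu^n=e^{A(u)}e^{B(u)}$ the contribution of $B$ to $[u^n]$ pairs with $s_{\lambda}$ to $O_{n,\ell+\deg M}(q^{(n-1)/2})$ (power-sum coefficients controlling the pairing with $s_{\lambda}$ up to constants depending on $n$). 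This proves \eqref{mtsumschur}, hence \eqref{mtsumgen}.

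I expect the main obstacle to be the analysis in the second step: the Möbius inversion producing $\sum_{\deg P=d}\chi(P)=-\tfrac{q^{d/2}}{d}\mathrm{Tr}(\Theta_{\chi}^d)+O(q^{(d-1)/2})$, and the verification that no secondary term survives exponentiation at size $q^{n/2}$. The dichotomy ``$\chi^2\ne\chi_0$ versus $\chi^2=\chi_0$'' is precisely where this succeeds or fails: for a quadratic $\chi$ the $m=2$ term involves $\sum_{\deg P=d}\chi_0(P)\sim q^d/d$, which for even $d$ has the same order $q^{d/2}$ as the main term. Everything else is formal bookkeeping with symmetric functions, which is why, as the excerpt notes, the lemma is a mild generalization of Rodgers \cite{rodgers2018}; passing from the modulus $T^k$ to a general relation $R_{\ell,M}$ only changes the implied constants, via $\dim\Theta_{\psi}\le\ell+\deg M-1$ and the Riemann Hypothesis \eqref{absweight1} for general Hayes characters $\psi$, both of which are available.
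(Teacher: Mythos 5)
Your proposal is correct and follows essentially the same route as the paper: the paper simply cites Rodgers \cite{rodgers2018} for \eqref{mtsumschur} (noting that the argument is insensitive to the modulus once one has \eqref{absweight1} and the degree bound on $L(u,\chi)$), and deduces \eqref{mtsumgen} by linearity over the basis \eqref{basisfacts} of $\{\chi_\lambda\}_{\lambda\vdash n}$ together with indicators of non-squarefree extended factorization types, using \eqref{bndrodg2} for the latter; your direct decomposition of $\alpha_q$ into $\sum_\lambda\hat\alpha_\lambda\chi_{\lambda,q}$ plus a non-squarefree-supported remainder is the same argument, and your self-contained symmetric-function derivation of \eqref{mtsumschur} is a faithful reconstruction of Rodgers' proof. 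One small point: carried out carefully, the linearity step yields $S(n,\alpha_q,\chi)=(-1)^nq^{n/2}\sum_\lambda\hat\alpha_\lambda s_{\lambda'}(\Theta_\chi)+O(\cdot)$, i.e.\ with the same $(-1)^n$ as in \eqref{mtsumschur}; the absence of this factor in the printed \eqref{mtsumgen} appears to be a typo in the paper (harmless in all applications, which only use $S(n,\alpha_q,\chi)\overline{S(n,\beta_q,\chi)}$, where the sign cancels).
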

\begin{proof}
In the special case $\ell=0,M=T^k$, the estimate \eqref{mtsumschur} was proven in \cite[Thm.~7.1]{rodgers2018}. The proof works as is for any $\ell$ and $M$.

Let $\mathrm{EFT}_n \subseteq \mathrm{EFT}$  consist of those elements of $\mathrm{EFT}$ that correspond to polynomials of degree $n$. To prove \eqref{mtsumgen}, note that both $S(n,\alpha_q,\chi)$ and $\sum_{\lambda \vdash n}  \hat{\alpha}_{\lambda} s_{\lambda'}(\Theta_{\chi})$ depend linearly on $\alpha|_{\mathrm{EFT}_n}$, and so it suffices to prove \eqref{mtsumgen} for a set of $\alpha$-s whose restriction to $\mathrm{EFT}_n$ spans the vector space of functions $\mathrm{EFT}_n \to \mathbb{C}$. Such a set is given by 
\begin{equation}\label{basisfacts}
\{ \chi_{\lambda} \}_{\lambda \vdash n} \cup \{ \indic_{\nu} : \nu \in \mathrm{EFT}_n, \, \mbox{$\nu$ corresponds to non-squarefree polynomials}\}.
\end{equation}
We immediately verify \eqref{mtsumgen} for these functions using \eqref{mtsumschur} and \eqref{bndrodg2}.
\end{proof}

\begin{lemma}\label{lemquadchars}
The number of character $\chi \in G(R_{\ell,M})$ such that $\chi^2 = \chi_0$ is $O_{\deg M}(1)$ if $q$ is odd, and $O(q^{\lfloor \frac{\ell + 1}{2} \rfloor + \lfloor \frac{\deg M}{2} \rfloor })$ if $q$ is even.
\end{lemma}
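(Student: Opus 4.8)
The plan is to reduce the count to the size of a $2$-torsion subgroup, which then splits over the local factors of $R_{\ell,M}$. First I would observe that $\{\chi\in G(R_{\ell,M}):\chi^2=\chi_0\}$ is precisely the $2$-torsion of the character group $\widehat{A}$ of $A:=\left(\mathcal{M}_q/R_{\ell,M}\right)^{\times}$: the condition $\chi^2=\chi_0$ says exactly that $\chi$ is trivial on $2A$, so $\widehat{A}[2]=\widehat{A/2A}$, and hence $|\widehat{A}[2]|=|A/2A|=|A[2]|$ for the finite abelian group $A$. Thus it suffices to bound $|A[2]|$. Using \eqref{structunit}, the Chinese Remainder Theorem, and the standard splitting $(\FF_q[T]/P^e)^{\times}\cong(\FF_q[T]/P)^{\times}\times\big((1+P\FF_q[T])/(1+P^e\FF_q[T])\big)$, one gets (writing $M=\prod_i P_i^{e_i}$ with $P_i\in\mathcal{P}_q$ distinct, which we may assume monic since $R_{\ell,M}$ depends only on the ideal $(M)$)
\[
A\ \cong\ \frac{1+T\FF_q[T]}{1+T^{\ell+1}\FF_q[T]}\ \oplus\ \bigoplus_i(\FF_q[T]/P_i)^{\times}\ \oplus\ \bigoplus_i\frac{1+P_i\FF_q[T]}{1+P_i^{e_i}\FF_q[T]},
\]
so $A[2]$ is the direct sum of the $2$-torsions of these summands and it is enough to treat each.

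Next I would split on the parity of $q$. If $q$ is odd, then each group $(1+T\FF_q[T])/(1+T^{\ell+1}\FF_q[T])$ and $(1+P\FF_q[T])/(1+P^e\FF_q[T])$ is a finite $p$-group with $p=\mathrm{char}(\FF_q)$ odd, so it has trivial $2$-torsion; and each $(\FF_q[T]/P)^{\times}$ is cyclic of even order $|P|-1$, contributing a $2$-torsion subgroup of order exactly $2$. Hence $|A[2]|=2^{\omega(M)}$, where $\omega(M)\le\deg M$ is the number of distinct prime divisors of $M$, so $|A[2]|=O_{\deg M}(1)$.

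If $q$ is even, i.e.\ $\mathrm{char}(\FF_q)=2$, then each $(\FF_q[T]/P)^{\times}$ is cyclic of odd order and contributes nothing. For the principal-unit factors I would compute directly: an element $\equiv 1\bmod P$ in $\FF_q[T]/P^e\FF_q[T]$ has the form $1+Pg$ with $g$ well-defined modulo $P^{e-1}$, and in characteristic $2$, $(1+Pg)^2=1+P^2g^2$. Thus $(1+Pg)^2=1$ iff $P^{e-2}\mid g^2$ iff $v_P(g)\ge\lceil(e-2)/2\rceil$, which holds for exactly $|P|^{(e-1)-\lceil(e-2)/2\rceil}=|P|^{\lfloor e/2\rfloor}$ residues $g$ modulo $P^{e-1}$. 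The short-interval factor is the case $P=T$, $e=\ell+1$, contributing $q^{\lfloor(\ell+1)/2\rfloor}$. Multiplying, $|A[2]|=q^{\lfloor(\ell+1)/2\rfloor}\prod_i|P_i|^{\lfloor e_i/2\rfloor}=q^{\lfloor(\ell+1)/2\rfloor+\sum_i\deg P_i\lfloor e_i/2\rfloor}$, and since $\sum_i\deg P_i\lfloor e_i/2\rfloor$ is a non-negative integer bounded by $\tfrac12\sum_i e_i\deg P_i=\tfrac12\deg M$, it is $\le\lfloor\deg M/2\rfloor$. This gives $|A[2]|\le q^{\lfloor(\ell+1)/2\rfloor+\lfloor\deg M/2\rfloor}$.

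The only delicate point, and hence the main obstacle, is the characteristic-$2$ bookkeeping: one must recognize the short-interval group $(1+T\FF_q[T])/(1+T^{\ell+1}\FF_q[T])$ as a local principal-unit group at $T$ with exponent $\ell+1$, and one must replace the crude exponent bound $\tfrac12\deg M$ by the floor $\lfloor\deg M/2\rfloor$, which is valid only because the sum $\sum_i\deg P_i\lfloor e_i/2\rfloor$ is an integer. Everything else is routine structure theory of $(\FF_q[T]/M)^{\times}$.
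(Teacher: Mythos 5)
Your proof is correct and reaches the same bounds, but it organizes the computation differently from the paper. Both arguments begin with the same reduction: for a finite abelian group $A=(\mathcal{M}_q/R_{\ell,M})^{\times}$, the number of $\chi$ with $\chi^2=\chi_0$ equals $|A[2]|$ (you phrase it as $|\widehat A[2]|=|A[2]|$; the paper invokes $G(R_{\ell,M})\cong (\mathcal{M}_q/R_{\ell,M})^{\times}$ and works directly with $A[2]$). From there the paper handles the two ``pure'' cases $\ell=0$ and $M=1$ separately: for $\ell=0$ it counts solutions of $x^2\equiv 1\bmod M$ directly, and in even characteristic it passes to a global factorization $M=M_0M_1^2$; for $M=1$ it does a short computation in $(1+T\FF_q[T])/(1+T^{\ell+1}\FF_q[T])$; the general case is then assembled via $G(R_{\ell,M})\cong G(R_{0,M})\oplus G(R_{\ell,1})$. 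You instead decompose $A$ all the way down into local pieces — one residue-field factor $(\FF_q[T]/P_i)^{\times}$ and one principal-unit factor per prime $P_i\mid M$, plus the short-interval factor viewed as a principal-unit group at $T$ with exponent $\ell+1$ — and compute the $2$-torsion of each piece uniformly. This local treatment is a bit cleaner: the characteristic-$2$ computation $(1+Pg)^2=1+P^2g^2$ is done once and applied to every prime (including $T$), and the exponent bookkeeping $\sum_i\deg P_i\lfloor e_i/2\rfloor\le\lfloor\deg M/2\rfloor$ follows from integrality of the left side. It also sidesteps a small wrinkle in the paper: the stated decomposition ``$M=M_0M_1^2$ with $M_0,M_1$ coprime and $M_0$ squarefree'' does not literally exist when $M$ has a prime factor of odd exponent $\geq 3$ (e.g.\ $M=P^3$); the intended condition is the local one $v_{P_i}(x+1)\ge\lceil e_i/2\rceil$ for each $i$, which your per-prime argument produces directly. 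In the odd-characteristic case you also get the exact count $2^{\omega(M)}$ rather than the paper's cruder bound by the number of monic divisors, though both are $O_{\deg M}(1)$.
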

\begin{proof}
We start with the case $\ell=0$. From \eqref{structunit}, the 2-torsion in $G(R_{0,M})\cong R_{0,M}$ corresponds to solutions of 
\begin{equation}\label{quadcong}
x^2 \equiv 1 \bmod {M} , \quad \deg x < \deg M.
\end{equation}
If $q$ is odd, each such $x$ defines a unique monic divisor of $M$ via $\gcd(x-1,M)$. By \eqref{eq2phi}, the number of monic divisors of $M=\prod_{i=1}^{m} P_i^{e_i}$ is bounded by $O_{\deg M}(1)$, as needed. If $q$ is even, \eqref{quadcong} becomes $M \mid (x+1)^2$. We may write $M$ as $M=M_0 \cdot M_1^2$ where $M_0$, $M_1$ are coprime and $M_0$ squarefree. The condition $M \mid (x+1)^2$ becomes $M_0 M_1 \mid x+1$, which bounds the size of the 2-torsion by $O(q^{\deg M_1}) = O(q^{\lfloor \frac{\deg M}{2}\rfloor })$.

We now proceed to prove the lemma in the case $M=1$. By \eqref{structunit}, $G(R_{\ell,1}) \cong R_{\ell,1} \cong (1+T\FF_q[T])/(1+T^{\ell+1}\FF_q[T])$. In particular, $R_{\ell,1}$ is of order $q^{\ell}$. If $q$ is odd, the 2-torsion is trivial. If $q$ is even, the 2-torsion corresponds to polynomials of the form
\begin{equation*}
1+T\cdot f, \quad \deg f < \ell,
\end{equation*}
such that
\begin{equation*}
T^{\ell+1} \mid (1+T\cdot f)^2 - 1. 
\end{equation*}
Since
\begin{equation*}
(1+T\cdot f)^2 - 1 = T^2 f^2,
\end{equation*}
we actually count polynomials $f$ of degree $<\ell$ such that $T^{\lceil \frac{\ell-1}{2} \rceil} \mid f$, and their number is $q^{\lfloor \frac{\ell+1}{2} \rfloor}$.

The isomorphism
\begin{equation*}
G(R_{\ell,M}) \cong G(R_{0,M})  \oplus G(R_{\ell,1}).
\end{equation*}
proves the case of general $\ell$ and $M$ from the last two cases.
\end{proof}
We have the following proposition, which relies crucially on the equidistribution result Theorem~\ref{thmequigenarith}.
\begin{proposition}\label{avchiviaschur}
Let $\alpha_q,\beta_q$ be factorization functions which come from $\alpha,\beta$. Let $(\ell,M)\in \mathbb{N} \times \mathcal{A}_q$ where $M$ is squarefree and either $\ell \ge 4$ or $\ell=0$ and $\deg M \ge 2$. Then
\begin{equation}\label{lemcovsum}
\begin{split}
  \frac{\sum_{\chi \in G(R_{\ell,M})\setminus \{ \chi_0\}} q^{-n} S(n,\alpha_q,\chi)\overline{S(n,\beta_q,\chi)} }{|G(R_{\ell,M})|} &= \sum_{\substack{\lambda \vdash n \\ \lambda_1 \le \ell+\deg M -1}} \hat{\alpha}_{\lambda} \overline{\hat{\beta}_{\lambda}} \\
 & \qquad + O_{\max(\alpha_q;n),\max(\beta_q;n),n,\ell,\deg M}\Big(\frac{1}{\sqrt{q}}\Big).
 \end{split}
\end{equation}
\end{proposition}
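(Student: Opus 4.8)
The plan is to substitute the Schur-function expansion \eqref{mtsumgen} for $S(n,\alpha_q,\chi)$ and $\overline{S(n,\beta_q,\chi)}$ into the numerator on the left of \eqref{lemcovsum} and then average the resulting expression over $\chi$, using the equidistribution result Theorem~\ref{thmequigenarith} together with the orthonormality of Schur functions on the unitary group; this is the analogue, for correlation sums, of Rodgers' variance computation. The expansion \eqref{mtsumgen} applies only when $\chi^2\neq\chi_0$, and even then $\Theta_{\chi}$ lies in $\mathrm{U}(\ell+\deg M-1)$ only when $\chi$ is primitive and odd, so the first step is to peel off the exceptional characters and bound them trivially. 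For every $\chi_0\neq\chi\in G(R_{\ell,M})$ we have $q^{-n}|S(n,\alpha_q,\chi)\overline{S(n,\beta_q,\chi)}|=O_{\max(\alpha_q;n),\max(\beta_q;n),n,\ell+\deg M}(1)$ by \eqref{bndrodg1}. By Lemma~\ref{lemquadchars} the number of $\chi$ with $\chi^2=\chi_0$ is $O_{\deg M}(1)$ if $q$ is odd and $O(q^{\lfloor(\ell+1)/2\rfloor+\lfloor\deg M/2\rfloor})$ if $q$ is even, while $|G(R_{\ell,M})|=q^{\ell}\phi(M)\gg_{\deg M}q^{\ell+\deg M}$; since the hypotheses force $\ell+\deg M\ge 2$, these contribute $O(q^{-(\ell+\deg M-1)/2})=O(1/\sqrt q)$ to the normalized average. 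The number of $\chi$ that are non-primitive, or even (possible only when $\ell=0$), is $O_{\ell,\deg M}(|G(R_{\ell,M})|/q)$ by the bound preceding \eqref{eq:number of chars}, so these contribute $O(1/q)$. Combining this with \eqref{eq:number of chars}, it remains to average over the primitive, odd, non-quadratic characters, of which there are $|G(R_{\ell,M})|(1+O(1/\sqrt q))$.

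For such $\chi$ we have $a(\chi)=0$ and $\Theta_{\chi}\in\mathrm{U}(N)$ with $N:=\ell+\deg M-1$. Writing $A_{\chi}:=\sum_{\lambda\vdash n}\hat{\alpha}_{\lambda}s_{\lambda'}(\Theta_{\chi})$ and $B_{\chi}:=\sum_{\mu\vdash n}\hat{\beta}_{\mu}s_{\mu'}(\Theta_{\chi})$, formula \eqref{mtsumgen} gives $q^{-n/2}S(n,\alpha_q,\chi)=A_{\chi}+O(q^{-1/2})$ and likewise for $\beta_q$, and \eqref{bndrodg1} shows $|A_{\chi}|,|B_{\chi}|=O_{\max(\alpha_q;n),\max(\beta_q;n),n,\ell+\deg M}(1)$; hence, uniformly in $\chi$,
\begin{equation*}
q^{-n}S(n,\alpha_q,\chi)\overline{S(n,\beta_q,\chi)}=A_{\chi}\overline{B_{\chi}}+O_{\max(\alpha_q;n),\max(\beta_q;n),n,\ell,\deg M}\Big(\frac{1}{\sqrt q}\Big).
\end{equation*}
Since $s_{\lambda'}$ is a Schur polynomial in $N$ variables with $\lambda_1$ rows, $s_{\lambda'}(\Theta_{\chi})$ vanishes identically unless $\lambda_1\le N$ — this is the source of the constraint $\lambda_1\le\ell+\deg M-1$ in \eqref{lemcovsum} — and for such $\lambda,\mu$ the function $g\mapsto s_{\lambda'}(g)\overline{s_{\mu'}(g)}$ is a class function on $\mathrm{U}(N)$ of bounded complexity. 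Theorem~\ref{thmequigenarith} therefore yields
\begin{equation*}
\frac{1}{\#\{\chi:\ \chi\ \text{primitive, odd},\ \chi^2\neq\chi_0\}}\sum_{\chi}s_{\lambda'}(\Theta_{\chi})\overline{s_{\mu'}(\Theta_{\chi})}=\int_{\mathrm{U}(N)}s_{\lambda'}(g)\overline{s_{\mu'}(g)}\,dg+O_{n,\ell,\deg M}\Big(\frac{1}{\sqrt q}\Big),
\end{equation*}
and the Haar integral equals $\delta_{\lambda'\mu'}=\delta_{\lambda\mu}$ by the orthonormality of the irreducible characters of $\mathrm{U}(N)$.

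Assembling: the exceptional characters contribute $O(1/\sqrt q)$; over the primitive odd non-quadratic characters I would expand $A_{\chi}\overline{B_{\chi}}=\sum_{\lambda,\mu}\hat{\alpha}_{\lambda}\overline{\hat{\beta}_{\mu}}\,s_{\lambda'}(\Theta_{\chi})\overline{s_{\mu'}(\Theta_{\chi})}$, interchange the finite sum over $\lambda,\mu$ with the average over $\chi$ (using $|\hat{\alpha}_{\lambda}|,|\hat{\beta}_{\mu}|=O_{\max(\alpha_q;n),\max(\beta_q;n),n}(1)$ so that the equidistribution error stays $O(1/\sqrt q)$), and finally replace $\#\{\chi:\ \chi\ \text{primitive, odd},\ \chi^2\neq\chi_0\}^{-1}$ by $|G(R_{\ell,M})|^{-1}$ at a relative cost $O(1/\sqrt q)$. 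This gives
\begin{equation*}
\frac{\sum_{\chi\in G(R_{\ell,M})\setminus\{\chi_0\}}q^{-n}S(n,\alpha_q,\chi)\overline{S(n,\beta_q,\chi)}}{|G(R_{\ell,M})|}=\sum_{\substack{\lambda,\mu\vdash n\\ \lambda_1,\mu_1\le N}}\hat{\alpha}_{\lambda}\overline{\hat{\beta}_{\mu}}\,\delta_{\lambda\mu}+O\Big(\frac{1}{\sqrt q}\Big)=\sum_{\substack{\lambda\vdash n\\ \lambda_1\le\ell+\deg M-1}}\hat{\alpha}_{\lambda}\overline{\hat{\beta}_{\lambda}}+O\Big(\frac{1}{\sqrt q}\Big),
\end{equation*}
which is \eqref{lemcovsum}.

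The only genuinely deep step is the equidistribution input: replacing the average of $s_{\lambda'}(\Theta_{\chi})\overline{s_{\mu'}(\Theta_{\chi})}$ over the characters by its Haar integral over $\mathrm{U}(N)$ is the crux, and it is exactly here that the squarefreeness of $M$ and the restriction ``$\ell\ge4$ or $\ell=0,\ \deg M\ge2$'' are needed (through Theorem~\ref{thmequigenarith}, proved in Appendix~\ref{app:secequi} via Deligne's equidistribution theorem). Everything else is bookkeeping with the elementary bounds of Lemmas~\ref{lemboundfact} and \ref{lemquadchars} and with the classical orthonormality of Schur functions.
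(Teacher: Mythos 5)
Your proof is correct and follows the paper's strategy: discard the exceptional characters (even, non-primitive, $\chi^2=\chi_0$) using Lemmas~\ref{lemboundfact} and~\ref{lemquadchars}, rewrite the remaining correlation sum via Schur functions of $\Theta_\chi$ through Lemma~\ref{lemschur}, and conclude from Theorem~\ref{thmequigenarith} together with orthogonality of irreducible characters. The only differences are organisational: the paper reduces by bilinearity to the basis $\{\chi_\lambda\}\cup\{\indic_\nu\}$ while you expand $S(n,\alpha_q,\chi)\overline{S(n,\beta_q,\chi)}$ directly, and the paper works explicitly with $\mathrm{PU}(\ell+\deg M-1)$ because Theorem~\ref{thmequigenarith} controls non-trivial irreducibles of $\mathrm{PU}$, not $\mathrm{U}$ — your appeal to $\mathrm{U}(N)$ is harmless here since $s_{\lambda'}\overline{s_{\mu'}}$ is a $\mathrm{PU}$-class function (both factors are homogeneous of degree $n$), but the decomposition you interchange with the average really should be into $\mathrm{PU}$-irreducibles so that the theorem applies term by term.
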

\begin{proof}
As in the proof of Lemma~\ref{lemschur}, both $\sum_{\chi \in G(R_{\ell,M})\setminus \{ \chi_0\}} S(n,\alpha_q,\chi)\overline{S(n,\beta_q,\chi)}$ and $\sum_{\substack{\lambda \vdash n \\ \lambda_1 \le \ell +\deg M -1}} \hat{\alpha}_{\lambda} \overline{\hat{\beta}_{\lambda}}$ depend linearly on $\alpha|_{\mathrm{EFT}_n}$ and conjugate-linearly on $\beta|_{\mathrm{EFT}_n}$. Hence it suffices to prove \eqref{lemcovsum} for $\alpha,\beta$ in the set \eqref{basisfacts}.

First assume that $\alpha = \indic_{\nu}$ for some $\nu\in\mathrm{EFT}_n$ which corresponds to non-squarefree polynomials. Then $\hat{\alpha}_{\lambda}=0$ for all $\lambda \vdash n$. Thus we need to prove that
\begin{equation}\label{cov1ftb}
 \frac{\sum_{\chi \in G(R_{\ell,M})\setminus \{\chi_0\}}  q^{-n} S(n,\indic_{\nu},\chi) \overline{S(n,\beta,\chi)} }{|G(R_{\ell,M}|} = O_{\max(\beta_q;n),n,\ell,\deg M}\Big(\frac{1}{\sqrt{q}}\Big).
\end{equation}
By breaking the sum over $G(R_{\ell,M}) \setminus \{ \chi_0 \}$ in the left hand side of \eqref{cov1ftb} into two sums, one over $\chi$-s such that $\chi^2 \neq \chi_0$ and another over the rest of the $\chi$-s, and then applying Lemma~\ref{lemboundfact} and Lemma~\ref{lemquadchars}, we obtain the right hand side of \eqref{cov1ftb}. The argument also works if $\beta=\indic_{\nu}$. 

We now assume that  $\alpha=\chi_{\lambda^{(1)}}, \beta=\chi_{\lambda^{(2)}}$ for two partitions $\lambda^{(i)} \vdash n$. In this case we need to prove that
\begin{equation}\label{avchiind}
 \frac{\sum_{\chi \in G(R_{\ell,M})\setminus \{ \chi_0\}}   q^{-n} S(n,\chi_{\lambda^{(1)},q},\chi)\overline{S(n,\chi_{\lambda^{(2)},q},\chi)} }{|G(R_{\ell,M})|} =  \indic_{\substack{\lambda^{(1)} = \lambda^{(2)}\\ \lambda^{(1)}_1 \le \ell+\deg M - 1}} + O_{n,\ell+\deg M}\Big(\frac{1}{\sqrt{q}}\Big).
\end{equation}
We write
\begin{equation}\label{avchis1s2def}
\frac{\sum_{\chi \in G(R_{\ell,M}) \setminus \{\chi_0\}} q^{-n} S(n,\chi_{\lambda^{(1)},q},\chi)\overline{S(n,\chi_{\lambda^{(2)},q},\chi)}}{|G(R_{\ell,M})|} = S_1 + S_2,
\end{equation}
where
\begin{align}
S_1 &= \frac{\sum_{\chi \in G(R_{\ell,M}), \text{ primitive, odd and }\chi^2 \neq \chi_0} q^{-n} S(n,\chi_{\lambda^{(1)},q},\chi)\overline{S(n,\chi_{\lambda^{(2)},q},\chi)}}{|G(R_{\ell,M})|}, \nonumber \\
\label{sums2badchars} S_2 &=\frac{\sum_{\chi \in G(R_{\ell,M}) \setminus \{\chi_0\}, \text{ even or non-primitive or }\chi^2 = \chi_0} q^{-n} S(n,\chi_{\lambda^{(1)},q},\chi)\overline{S(n,\chi_{\lambda^{(2)},q},\chi)}}{|G(R_{\ell,M})|}.
\end{align}
From \S\ref{secchars}, Lemma~\ref{lemphi} and Lemma~\ref{lemquadchars}, we see that the number of characters appearing in the sum \eqref{sums2badchars} is $O_{\deg M}( \frac{|G(R_{\ell,M})|}{q})$. Thus, we may use the first part of Lemma~\ref{lemboundfact} to bound $S_2$ by
\begin{equation}\label{chiavs2bound}
S_2 = O_{n,\ell,\deg M}\Big(\frac{1}{q}\Big).
\end{equation}
We estimate $S_1$ as follows. From Lemma~\ref{lemschur}, we may rewrite $S_1$ as 
\begin{equation*}
S_1 = \frac{\sum_{\chi \in G(R_{\ell,M}), \text{ primitive, odd and }\chi^2 \neq \chi_0}  s_{\lambda^{(1)'}}(\Theta_{\chi})\overline{s_{\lambda^{(2)'}}(\Theta_{\chi})}}{|G(R_{\ell,M})|} + O_{n,\ell,\deg M}\Big(\frac{1}{\sqrt{q}}\Big).
\end{equation*}
Defining
\begin{equation*}
\begin{split}
S_3 &= \frac{\sum_{\chi \in G(R_{\ell,M}), \text{ primitive, odd}}  s_{\lambda^{(1)'}}(\Theta_{\chi})\overline{s_{\lambda^{(2)'}}(\Theta_{\chi})}}{|G(R_{\ell,M})|},\\
S_4 &= \frac{\sum_{\chi \in G(R_{\ell,M}), \text{ primitive, odd and }\chi^2 = \chi_0}  s_{\lambda^{(1)'}}(\Theta_{\chi})\overline{s_{\lambda^{(2)'}}(\Theta_{\chi})}}{|G(R_{\ell,M})|},
\end{split}
\end{equation*}
we have 
\begin{equation}\label{chiavs1s3s4}
S_1 = S_3 - S_4+ O_{n,\ell,\deg M}(\frac{1}{\sqrt{q}}).
\end{equation}
The sum $S_4$ is easily bounded by Lemma~\ref{lemphi} and Lemma~\ref{lemquadchars} as
\begin{equation}\label{chiavs4bound}
S_4 = O_{n,\ell,\deg M}(\frac{1}{q}).
\end{equation}
We now turn to $S_3$. The function $\Theta \mapsto s_{\lambda^{(1)'}}(\Theta)\overline{s_{\lambda^{(2)'}}(\Theta)}$ is a class function on $\mathrm{PU}(\ell+\deg M - 1)$, which is also a Laurent polynomial in the eigenvalues of $\Theta$, and so it can be written as a  linear combination of irreducible characters of $\mathrm{PU}(\ell+\deg M - 1)$:
\begin{equation}\label{chardecompss}
s_{\lambda^{(1)'}}(\Theta)\overline{s_{\lambda^{(2)'}}(\Theta)} = \sum_{\rho \text{ irred. rep. of }\mathrm{PU}(\ell+\deg M-1)} a_{\rho,\lambda^{(1)},\lambda^{(2)},\ell+\deg M -1} \cdot \Tr(\rho(\Theta)),
\end{equation}
where $a_{\rho,\lambda^{(1)},\lambda^{(2)},\ell+\deg M -1}= 0 $ for all but finitely many $\rho$-s. We denote the trivial representation of $\mathrm{PU}(\ell+\deg M - 1)$ by $\mathbf{1}$.
Since $\int_{\mathrm{PU}(\ell+\deg M-1)} \chi_{\rho}(\Theta) d\Theta = \indic_{\rho = \mathbf{1}}$, we can calculate the coefficient of $\mathbf{1}(\Theta)$ in the right hand side of \eqref{chardecompss} by integrating over $\mathrm{PU}(\ell+\deg M-1)$:
\begin{equation}\label{a1coef}
a_{\mathbf{1},\lambda^{(1)},\lambda^{(2)},\ell+\deg M -1} = \int_{\mathrm{PU}(\ell+\deg M-1)} 
s_{\lambda^{(1)'}}(\Theta)\overline{s_{\lambda^{(2)'}}(\Theta)} d\Theta = \indic_{\substack{\lambda^{(1)}=\lambda^{(2)} \\ \lambda_1^{(1)} \le \ell+\deg M - 1}},
\end{equation}
where the last passage is a special case of orthogonality of irreducible characters  \cite[Eq.~(3.8)]{gamburd2007}. Now, from \eqref{chardecompss}, \eqref{a1coef} and Theorem~\ref{thmequigenarith}, we obtain
\begin{equation}\label{chiavs3bound}
\begin{split}
\Big| S_3 - \indic_{\substack{\lambda^{(1)} = \lambda^{(2)}\\ \lambda^{(1)}_1 \le \ell+\deg M - 1}}  \Big| & \le \sum_{\mathbf{1} \neq \rho \text{ irred. rep. of }\mathrm{PU}(\ell+\deg M-1)} |a_{\rho,\lambda^{(1)},\lambda^{(2)},\ell+\deg M -1} |  \\
&\qquad \cdot \left| \frac{\sum_{\chi \in G(R_{\ell,M}) \text{ primitive and odd}} \Tr(\rho(\Theta_{\chi}))}{\# \{ \chi \in G(R_{\ell,M}) :  \chi \text{ primitive and odd }\}} \right| \cdot (1+O_{\ell,\deg M}(\frac{1}{q})) \\
& \le (1+O_{\ell,\deg M}(\frac{1}{q})) \sum_{\mathbf{1} \neq \rho \text{ irred. rep. of }\mathrm{PU}(\ell+\deg M-1)} |a_{\rho,\lambda^{(1)},\lambda^{(2)},\ell+\deg M -1} |  \frac{ C(\rho)}{\sqrt{q}} 
\end{split}
\end{equation}
for a constant $C(\rho)$ depending only on $\rho$, where in the first inequality we used \eqref{eq:number of chars}. Combining \eqref{avchis1s2def}, \eqref{chiavs2bound}, \eqref{chiavs1s3s4}, \eqref{chiavs4bound} and  \eqref{chiavs3bound}, we obtain \eqref{avchiind}, as needed.
\end{proof}
The following theorem is a generalization of \cite[Thm.~10.1]{rodgers2018}, which corresponds to the special case $\Delta=1$.
\begin{theorem}\label{thmcovgap}
Let $\alpha_q,\beta_q$ be factorization functions which come from $\alpha,\beta$. Let $n$ be a positive integer. Let $(\Delta,h) \in \mathcal{A}_q \times \mathbb{N}$ such that $\Delta$ is squarefree and either $n-4 \ge h \ge \deg \Delta$, or $\deg \Delta \ge 2$ and $h=n$. Then
\begin{equation}\label{covgen}
\mathrm{Cov}_{\mathcal{M}_q}(\alpha_q,\beta_q;n,\Delta,n-h) = q^{h-\deg \Delta} \sum_{\substack{\lambda \vdash n \\ \lambda_1 \le n-h+\deg \Delta -1}} \hat{\alpha}_{\lambda} \overline{\hat{\beta}_{\lambda}} + O_{\max(\alpha_q;n),\max(\beta_q;n),n}(q^{h-\deg\Delta - \frac{1}{2}}).
\end{equation}
\end{theorem}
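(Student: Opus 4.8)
The plan is to obtain Theorem~\ref{thmcovgap} by assembling three ingredients that are already in place: the character-sum identity \eqref{covchi} of Proposition~\ref{twoidengap}, the character-average evaluation \eqref{lemcovsum} of Proposition~\ref{avchiviaschur}, and the elementary estimate \eqref{weakphi} for $|\Delta|/\phi(\Delta)$. I would first invoke the first part of Proposition~\ref{twoidengap} with the Hayes modulus $R_{\ell,M}$ taken to be $R_{n-h,\Delta}$ --- this is allowed since the hypotheses force $n\ge h\ge\deg\Delta$ --- and rewrite \eqref{covchi}, using $|G(R_{n-h,\Delta})|=q^{n-h}\phi(\Delta)$, as
\begin{equation*}
\mathrm{Cov}_{\mathcal{M}_q}(\alpha_q,\beta_q;n,\Delta,n-h)=\frac{q^{n}}{|G(R_{n-h,\Delta})|}\cdot\frac{\sum_{\chi_0\neq\chi\in G(R_{n-h,\Delta})}q^{-n}S(n,\alpha_q,\chi)\overline{S(n,\beta_q,\chi)}}{|G(R_{n-h,\Delta})|}.
\end{equation*}

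The second factor is precisely the left-hand side of \eqref{lemcovsum} with $(\ell,M)=(n-h,\Delta)$, and the two cases in the hypothesis of Theorem~\ref{thmcovgap} are exactly those under which Proposition~\ref{avchiviaschur} applies: $\Delta$ is squarefree throughout, while $n-4\ge h$ forces $\ell=n-h\ge4$ and $h=n$ forces $\ell=0$ with $\deg M=\deg\Delta\ge2$. Hence this factor equals $\sum_{\lambda\vdash n,\ \lambda_1\le n-h+\deg\Delta-1}\hat\alpha_\lambda\overline{\hat\beta_\lambda}+O_{\max(\alpha_q;n),\max(\beta_q;n),n}(q^{-1/2})$, where the dependence of the implied constant on $\ell$ and $\deg M$ (both at most $n$) is absorbed into the dependence on $n$. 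For the prefactor, $q^{n}/|G(R_{n-h,\Delta})|=q^{h}/\phi(\Delta)=q^{h-\deg\Delta}\cdot|\Delta|/\phi(\Delta)=q^{h-\deg\Delta}(1+O_n(1/q))$ by \eqref{weakphi}. Multiplying the two factors, and using that $|\hat\alpha_\lambda|\le\max(\alpha_q;n)$ for $q$ large relative to $n$ together with the fact that there are only finitely many $\lambda\vdash n$, the main term $q^{h-\deg\Delta}\sum_\lambda\hat\alpha_\lambda\overline{\hat\beta_\lambda}$ separates out; the $O_n(1/q)$ correction to the prefactor then contributes $O_n(q^{h-\deg\Delta-1})$, and the prefactor times the $O(q^{-1/2})$ error contributes $O_{\max(\alpha_q;n),\max(\beta_q;n),n}(q^{h-\deg\Delta-1/2})$. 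Since $q^{h-\deg\Delta-1}\le q^{h-\deg\Delta-1/2}$, this yields \eqref{covgen}.

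I do not anticipate a real obstacle here: all the analytic difficulty, namely the $L$-function equidistribution input (Theorem~\ref{thmequigenarith}) packaged through Proposition~\ref{avchiviaschur}, has already been handled, and what remains is bookkeeping. The one point that demands care is matching the two permitted ranges of $h$ in Theorem~\ref{thmcovgap} with the two admissible cases $\ell\ge4$ and $(\ell=0,\ \deg M\ge2)$ of Proposition~\ref{avchiviaschur}; it is exactly this that forces the exclusion of $h\in\{n-3,n-2,n-1\}$, and of $h=n$ when $\deg\Delta\le1$. A minor technicality is that the bound $|\hat\alpha_\lambda|\le\max(\alpha_q;n)$ is fully transparent only once $q$ is large compared to $n$, so that every cycle type of $S_n$ is realised by a squarefree polynomial of degree $n$ over $\FF_q$; this is harmless in the large-$q$ regime in which the asymptotic \eqref{covgen} is stated.
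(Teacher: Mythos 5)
Your proof is correct and is essentially identical to the paper's own argument: both combine \eqref{covchi} of Proposition~\ref{twoidengap} with Proposition~\ref{avchiviaschur} for $(\ell,M)=(n-h,\Delta)$, rewrite the prefactor as $q^h/\phi(\Delta)$, and finish with \eqref{weakphi}. The observation about matching the two hypothesis ranges to the two admissible cases of Proposition~\ref{avchiviaschur} is exactly the point the theorem's hypotheses are designed to encode, and your handling of the lower-order correction to the prefactor is the same bookkeeping the paper carries out.
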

\begin{proof}
From Proposition~\ref{avchiviaschur} with $\ell=n-h, \, M=\Delta$ and \eqref{covchi}, we have
\begin{equation*}
\begin{split}
\mathrm{Cov}_{\mathcal{M}_q}(\alpha_q,\beta_q;n,\Delta,n-h) &= \left(\frac{1}{q^{n-h} \phi(\Delta)} \right)^2 \sum_{\chi_0 \neq \chi \bmod R_{n-h,\Delta}} S(n,\alpha_q,\chi) \overline{S(n,\beta_q, \chi)}\\
&=\frac{q^h}{\phi(\Delta)} \frac{\sum_{\chi \in G(R_{n-h,\Delta})\setminus \{ \chi_0\}} q^{-n} S(n,\alpha_q,\chi)\overline{S(n,\beta_q,\chi)} }{|G(R_{n-h,\Delta})|}\\
&= \frac{q^h}{\phi(\Delta)} \Big(\sum_{\substack{\lambda \vdash n \\ \lambda_1 \le n-h+\deg \Delta -1}} \hat{\alpha}_{\lambda} \overline{\hat{\beta}_{\lambda}} + O_{\max(\alpha_q;n),\max(\beta_q;n),n,n-h,\deg \Delta}(\frac{1}{\sqrt{q}})\Big),
\end{split}
\end{equation*}
from which \eqref{covgen}  follows by invoking \eqref{weakphi}.
\end{proof}
Although we do not use it, the next proposition shows that in the case $h=\deg \Delta$, the error term in Theorem~\ref{thmcovgap} can be improved, and the proof is elementary.
\begin{proposition}\label{propcovgap}
Let $\alpha_q,\beta_q$ be factorization functions which come from $\alpha,\beta$. Let $n$ be a positive integer and let $\Delta$ be a polynomial in $\mathcal{A}_q$ of degree $\le n$. Then
\begin{equation*}
\mathrm{Cov}_{\mathcal{M}_q}(\alpha_q,\beta_q;n,\Delta,n-\deg \Delta) =  \sum_{\substack{\lambda \vdash n \\ \lambda_1 \le n-1}} \hat{\alpha}_{\lambda} \overline{\hat{\beta}_{\lambda}} + O_{\max(\alpha_q;n),\max(\beta_q;n),n}(\frac{1}{q}).
\end{equation*}
\end{proposition}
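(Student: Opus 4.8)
The plan is to exploit the fact that the gap parameter here, $\ell = n-\deg\Delta$, is as large as it can meaningfully be: since $\ell+\deg\Delta = n$, the set $\mathcal{M}_{n,q,\Delta} = \{f\in\Mnq:\gcd(f,\Delta)=1\}$ is a representative set modulo $R_{n-\deg\Delta,\Delta}$, so the map sending $A\in(\mathcal{M}_q/R_{n-\deg\Delta,\Delta})^{\times}$ to its unique degree-$n$ representative is a bijection onto $\mathcal{M}_{n,q,\Delta}$ under which every inner sum $\sum_{f\equiv A\bmod R_{n-\deg\Delta,\Delta},\, f\in\Mnq}\alpha_q(f)$ in the definition of $\mathrm{Cov}_{\mathcal{M}_q}(\alpha_q,\beta_q;n,\Delta,n-\deg\Delta)$ collapses to a single term. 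Hence $\mathrm{Cov}_{\mathcal{M}_q}(\alpha_q,\beta_q;n,\Delta,n-\deg\Delta) = \mathrm{Cov}_{f\in\mathcal{M}_{n,q,\Delta}}(\alpha_q(f),\beta_q(f))$, and the proposition reduces to evaluating the ordinary covariance of two factorization functions over degree-$n$ monics, with no $L$-functions needed.

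The first move is to strip away the coprimality and squarefreeness constraints. Since $|\Mnq\setminus\mathcal{M}_{n,q,\Delta}| = O_n(q^{n-1})$, with $q^n/|\mathcal{M}_{n,q,\Delta}| = |\Delta|/\phi(\Delta) = 1+O_n(1/q)$ by \eqref{weakphi}, and since $|\alpha_q|,|\beta_q|$ are bounded on $\Mnq$ by $\mathrm{max}(\alpha_q;n),\mathrm{max}(\beta_q;n)$, passing from $\mathcal{M}_{n,q,\Delta}$ to $\Mnq$ perturbs the three averages $\EE(\alpha_q\overline{\beta_q})$, $\EE(\alpha_q)$, $\EE(\overline{\beta_q})$ — and hence the covariance — by $O(1/q)$; the $O_n(q^{n-1})$ non-squarefree polynomials are discarded the same way. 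On a squarefree $f\in\Mnq$ of factorization type $\lambda\vdash n$ one has $\alpha_q(f) = \alpha|_{S_n^{\#}}(\lambda)$ and $\beta_q(f) = \beta|_{S_n^{\#}}(\lambda)$ by the definition of a factorization function (the factorization type $\lambda_f$ equals the image of $\lambda$ under the embedding $S_n^{\#}\hookrightarrow\mathrm{EFT}$). Thus the covariance reduces, up to $O(1/q)$, to an expression in the factorization-type statistics $\PP_{\Mnq}(\lambda)$ of random degree-$n$ monics, namely $\sum_{\lambda\vdash n}\PP_{\Mnq}(\lambda)\,\alpha|_{S_n^{\#}}(\lambda)\overline{\beta|_{S_n^{\#}}(\lambda)}$ minus the product of the two corresponding first moments.

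Now I would invoke the equidistribution input: by the degree-$n$ analogue of \eqref{equilamb} (i.e.\ \cite[Lem.~2.1]{andrade2015}), $\PP_{\Mnq}(\lambda) = \PP_{S_n}(\lambda)+O_n(1/q)$ for each $\lambda\vdash n$, and since $\alpha|_{S_n^{\#}},\beta|_{S_n^{\#}}$ take boundedly many values, replacing $\PP_{\Mnq}$ by the uniform measure on $S_n^{\#}$ costs a further $O(1/q)$. The main term then equals $\mathrm{Cov}_{\pi\in S_n}(\alpha|_{S_n^{\#}}(\pi),\beta|_{S_n^{\#}}(\pi))$. Expanding $\alpha|_{S_n^{\#}}$ and $\beta|_{S_n^{\#}}$ as in \eqref{deffourier} and using orthonormality of the irreducible characters of $S_n$, together with $\chi_{(n)}\equiv 1$ (so $\hat{\alpha}_{(n)} = \EE_{\pi\in S_n}\alpha|_{S_n^{\#}}(\pi)$ and likewise for $\beta$), this covariance equals $\sum_{\lambda\vdash n}\hat{\alpha}_{\lambda}\overline{\hat{\beta}_{\lambda}} - \hat{\alpha}_{(n)}\overline{\hat{\beta}_{(n)}} = \sum_{\lambda\vdash n,\ \lambda_1\le n-1}\hat{\alpha}_{\lambda}\overline{\hat{\beta}_{\lambda}}$, which is the claimed main term.

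I do not expect a genuine obstacle: the whole point is the ``collapse'' observation of the first paragraph, which is why this estimate is elementary and sharper than the $h=\deg\Delta$ specialization of Theorem~\ref{thmcovgap} (no Deligne equidistribution, no restriction on $\deg\Delta$ or $n$, and error $O(1/q)$ rather than $O(1/\sqrt{q})$). The only care needed is in the bookkeeping of the successive $O(1/q)$ truncations — legitimate because every quantity involved is bounded in terms of $n$, $\mathrm{max}(\alpha_q;n)$, $\mathrm{max}(\beta_q;n)$ — and in the routine remark that for $q$ below a threshold depending on $n$ the estimate is vacuous, so one may assume $q$ large enough that every $\lambda\vdash n$ is realised by a squarefree monic of degree $n$.
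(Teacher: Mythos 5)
Your proof is correct and follows essentially the same route as the paper's: the collapse of $\mathrm{Cov}_{\mathcal{M}_q}(\alpha_q,\beta_q;n,\Delta,n-\deg\Delta)$ to the ordinary covariance over $\mathcal{M}_{n,q,\Delta}$, then over $\Mnq$ at a cost of $O(1/q)$ via \eqref{weakphi}, then equidistribution of factorization types via \eqref{equilamb}, then Fourier expansion over $S_n$ and Plancherel. The paper states the collapse more tersely, jumping directly to the display equivalent to your second paragraph, but the underlying mechanism (that $\mathcal{M}_{n,q,\Delta}$ is a representative set modulo $R_{n-\deg\Delta,\Delta}$) is exactly what you spell out.
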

\begin{proof}
From \eqref{weakphi} we have 
\begin{equation}\label{condelfull2}
\begin{split}
\mathrm{Cov}_{\mathcal{M}_q}(\alpha_q,\beta_q;n,\Delta,n-\deg \Delta) &=\frac{\sum_{f \in \Mnq} \alpha_q(f) \overline{\beta_q(f)}}{q^{n}} - \frac{\sum_{f \in \Mnq} \alpha_q(f)}{q^{n}} \cdot \frac{\sum_{f \in \Mnq} \overline{\beta_q(f)}}{q^{n}}\\
&\qquad + O_{\max(\alpha_q;n),\max(\beta_q;n),n}\Big( \frac{1}{q}\Big).
\end{split}
\end{equation}
We use the notation of the proof of Lemma~\ref{lemcorbiggcd}, with $n$ in place of $n-1$. In particular, for a partition $\lambda$ of $n$, let $p_{\lambda}$ the probability that a uniformly chosen element in $S_{n}$ has cycle structure given by $\lambda$. Let $\gamma_q$ be a factorization function which comes from $\gamma$. By \eqref{equilamb} we have
\begin{equation}\label{sumviaplam}
\frac{\sum_{f \in \Mnq} \gamma_q(f)}{q^{n}} = \sum_{\lambda \vdash n} p_{\lambda} \cdot \gamma_q(\lambda) + O_{n,\max(\gamma_q;n)}(\frac{1}{q})  = \frac{1}{n!}\sum_{\pi \in S_n} \gamma(\pi) + O_{n,\max(\gamma_q;n)}(\frac{1}{q}).
\end{equation}
If $\gamma |_{S_n^{\#}} (\pi) = \sum_{\lambda \vdash n} \hat{\gamma}_{\lambda} \chi_{\lambda}(\pi)$ is the Fourier expansion of $\gamma$, then \eqref{sumviaplam} may be expressed as
\begin{equation}\label{sumviaharmonics}
\frac{\sum_{f \in \Mnq} \gamma_q(f)}{q^{n}} = \hat{\gamma}_{(n)}+ O_{n,\max(\gamma_q;n)}(\frac{1}{q}).
\end{equation}
From \eqref{condelfull2} and \eqref{sumviaharmonics} with $\gamma=\alpha,\overline{\beta}$, we have
\begin{equation*}
\mathrm{Cov}_{\mathcal{M}_q}(\alpha_q,\beta_q;n,\Delta,n-\deg \Delta) = \EE_{S_n} \alpha \overline{\beta}  - \hat{\alpha }_{(n)} \cdot \overline{\hat{\beta}_{(n)}} +O_{\max(\alpha_q;n),\max(\beta_q;n),n}\Big( \frac{1}{q}\Big).
\end{equation*}
The proof is concluded by noting that
\begin{equation*}
\EE_{S_n} \alpha \overline{\beta}  = \sum_{\lambda \vdash n} \hat{\alpha}_{\lambda} \overline{\hat{\beta}_{\lambda}},
\end{equation*}
a direct consequence of Plancherel theorem for the group $S_n$.
\end{proof}
We also need the following identity.
\begin{lemma}\label{lem:CoeffInterpret}
Let $\alpha_q$ be a factorization function which comes from $\alpha$. Let $n$ be a positive integer. Then
\begin{equation*}
 \hat{\alpha}_{(n-1,1)} = 
 \EE_{f \in \mathcal{A}_{n-1,q}} \alpha_q(f \cdot T) - \EE_{\Anq} \alpha_q+O_{n,\max(\alpha_q;n)}(\frac{1}{q}).
\end{equation*}
\end{lemma}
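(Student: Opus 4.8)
The plan is to evaluate $\hat{\alpha}_{(n-1,1)}$ exactly as an average over $S_n$ using orthogonality of characters, and then to pass from averages over $S_n$ to averages over polynomials via the equidistribution estimate \eqref{equilamb} (together with \eqref{sumviaharmonics}). From \eqref{deffourier} and orthonormality of the irreducible characters of $S_n$ (which are real-valued) one has $\hat{\alpha}_{(n-1,1)} = \frac{1}{n!}\sum_{\pi\in S_n}\alpha|_{S_n^{\#}}(\pi)\,\chi_{(n-1,1)}(\pi)$. Using the decomposition of the natural permutation character of $S_n$ as $\chi_{(n)}+\chi_{(n-1,1)}$, i.e.\ $\chi_{(n-1,1)}(\pi)=\mathrm{fix}(\pi)-1$ where $\mathrm{fix}(\pi)=\#\{i:\pi(i)=i\}$, this becomes
\begin{equation*}
\hat{\alpha}_{(n-1,1)} = \frac{1}{n!}\sum_{\pi\in S_n}\alpha|_{S_n^{\#}}(\pi)\,\mathrm{fix}(\pi)\;-\;\hat{\alpha}_{(n)}.
\end{equation*}
Expanding $\mathrm{fix}(\pi)$ and exchanging the order of summation, the stabilizer of a point in $S_n$ is a copy of $S_{n-1}$, and an element of $S_{n-1}$ embedded this way has cycle type equal to that of the element with one extra part $1$ adjoined; hence, in the notation of the proof of Lemma~\ref{lemcorbiggcd}, the first term is $\sum_{\lambda\vdash n-1}\PP_{S_{n-1}}(\lambda)\,\alpha((\lambda,1))$. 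This identity for $\hat{\alpha}_{(n-1,1)}$ is exact.

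It remains to match both terms with polynomial averages up to an error $O_{n,\max(\alpha_q;n)}(1/q)$. Since factorization functions are even, $\EE_{\Anq}\alpha_q=\EE_{\Mnq}\alpha_q$ and $\EE_{\mathcal{A}_{n-1,q}}\alpha_q(f\cdot T)=\EE_{\mathcal{M}_{n-1,q}}\alpha_q(f\cdot T)$. By \eqref{sumviaharmonics} applied to $\gamma=\alpha$, $\EE_{\Mnq}\alpha_q=\hat{\alpha}_{(n)}+O_{n,\max(\alpha_q;n)}(1/q)$. For the remaining term I would split $\sum_{f\in\mathcal{M}_{n-1,q}}\alpha_q(f\cdot T)$ according to whether $f$ is squarefree and coprime to $T$; the complementary set has $O(q^{n-2})$ elements and so contributes an acceptable error. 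For $f$ squarefree of type $\lambda\vdash n-1$ with $T\nmid f$, the product $f\cdot T$ is squarefree of degree $n$ of type $(\lambda,1)$, so $\alpha_q(f\cdot T)=\alpha((\lambda,1))$; applying \eqref{equilamb} (with $S_{n-1}$) then gives $\EE_{\mathcal{M}_{n-1,q}}\alpha_q(f\cdot T)=\sum_{\lambda\vdash n-1}\PP_{S_{n-1}}(\lambda)\,\alpha((\lambda,1))+O_{n,\max(\alpha_q;n)}(1/q)$. Substituting these two evaluations into the exact identity above yields the lemma.

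The argument is elementary and I do not expect a genuine obstacle. The points needing some care are the character identity $\chi_{(n-1,1)}=\mathrm{fix}-1$ together with the ``adjoin a fixed point'' bookkeeping in the first step, and the verification that non-squarefree polynomials (and, for $f\cdot T$, those divisible by $T$) are negligible in the second step. The case $n=1$ is trivial once $\hat{\alpha}_{(0,1)}$ is read as $0$, so one may assume $n\ge2$.
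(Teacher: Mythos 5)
Your proof is correct, but it reverses the direction of the paper's argument. The paper starts from the polynomial side: it replaces $\EE_{f \in \mathcal{A}_{n-1,q}}\alpha_q(f \cdot T)$ by $\EE_{f,x}\alpha_q(f\cdot(T-x))$ (exploiting that a factorization function cannot tell $T$ from $T-x$), re-indexes so that the difference $\EE_{f \in \mathcal{A}_{n-1,q}} \alpha_q(f \cdot T) - \EE_{\Anq} \alpha_q$ becomes a single average of $\alpha_q(g)\cdot(\#\{x : g(x)=0\}-1)$ over $g\in\Anq$, recognizes $\#\{x : g(x)=0\}-1$ as the trace of the standard $(n-1)$-dimensional representation, and then invokes \cite[Lemma~2.1]{andrade2015} once to compare this polynomial average with the corresponding $S_n$-average, which equals $\hat{\alpha}_{(n-1,1)}$ by character theory. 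You instead start from the group side: you compute $\hat{\alpha}_{(n-1,1)}$ exactly by orthogonality, use the identity $\chi_{(n-1,1)}=\mathrm{fix}-1$ to split it into $\sum_{\lambda\vdash n-1}\PP_{S_{n-1}}(\lambda)\alpha((\lambda,1))-\hat{\alpha}_{(n)}$, and then match each of the two terms to a polynomial average via two separate applications of equidistribution, namely \eqref{sumviaharmonics} and an \eqref{equilamb}-type estimate (which is in fact \eqref{ave1n1} of the paper with $a=0$). The two proofs use the same essential inputs --- the character identity $\chi_{(n-1,1)}=\mathrm{fix}-1$ (dressed differently in each argument) and the Andrade--Bary-Soroker--Rudnick equidistribution result --- but yours is more modular, doing two simple comparisons rather than the paper's single clever re-indexing; the paper's is marginally slicker because it avoids the intermediate $S_{n-1}$-average. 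Your caveat about $n=1$ (where the partition $(n-1,1)$ does not exist) is a fair point that the paper glosses over, but it is not a gap in your argument.
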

\begin{proof}
Let $x \in \FF_q$. Observe that, whenever $ f\in \mathcal{A}_{n-1,q}$ is squarefree and coprime to $T$ and $T-x$, $\alpha_q(f \cdot T)= \alpha_q (f \cdot (T-x))$. Hence
\[  \EE_{f \in \mathcal{A}_{n-1,q}}\alpha_q(f \cdot T) =  \EE_{f \in \mathcal{A}_{n-1,q},\, x \in \FF_q}\alpha_q(f \cdot (T-x))  + O_{n,\max(\alpha_q;n)}(\frac{1}{q} ). \] 
Now
\begin{equation*}
\begin{split}
\EE_{f \in \mathcal{A}_{n-1,q},\, x \in \FF_q}\alpha_q(f \cdot (T-x))  &= \frac{1}{q^n(q-1)} \sum_{ f \in \mathcal A_{n-1,q}, x\in \mathbb F_q} \alpha_q( f \cdot (T-x) ) \\
&=\frac{1}{q^n(q-1)} \sum_{g \in \mathcal A_{n, q} } \sum_{ \substack{ x\in \mathbb F_q:\\ T-x \mid g}} \alpha_q(g)  =\frac{1}{q^n(q-1)}  \sum_{ g \in \mathcal A_{n,q}} \alpha_q(g) \# \{ x \in \mathbb F_q  : g(x)=0 \} .
\end{split}
\end{equation*}
Hence
\[ \EE_{f \in \mathcal{A}_{n-1,q}} \alpha_q(f \cdot T) - \EE_{\Anq} \alpha_q = \frac{1}{q^n(q-1)} \sum_{ g \in \mathcal A_{n,q}} \alpha_q(g) \left(  \# \{ x \in \mathbb F_q  : g(x)=0 \}  - 1 \right)  + O_{n,\max(\alpha_q;n)}(\frac{1}{q} ). \] 

Now for $g$ a squarefree polynomial, the function $\left( \# \{ x \in \mathbb F_q  : g(x)=0  \} - 1\right)$ simply counts the fixed points of the corresponding permutation and subtracts one. This is also the trace of the standard $(n-1)$-dimensional representation of $S_n$. By \cite[Lemma~2.1]{andrade2015}, the average value of $\alpha_q$ times the trace of the standard representation over squarefree polynomials is equal, to within $O_{n,\max(\alpha_q;n)}(1/q)$, of its average over permutations, which is equal, by character theory, to the multiplicity $\hat{\alpha}_{(n-1,1)}$ of the standard representation within $\alpha$, as needed.

\end{proof}

\subsection{Conclusion of the proof of Theorem~\ref{maintermcovthm}}
Recall that $n\ge 5$ is an integer and that $\Delta$ is a squarefree polynomial in $\mathcal{A}_q$ which is either of degree~$\le n-5$ or of degree $n-1$. From the second part of Proposition~\ref{twoidengap} with $h=\deg \Delta+1$, we obtain
\begin{equation}\label{propwithhchosen}
\begin{split}
&\frac{\mathrm{Cov}_{\mathcal{M}_q}(\alpha,\beta;n,\Delta,n-\deg \Delta - 1)}{q} - \mathrm{Cov}_{\mathcal{M}_q}(\alpha,\beta;n,\Delta,n-\deg \Delta)\\ 
& \quad =\sum_{c \in \FF_q^{\times}} \mathrm{Cov}_{f \in \mathcal{M}_{n,q,\Delta}} (\alpha(f), \beta(f+c \Delta)).
\end{split}
\end{equation}
From \eqref{propwithhchosen} and Proposition~\ref{propcordel}, we obtain
\begin{equation}\label{twocovsvscov}
\begin{split}
&\frac{\mathrm{Cov}_{\mathcal{M}_q}(\alpha,\beta;n,\Delta,n-\deg \Delta - 1)}{q} - \mathrm{Cov}_{\mathcal{M}_q}(\alpha,\beta;n,\Delta,n-\deg \Delta)\\
&= q \cdot \mathrm{Cov}_{\mathcal{A}_q}(\alpha,\beta;n,\Delta)  -a_{\Delta,q} \Big( \EE_{\Anq} \alpha - \EE_{f\in \mathcal{A}_{n-1,q}} \alpha(f \cdot T) \Big) \cdot \Big( \EE_{\Anq} \overline{\beta} - \EE_{f\in \mathcal{A}_{n-1,q}} \overline{\beta}(f \cdot T) \Big)\\
&\qquad +  O_{\mathrm{max} (\alpha;n),\mathrm{max} (\beta;n),n}\Big( \frac{1}{\sqrt{q}} \Big).
 \end{split} 
\end{equation}
From \eqref{twocovsvscov}, Lemma~\ref{lem:CoeffInterpret} and Theorem~\ref{thmcovgap} with $h=1+\deg \Delta$ and $h=\deg \Delta$, we obtain 
 \begin{equation}\label{covnonisolated2}
 \begin{split}
 & -\hat{\alpha}_{(n-1,1)} \overline{\hat{\beta}_{(n-1,1)}} + O_{\max(\alpha_q;n),\max(\beta_q;n),n}\Big(\frac{1}{\sqrt{q}}\Big)\\
 &\quad = q \cdot \mathrm{Cov}_{\mathcal{A}_q}(\alpha,\beta;n,\Delta)  -a_{\Delta,q} \hat{\alpha}_{(n-1,1)} \overline{\hat{\beta}_{(n-1,1)}} +  O_{\mathrm{max} (\alpha_q;n),\mathrm{max} (\beta_q;n),n}\Big( \frac{1}{\sqrt{q}} \Big).
  \end{split}.
 \end{equation}
By isolating the term $\mathrm{Cov}_{\mathcal{A}_q}(\alpha,\beta;n,\Delta)$ in \eqref{covnonisolated2}, we conclude the proof of the theorem. \qed
\section{Proof of Theorem~\ref{thmmn}}
\subsection{Fundamental identity}
The following proposition uses the definitions of $S(n,\alpha,\chi)$ and $\mathcal{M}_{n,q,\Delta}$, recall \eqref{defsn} and \eqref{defmnd}.
\begin{proposition}\label{lemcorexp2}
Let $\alpha, \beta \colon \mathcal{M}_q \to \mathbb{C}$ be two arithmetic functions. Let $n$ be a positive integer and let $\Delta$ be a polynomial in $\mathcal{A}_q$ of degree $<n$. Let $c \in \FF_q^{\times}$ be the leading coefficient of $\Delta$, and $g$ be the unique element of $(\mathcal{M}_q/R_{n-\deg \Delta,\Delta})^{\times}$ such that
\begin{equation*}
g \equiv 1 \bmod \Delta, \quad g \equiv T^{n-\deg \Delta}+c \bmod R_{n-\deg \Delta, 1}.
\end{equation*}
We have
\begin{equation*}
\EE_{f \in \mathcal{M}_{n,q,\Delta}} \alpha(f) \overline{\beta(f+\Delta)} = \frac{1}{q^{2(n-\deg \Delta)} \phi^2(\Delta)} \sum_{\chi \in G(R_{n-\deg \Delta,\Delta}) } \chi(g) S(n,\alpha,\chi) \overline{S(n,\beta,\chi)}.
\end{equation*}
\end{proposition}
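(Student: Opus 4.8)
The plan is to detect the additive relation $f_2 = f_1 + \Delta$ by orthogonality of Hayes characters. Write $\ell = n - \deg \Delta$, so the modulus in the statement is $R_{\ell,\Delta}$. Two preliminary remarks. First, since $\ell + \deg\Delta = n$, the set $\{f \in \Mnq : \gcd(f,\Delta)=1\} = \mathcal{M}_{n,q,\Delta}$ is a representative set modulo $R_{\ell,\Delta}$; in particular $|\mathcal{M}_{n,q,\Delta}| = q^{\ell}\phi(\Delta)$. Second, $\chi$ vanishes off the polynomials coprime to $\Delta$, so $S(n,\alpha,\chi) = \sum_{f \in \mathcal{M}_{n,q,\Delta}}\alpha(f)\chi(f)$, and similarly for $\beta$. (Also $g$ is well defined: by the structure \eqref{structunit} of $\left(\mathcal{M}_q / R_{\ell,\Delta}\right)^{\times}$ as a direct sum of the short-interval part and $\left(\FF_q[T]/\Delta\right)^{\times}$, the two displayed congruences specify the two coordinates of $g$.)

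First I would expand the right-hand side. Inserting the definition of $S$, interchanging the order of summation, and using $\chi(g)\chi(f_1) = \chi(gf_1)$, I obtain
\[
\sum_{\chi \in G(R_{\ell,\Delta})} \chi(g)\,S(n,\alpha,\chi)\,\overline{S(n,\beta,\chi)} = \sum_{f_1,f_2 \in \mathcal{M}_{n,q,\Delta}} \alpha(f_1)\overline{\beta(f_2)} \sum_{\chi \in G(R_{\ell,\Delta})} \chi(gf_1)\overline{\chi(f_2)} .
\]
By the orthogonality relation \eqref{ortho2}, applied with a monic representative of the unit class $gf_1$ and with $f_2$, the inner sum equals $q^{\ell}\phi(\Delta)$ if $gf_1 \equiv f_2 \bmod R_{\ell,\Delta}$ and $0$ otherwise.

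The crux is the claim that, for $f_1, f_2 \in \mathcal{M}_{n,q,\Delta}$, one has $gf_1 \equiv f_2 \bmod R_{\ell,\Delta}$ if and only if $f_2 = f_1 + \Delta$. Since $\deg\Delta < n$, the polynomial $f_1 + \Delta$ is monic of degree $n$ and $\gcd(f_1+\Delta,\Delta) = \gcd(f_1,\Delta) = 1$, so it lies in $\mathcal{M}_{n,q,\Delta}$; as the latter is a representative set modulo $R_{\ell,\Delta}$, it is enough to verify $gf_1 \equiv f_1 + \Delta \bmod R_{\ell,\Delta}$, i.e. the two defining conditions of the relation. Modulo $\Delta$: since $g \equiv 1 \bmod \Delta$, both $gf_1$ and $f_1+\Delta$ are $\equiv f_1$. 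For the next-to-leading coefficients I would use the reversal $h \mapsto h^{*}(T) = T^{\deg h} h(1/T)$, which is multiplicative and whose coefficients of $T, T^{2}, \dots, T^{\ell}$ record the first $\ell$ next-to-leading coefficients of $h$. Choosing a monic representative of $g$, the condition $g \equiv T^{\ell} + c \bmod R_{\ell,1}$ reads $g^{*} \equiv 1 + cT^{\ell} \bmod T^{\ell+1}$, so
\[
(gf_1)^{*} \equiv (1 + cT^{\ell}) f_1^{*} \equiv f_1^{*} + cT^{\ell} \pmod{T^{\ell+1}},
\]
using that $f_1$ is monic, hence $f_1^{*} \equiv 1 \bmod T$. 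On the other hand $(f_1+\Delta)^{*} = f_1^{*}(T) + T^{n}\Delta(1/T)$, and since $c$ is the leading coefficient of $\Delta$ and $\deg\Delta = n - \ell$ we get $T^{n}\Delta(1/T) \equiv cT^{\ell} \bmod T^{\ell+1}$. Therefore $(gf_1)^{*} \equiv (f_1+\Delta)^{*} \bmod T^{\ell+1}$, which is exactly the agreement of the first $\ell$ next-to-leading coefficients. This proves the claim.

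Granting the claim, for each $f_1 \in \mathcal{M}_{n,q,\Delta}$ there is a single surviving term, with $f_2 = f_1 + \Delta$, so the right-hand side of the proposition equals
\[
\frac{q^{\ell}\phi(\Delta)}{q^{2\ell}\phi^{2}(\Delta)} \sum_{f_1 \in \mathcal{M}_{n,q,\Delta}} \alpha(f_1)\overline{\beta(f_1+\Delta)} = \frac{1}{q^{\ell}\phi(\Delta)} \sum_{f_1 \in \mathcal{M}_{n,q,\Delta}} \alpha(f_1)\overline{\beta(f_1+\Delta)},
\]
which is $\EE_{f \in \mathcal{M}_{n,q,\Delta}}\alpha(f)\overline{\beta(f+\Delta)}$ by $|\mathcal{M}_{n,q,\Delta}| = q^{\ell}\phi(\Delta)$. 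The only genuinely delicate step is the reversal computation in the key claim; everything else is orthogonality plus the bookkeeping identification of $\mathcal{M}_{n,q,\Delta}$ as a representative set.
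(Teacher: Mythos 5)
Your proof is correct and follows essentially the same approach as the paper: both expand via orthogonality of Hayes characters and reduce to the single key identity $g f \equiv f + \Delta \bmod R_{n-\deg\Delta,\Delta}$ for $f \in \mathcal{M}_{n,q,\Delta}$, the only difference being that you expand the character-sum side while the paper expands $\alpha(f)\overline{\beta(f+\Delta)}$ directly. Your reversal-map computation verifying the short-interval congruence is a nice explicit check of a step the paper dispatches with ``by the definition of $g$.''
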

\begin{proof}
The orthogonality relation \eqref{ortho2} implies that for any $f \in \mathcal{M}_{n,q,\Delta}$ we have 
\begin{equation}\label{findhay}
\alpha(f) = \frac{1}{q^{n-\deg \Delta}  \phi(\Delta)} \sum_{\substack{ g_1 \in \mathcal{M}_{n,q,\Delta}  \\ \chi \in G(R_{n-\deg \Delta, \Delta})}} \alpha(g_1) \chi(g_1) \overline{\chi(f)},
\end{equation}
and similarly we have
\begin{equation}\label{fcindhay}
\beta(f+\Delta) = \frac{1}{q^{n- \deg \Delta} \phi(\Delta)} \sum_{\substack{ g_2 \in \mathcal{M}_{n,q,\Delta} \\ \chi \in G(R_{n-\deg \Delta})}} \beta(g_2) \chi(g_2) \overline{\chi(f+\Delta )}.
\end{equation}
From \eqref{findhay} and \eqref{fcindhay} we obtain
\begin{equation}\label{simplicor1hay}
\begin{split}
& \EE_{f \in \mathcal{M}_{n,q,\Delta}} \alpha(f) \overline{\beta(f+\Delta)} \\
&\qquad = \frac{1}{q^{3(n-\deg \Delta)} \phi^3(\Delta)} \sum_{\substack{g_1,g_2 \in \mathcal{M}_{n,q,\Delta}  \\ \chi_1, \chi_2 \in G(R_{n-\deg \Delta, \Delta})}} \Big( \alpha(g_1) \chi_1(g_1) \overline{\beta(g_2)\chi_2(g_2)} \sum_{ f \in \mathcal{M}_{n,q,\Delta}} \overline{\chi_1(f)}\chi_2(f+\Delta) \Big).
\end{split}
\end{equation}
By the definition of $g$, we have 
\begin{equation}\label{fdelgrel}
f+\Delta \equiv g \cdot f \bmod R_{n-\deg \Delta,\Delta}.
\end{equation}
for all $f\in \mathcal{M}_{n,q,\Delta}$. For any pair of characters $\chi_1,\chi_2 \in G(R_{n-\deg \Delta,\Delta})$ we have, by \eqref{fdelgrel} and the orthogonality relation \eqref{ortho1} with $F=\mathcal{M}_{n,q,\Delta}$,
\begin{equation}\label{corchihay}
\begin{split}
\sum_{f \in \mathcal{M}_{n,q,\Delta}} \overline{\chi_1(f)}\chi_2(f+\Delta) &= \chi_2(g) \sum_{f \in \mathcal{M}_{n,q,\Delta}} \overline{\chi_1(f)}\chi_2(f) \\
&= \chi_2(g) \cdot q^{n-\deg \Delta}  \phi(\Delta) \cdot \indic_{\chi_1 = \chi_2}.
\end{split}
\end{equation}
Plugging \eqref{corchihay} in \eqref{simplicor1hay}, we conclude the proof.
\end{proof}
\subsection{Estimates}
\begin{lemma}\label{lemmntomnq}
Let $\alpha \colon \mathcal{M}_q \to \mathbb{C}$. Let $n$ be a positive integer and let $\Delta$ be a polynomial in $\mathcal{A}_q$ of degree $<n$. We have
\begin{equation*}
\EE_{\mathcal{M}_{n,q,\Delta}} \alpha =
\EE_{\Mnq} \alpha  + O_{n,\max(\alpha;n)}\Big(\frac{1}{q}\Big).
\end{equation*}
\end{lemma}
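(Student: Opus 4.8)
The plan is an elementary counting argument: all but an $O_n(q^{n-1})$ fraction of the polynomials in $\Mnq$ are coprime to $\Delta$, so replacing the average over $\mathcal{M}_{n,q,\Delta}$ by the average over $\Mnq$ changes it by only $O(1/q)$. I emphasize that here $\alpha$ is an arbitrary function on $\mathcal{M}_q$, so no factorization structure is used and the argument is entirely self-contained; it is a simplified cousin of the counting in Lemma~\ref{lemphi} and Lemma~\ref{alphasumcoplem}.

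First I would estimate $|\mathcal{M}_{n,q,\Delta}|$. Every $f \in \Mnq$ with $\gcd(f,\Delta)\neq 1$ is divisible by some monic irreducible $P \mid \Delta$, and for a fixed monic $P$ of degree $d\le n$ the elements of $\Mnq$ divisible by $P$ are exactly $P\cdot g$ with $g \in \mathcal{M}_{n-d,q}$, so there are $q^{n-d}\le q^{n-1}$ of them. Since $\Delta$ has at most $\deg\Delta < n$ distinct monic irreducible factors, $|\Mnq \setminus \mathcal{M}_{n,q,\Delta}| \le (\deg\Delta)\,q^{n-1} = O_n(q^{n-1})$, whence $|\mathcal{M}_{n,q,\Delta}| = q^n + O_n(q^{n-1}) = q^n\bigl(1+O_n(1/q)\bigr)$ (in particular this is nonzero once $q$ is large in terms of $n$). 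Next, writing $\sum_{f \in \mathcal{M}_{n,q,\Delta}}\alpha(f) = \sum_{f \in \Mnq}\alpha(f) - \sum_{f \in \Mnq,\ \gcd(f,\Delta)\neq 1}\alpha(f)$ and bounding the last sum trivially by $\max(\alpha;n)\cdot|\Mnq\setminus\mathcal{M}_{n,q,\Delta}| = O_{n,\max(\alpha;n)}(q^{n-1})$ gives $\sum_{f \in \mathcal{M}_{n,q,\Delta}}\alpha(f) = q^n\,\EE_{\Mnq}\alpha + O_{n,\max(\alpha;n)}(q^{n-1})$.

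Finally I would divide the numerator estimate by the denominator estimate, using $|\EE_{\Mnq}\alpha|\le \max(\alpha;n)$ and $1/(1+O_n(1/q)) = 1+O_n(1/q)$, to conclude $\EE_{\mathcal{M}_{n,q,\Delta}}\alpha = \EE_{\Mnq}\alpha + O_{n,\max(\alpha;n)}(1/q)$. There is essentially no obstacle here; the only points deserving a word of care are that the dependence on $\deg\Delta$ is harmless since $\deg\Delta<n$, and that for the finitely many prime powers $q$ too small for the above expansions the two means differ by at most $2\max(\alpha;n)$, so the stated estimate still holds after enlarging the implied constant.
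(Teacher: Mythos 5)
Your argument is correct and is essentially the same elementary counting argument as the paper's: both boil down to the fact that $|\Mnq \setminus \mathcal{M}_{n,q,\Delta}| = q^n\bigl(1-\phi(\Delta)/|\Delta|\bigr) = O_n(q^{n-1})$, which you obtain by a union bound over irreducible factors while the paper invokes its Lemma~\ref{lemphi} estimate for $|\Delta|/\phi(\Delta)$. The paper also decomposes $\EE_{\mathcal{M}_{n,q,\Delta}}\alpha - \EE_{\Mnq}\alpha$ directly as $S_1+S_2$ rather than estimating numerator and denominator separately, but this is only a cosmetic difference.
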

\begin{proof}
We have
\begin{equation}\label{mnqmnqdel0}
\EE_{\mathcal{M}_{n,q,\Delta}} \alpha -
\EE_{\Mnq} \alpha   = S_1+S_2,
\end{equation}
where
\begin{equation*}
S_1 = (\frac{1}{q^{n-\deg \Delta} \phi(\Delta)} - \frac{1}{q^n}) \sum_{f \in \mathcal{M}_{n,q,\Delta}} \alpha(f), \qquad S_2 = -\frac{1}{q^n} \sum_{f \in \Mnq \setminus \mathcal{M}_{n,q,\Delta}} \alpha(f).
\end{equation*}
We have, by \eqref{weakphi},
\begin{equation}\label{mnqmnqdel1}
|S_1|, |S_2| \le (1-\frac{\phi(\Delta)}{|\Delta|}) \max(\alpha;n) = O_{n,\max(\alpha;n)}\Big(\frac{1}{q}\Big).
\end{equation}
We conclude the proof from \eqref{mnqmnqdel0} and \eqref{mnqmnqdel1}.
\end{proof}

\begin{lemma}\label{reducprimodd}
Let $\alpha, \beta \colon \mathcal{M}_q \to \mathbb{C}$. Let $n$ be a positive integer and let $\Delta$ be a polynomial in $\mathcal{A}_q$ of degree $<n$. Let $g \in (\mathcal{M}_q/R_{n-\deg \Delta,\Delta})^{\times}$.
We have
\begin{equation}\label{simpsumchig}
\begin{split}
\frac{1}{q^{2(n-\deg \Delta)} \phi^2(\Delta)}& \sum_{\chi \in G(R_{n-\deg \Delta,\Delta}) } \chi(g) S(n,\alpha,\chi) \overline{S(n,\beta,\chi)} \\
&=  \EE_{\Mnq} \alpha \cdot \EE_{\Mnq} \overline{\beta} \\
& \qquad +  \frac{1}{q^{2(n-\deg \Delta)} \phi^2(\Delta)} \sum_{\substack{\chi \in G(R_{n-\deg \Delta,\Delta})\\\text{odd and primitive} }} \chi(g) S(n,\alpha,\chi) \overline{S(n,\beta,\chi)}\\
& \qquad +  O_{n,\max(\alpha;n),\max(\beta;n)}\Big(\frac{1}{q}\Big).
\end{split}
\end{equation}
\end{lemma}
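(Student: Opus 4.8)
The plan is to partition the sum over $\chi\in G(R_{n-\deg\Delta,\Delta})$ into three groups and estimate them separately: (i) the principal character $\chi_0$; (ii) the odd primitive characters; (iii) everything else. Since $\deg\Delta<n$ we have $n-\deg\Delta\ge 1$, so by convention every character modulo $R_{n-\deg\Delta,\Delta}$ is odd, and hence group (iii) is precisely the set of non-primitive characters other than $\chi_0$. Group (ii) contributes exactly the sum displayed on the right of \eqref{simpsumchig}, so the lemma reduces to showing that the $\chi_0$-term equals $\EE_{\Mnq}\alpha\cdot\EE_{\Mnq}\overline{\beta}$ up to an admissible error, and that group (iii) contributes $O_{n,\max(\alpha;n),\max(\beta;n)}(1/q)$.

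First I would dispose of the $\chi_0$-term. Since $g$ is a unit modulo $R_{n-\deg\Delta,\Delta}$ we have $\chi_0(g)=1$, and since $\mathcal{M}_{n,q,\Delta}=\{f\in\Mnq:\gcd(f,\Delta)=1\}$ is a representative set modulo $R_{n-\deg\Delta,\Delta}$ — because $(n-\deg\Delta)+\deg\Delta=n$ — it has $q^{n-\deg\Delta}\phi(\Delta)$ elements, whence
\begin{equation*}
S(n,\alpha,\chi_0)=\sum_{f\in\mathcal{M}_{n,q,\Delta}}\alpha(f)=q^{n-\deg\Delta}\phi(\Delta)\,\EE_{\mathcal{M}_{n,q,\Delta}}\alpha ,
\end{equation*}
and likewise $\overline{S(n,\beta,\chi_0)}=q^{n-\deg\Delta}\phi(\Delta)\,\EE_{\mathcal{M}_{n,q,\Delta}}\overline{\beta}$. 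Substituting, the $\chi_0$-term collapses to $\EE_{\mathcal{M}_{n,q,\Delta}}\alpha\cdot\EE_{\mathcal{M}_{n,q,\Delta}}\overline{\beta}$. Applying Lemma~\ref{lemmntomnq} to both $\alpha$ and $\overline{\beta}$, and using that each of $\EE_{\mathcal{M}_{n,q,\Delta}}\alpha$, $\EE_{\Mnq}\alpha$ is bounded by $\max(\alpha;n)$ and each of $\EE_{\mathcal{M}_{n,q,\Delta}}\overline{\beta}$, $\EE_{\Mnq}\overline{\beta}$ by $\max(\beta;n)$, this product equals $\EE_{\Mnq}\alpha\cdot\EE_{\Mnq}\overline{\beta}+O_{n,\max(\alpha;n),\max(\beta;n)}(1/q)$.

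Next I would bound group (iii). By the bound on the number of non-primitive characters recalled in \S\ref{secchars}, together with the fact that $G(R_{n-\deg\Delta,\Delta})$ has no even characters when $n-\deg\Delta\ge 1$, the number of characters in group (iii) is $O_{\deg\Delta}\big(|G(R_{n-\deg\Delta,\Delta})|/q\big)=O_n\big(q^{n-\deg\Delta}\phi(\Delta)/q\big)$. Each such $\chi$ is $\ne\chi_0$, so $|\chi(g)|\le 1$ and, since the modulus $R_{n-\deg\Delta,\Delta}$ satisfies $\ell+\deg M=n$, the first part of Lemma~\ref{lemboundfact} gives $|S(n,\alpha,\chi)|,\,|S(n,\beta,\chi)|=O_{n,\max(\alpha;n),\max(\beta;n)}(q^{n/2})$. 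Hence the contribution of group (iii) is at most
\begin{equation*}
\frac{1}{q^{2(n-\deg\Delta)}\phi^2(\Delta)}\cdot O_n\!\Big(\frac{q^{n-\deg\Delta}\phi(\Delta)}{q}\Big)\cdot O_{n,\max(\alpha;n),\max(\beta;n)}\big(q^{n}\big)=O_{n,\max(\alpha;n),\max(\beta;n)}\!\Big(\frac{q^{n}}{q\cdot q^{n-\deg\Delta}\phi(\Delta)}\Big),
\end{equation*}
and since $q^{n-\deg\Delta}\phi(\Delta)\gg_{\deg\Delta} q^{n}$ (indeed $\phi(\Delta)\ge q^{\deg\Delta}/2^{\deg\Delta}$), this is $O_{n,\max(\alpha;n),\max(\beta;n)}(1/q)$. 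Adding the three groups yields \eqref{simpsumchig}.

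The only non-formal ingredient is the square-root saving $|S(n,\alpha,\chi)|=O(q^{n/2})$ for $\chi\ne\chi_0$, i.e. the first part of Lemma~\ref{lemboundfact}, which ultimately comes from the Riemann Hypothesis for the $L$-functions of Hayes characters; the trivial bound $|S(n,\alpha,\chi)|\le\max(\alpha;n)\,|\mathcal{M}_{n,q,\Delta}|$ is useless here, as there are of order $q^{n-\deg\Delta-1}\phi(\Delta)$ non-primitive characters, far too many to absorb. I expect this to be the step where care is needed; the rest is bookkeeping, with Lemma~\ref{lemmntomnq} supplying the passage between averaging over $\mathcal{M}_{n,q,\Delta}$ and over $\Mnq$.
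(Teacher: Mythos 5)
Your proof is correct and follows essentially the same route as the paper's: isolate the $\chi_0$-term, identify it via Lemma~\ref{lemmntomnq} with $\EE_{\Mnq}\alpha\cdot\EE_{\Mnq}\overline{\beta}$ up to $O(1/q)$, keep the odd-primitive sum as is, and bound the remaining (non-principal, non-primitive) characters by combining the counting estimate from \S\ref{secchars} with the square-root cancellation of the first part of Lemma~\ref{lemboundfact}. You additionally spell out that $\ell=n-\deg\Delta\ge 1$ forces every character modulo $R_{n-\deg\Delta,\Delta}$ to be odd, so the paper's phrase ``even or not primitive'' reduces to ``not primitive'' — a useful clarification, but not a different argument.

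One small remark, applying equally to the paper and to you: the first part of Lemma~\ref{lemboundfact}, which both proofs invoke, is stated for \emph{factorization} functions $\alpha$, while Lemma~\ref{reducprimodd} is stated for arbitrary $\alpha,\beta\colon\mathcal{M}_q\to\mathbb{C}$. As written the hypothesis is too weak to justify the application of Lemma~\ref{lemboundfact}; in the paper's single use of this lemma (in the proof of Theorem~\ref{thmmn}) the functions are indeed factorization functions, so the discrepancy is harmless in practice, but the statement of Lemma~\ref{reducprimodd} should really carry that hypothesis.
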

\begin{proof}
The term corresponding to $\chi=\chi_0$ in the left hand side of \eqref{simpsumchig} contributes, by Lemma~\ref{lemmntomnq},
\begin{equation*}
\EE_{\mathcal{M}_{n,q,\Delta}} \alpha \cdot \EE_{\mathcal{M}_{n,q,\Delta}} \overline{\beta} = \EE_{\Mnq} \alpha \cdot \EE_{\Mnq} \overline{\beta} + O_{n,\max(\alpha;n),\max(\beta;n)}\Big(\frac{1}{q}\Big).
\end{equation*}
The terms corresponding to $\chi \in G(R_{n-\deg \Delta,\Delta}) \setminus \{\chi_0\}$ in the left hand side of \eqref{simpsumchig}, which are even or not primitive, contribute, by \S\ref{secchars} and the first part of Lemma~\ref{lemboundfact},
\begin{equation*}
O_{n,\max(\alpha;n),\max(\beta;n)}\Big(\frac{1}{q}\Big).
\end{equation*}
These two estimates conclude the proof of the lemma.
\end{proof}

\subsection{Hidden symmetry}
The following key proposition introduces an action of $\FF_q^{\times}$ on $G(R_{\ell,1})$, which preserves primitivity and $L$-functions.
\begin{proposition}\label{propsym}
Let $\ell$ be a positive integer. Let $\chi \in G(R_{\ell,1})$ be a primitive character. For any $c \in \FF_q^{\times}$, define a function $\chi_{c} \colon \mathcal{M}_q \to \mathbb{C}$ by
\begin{equation*}
\chi_{c}(f) = \chi(f(c T)/{c}^{\deg f}).
\end{equation*}
Then $\chi_{c}$ is well defined on $\mathcal{M}_q / R_{\ell,1}$ and in fact is a primitive character in $G(R_{\ell, 1})$. Moreover,
\begin{equation*}
\Theta_{\chi} = \Theta_{\chi_{c}}.
\end{equation*}
\end{proposition}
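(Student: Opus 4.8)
The plan is to verify the three assertions in turn: that $\chi_c$ descends to $\mathcal{M}_q/R_{\ell,1}$ and is a character, that it is primitive, and that $\Theta_{\chi} = \Theta_{\chi_c}$. First I would note that the map $\sigma_c \colon f(T) \mapsto f(cT)/c^{\deg f}$ is a monoid endomorphism of $\mathcal{M}_q$: it is multiplicative since $\deg$ is additive, it fixes $1$, and it sends a monic polynomial of degree $d$ to a monic polynomial of degree $d$. Crucially, $\sigma_c$ preserves the top $\ell$ next-to-leading coefficients up to the invertible scalings $c^{-1}, c^{-2}, \ldots, c^{-\ell}$, so it respects the equivalence relation $R_{\ell,1}$; more precisely, $\sigma_c$ induces a group automorphism of $(1+T\FF_q[T])/(1+T^{\ell+1}\FF_q[T]) \cong (\mathcal{M}_q/R_{\ell,1})^\times$, namely the one sending $1 + a_1 T + a_2 T^2 + \cdots$ to $1 + (a_1/c) T + (a_2/c^2) T^2 + \cdots$. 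Hence $\chi_c = \chi \circ \sigma_c$ is again a character in $G(R_{\ell,1})$, being the composition of a character with an automorphism.

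Next, primitivity. Since $M=1$ here, primitivity of $\chi$ modulo $R_{\ell,1}$ means exactly that $\chi \notin G(R_{\ell-1,1})$, i.e. $\chi$ is nontrivial on the kernel of the projection $(\mathcal{M}_q/R_{\ell,1})^\times \to (\mathcal{M}_q/R_{\ell-1,1})^\times$. This kernel is the subgroup of elements $1 + a_\ell T^\ell$, and the automorphism induced by $\sigma_c$ preserves it (it acts by $a_\ell \mapsto a_\ell / c^\ell$, which is a bijection of this one-parameter subgroup since $c \in \FF_q^\times$). Therefore $\chi_c$ is nontrivial on this kernel as well, so $\chi_c \notin G(R_{\ell-1,1})$, i.e. $\chi_c$ is primitive.

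Finally, the equality of unitarized Frobenii. Since $\chi$ and $\chi_c$ are both primitive modulo $R_{\ell,1}$, their $L$-functions have degree $\ell - 1$, and it suffices to show $L(u,\chi) = L(u,\chi_c)$; the factorization \eqref{ldecomproot} then forces the same multiset of inverse roots, hence $\Theta_\chi = \Theta_{\chi_c}$ as conjugacy classes in $\mathrm{U}(\ell-1)$ (note odd primitive characters have $a(\chi)=0$; and in any case $a(\chi)$ is read off from $L$). To compare $L$-functions I would use the Euler product \eqref{eulerlchi}, or more directly the defining series $L(u,\chi) = \sum_{f \in \mathcal{M}_q} \chi(f) u^{\deg f}$: applying the substitution $f \mapsto \sigma_c(f)$, which is a degree-preserving bijection of $\mathcal{M}_q$ (its inverse is $\sigma_{c^{-1}}$), gives
\begin{equation*}
L(u,\chi_c) = \sum_{f \in \mathcal{M}_q} \chi(\sigma_c(f)) u^{\deg f} = \sum_{g \in \mathcal{M}_q} \chi(g) u^{\deg g} = L(u,\chi),
\end{equation*}
as desired. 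The only mild subtlety — and the step I would be most careful about — is confirming that $\sigma_c$ really is a well-defined degree-preserving bijection of $\mathcal{M}_q$ compatible with $R_{\ell,1}$ and its filtration; once that bookkeeping is in place everything else is formal. One should also double-check the hypothesis $\ell \ge 1$ is what is needed (so that the projection to $R_{\ell-1,1}$ makes sense), which matches the statement.
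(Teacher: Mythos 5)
Your proof is correct and follows essentially the same route as the paper's: show $\sigma_c$ respects the filtration, deduce $\chi_c \in G(R_{\ell,1})$, and establish $L(u,\chi)=L(u,\chi_c)$ by the degree-preserving substitution $f\mapsto f(cT)/c^{\deg f}$, from which $\Theta_\chi=\Theta_{\chi_c}$ follows. The one place where you diverge is the primitivity step: you argue directly that the automorphism $\sigma_c$ preserves the kernel $\{1+a_\ell T^\ell\}$ of the projection $(\mathcal{M}_q/R_{\ell,1})^\times \to (\mathcal{M}_q/R_{\ell-1,1})^\times$, so $\chi_c$ remains nontrivial on it; the paper instead reads primitivity off from $\deg L(u,\chi_c) = \deg L(u,\chi) = \ell-1$ after already establishing the $L$-function equality. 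Both are valid; your version is slightly more self-contained (it does not route through the functional-equation fact that primitivity is equivalent to $\deg L = \ell+\deg M - 1$), while the paper's is terser given that the $L$-function identity is needed anyway.
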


\begin{proof}
Fix $c \in \FF_q^{\times}$. Let $f_1,f_2 \in \mathcal{M}_q$ be polynomials such that $f_1 \equiv f_2 \bmod R_{\ell, 1}$. Then $f_1,f_2$ have the same first $\ell$ next-to-leading coefficients. The $i$-th next to leading coefficient of $f_j(c T)/{c}^{\deg f_j}$ ($j \in \{1,2\}$) is the $i$-th next-to-leading coefficient of $f_j(T)$, divided by ${c}^i$. Thus, $f_1(c T)/{c}^{\deg f_1} \equiv  f_2(c T)/{c}^{\deg f_2} \bmod R_{\ell,1}$. This shows that $\chi_{c}$ can be regarded as a function of $\mathcal{M}_q / R_{\ell,1}$. By definition, $\chi_{c}$ is multiplicative, and it takes $1$ to $1$, so $\chi_{c} \in G(R_{\ell,1})$. 

We now establish $\Theta_{\chi} = \Theta_{\chi_{c}}$. The coefficients of $u^i$ in $L(u,\chi)$  and $L(u,\chi_{c})$ are given by $\sum_{f \in \mathcal{M}_{i,q}} \chi(f(T))$ and $\sum_{f \in \mathcal{M}_{i,q}} \chi(f(c T)/{c}^i)$, respectively. The map $f \mapsto f(c T)/{c}^i$ is a permutation of $\mathcal{M}_{i,q}$, whose inverse is given by $f \mapsto f(T/c){c}^i$. Thus, $L(u,\chi) = L(u,\chi_{c})$ and the corresponding matrices must coincide. As $\deg L(u,\chi_{c}) = \deg L(u,\chi) = \ell-1$, it follows that $\chi_{c}$ is a primitive character.
\end{proof}
\begin{lemma}\label{sumsunderlambda}
Let $\ell$ be a positive integer. Let $\chi \in G(R_{\ell,1})$. Let $c \in \FF_q^{\times}$. For any factorization function $\alpha$, we have
\begin{equation*}
S(n,\alpha,\chi) = S(n,\alpha,\chi_{c}).
\end{equation*}
\end{lemma}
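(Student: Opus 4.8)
The plan is to use the same change-of-variable that drives Proposition~\ref{propsym}: the substitution $T\mapsto cT$, renormalized to stay monic, is a degree-preserving ring automorphism of $\FF_q[T]$, so it permutes $\Mnq$ \emph{and} preserves extended factorization types, hence leaves every factorization function unchanged. Since the two facts ``permutes $\Mnq$'' and ``preserves $\alpha$'' are exactly what is needed to reindex the sum, the lemma follows immediately.

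Concretely, first I would unwind the definitions. Every $f$ occurring in $S(n,\alpha,\chi_c)$ has degree $n$, so $\chi_c(f)=\chi(f(cT)/c^{n})$ and
\[
S(n,\alpha,\chi_c)=\sum_{f\in\Mnq}\alpha(f)\,\chi\!\left(\frac{f(cT)}{c^{n}}\right).
\]
Write $\sigma_c\colon\Mnq\to\Mnq$ for $\sigma_c(f)(T)=f(cT)/c^{n}$. As already observed in the proof of Proposition~\ref{propsym}, $\sigma_c$ is a bijection of $\Mnq$, with inverse $g(T)\mapsto c^{n}g(T/c)$ (this also shows $\chi_c\in G(R_{\ell,1})$, so the left-hand side is a legitimate instance of \eqref{defsn}).

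The one point requiring care is the invariance $\alpha\circ\sigma_c=\alpha$. Given the prime factorization $f=c_0\prod_{i=1}^{k}P_i^{e_i}$ with $c_0\in\FF_q^{\times}$ and $P_i\in\mathcal{P}_q$ distinct, each $P_i(cT)$ is irreducible of degree $\deg P_i$ with leading coefficient $c^{\deg P_i}$, hence $P_i(cT)=c^{\deg P_i}\tilde P_i$ for a unique $\tilde P_i\in\mathcal{P}_q$ with $\deg\tilde P_i=\deg P_i$; collecting the unit factors and using $\sum_i e_i\deg P_i=\deg f=n$ gives $\sigma_c(f)=c_0\prod_i\tilde P_i^{\,e_i}$. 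Therefore $\lambda_{\sigma_c(f)}=\{(\deg\tilde P_i,e_i):1\le i\le k\}=\{(\deg P_i,e_i):1\le i\le k\}=\lambda_f$, and since $\alpha$ is a factorization function, $\alpha(\sigma_c(f))=\alpha(f)$.

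Finally I would reindex the sum by $g=\sigma_c(f)$:
\[
S(n,\alpha,\chi_c)=\sum_{f\in\Mnq}\alpha(\sigma_c(f))\,\chi(\sigma_c(f))=\sum_{g\in\Mnq}\alpha(g)\,\chi(g)=S(n,\alpha,\chi),
\]
which is the claim. There is no genuine obstacle here; the only substantive step is the factorization-type computation above showing $\alpha\circ\sigma_c=\alpha$, and everything else is bookkeeping.
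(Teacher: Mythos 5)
Your proof is correct and follows essentially the same route as the paper: the paper likewise combines the bijection $f\mapsto f(cT)/c^{\deg f}$ on $\Mnq$ with the observation that this map preserves extended factorization types, and reindexes the sum. You merely spell out the factorization-type computation in more detail and apply the invariance of $\alpha$ before rather than after the change of variable, which is an immaterial reordering.
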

\begin{proof}
Since $f(T), f(c T)/c^{\deg f}$ have the same extended factorization type for any $c \in \FF_q^{\times}$, and the inverse of $f \mapsto f(c T)/c^{\deg f}$ is $f \mapsto f(T/c)c^{\deg f}$, we have
\begin{equation*}
\begin{split}
S(n,\alpha,\chi_{c})&=\sum_{f \in \Mnq} \alpha(f) \chi( f(c T)/c^n) 
=\sum_{f \in \Mnq} \alpha(f(T/c)c^n) \chi(f)\\ 
&= \sum_{f \in \Mnq} \alpha(f) \chi( f) = S(n,\alpha,\chi),
\end{split}
\end{equation*}
as needed.
\end{proof}
\subsection{Conclusion of proof}
Applying Proposition~\ref{lemcorexp2} with $\Delta \in \FF_q^{\times}$, we find that 
\begin{equation}\label{propwithdel1}
\EE_{f\in \Mnq} \alpha_q(f) \overline{\beta_q(f+\Delta)} = \frac{1}{q^{2n}} \sum_{\chi \in G(R_{n,1}) } \chi(g) S(n,\alpha_q,\chi) \overline{S(n,\beta_q,\chi)},
\end{equation}
where $g$ may be taken to be $g=T^n +\Delta$. Applying Lemma~\ref{reducprimodd} to the right hand side of \eqref{propwithdel1}, we find that
\begin{equation}\label{propwithdel2}
\mathrm{Cov}_{\mathcal{M}_q}(\alpha_q,\beta_q;n,\Delta) =  \frac{1}{q^{2n}} \sum_{\substack{\chi \in G(R_{n,1})\\\text{primitive} }} \chi(T^n+\Delta) S(n,\alpha_q,\chi) \overline{S(n,\beta_q,\chi)}+ O_{n,\max(\alpha_q;n),\max(\beta_q;n)}\Big(\frac{1}{q}\Big) .
\end{equation}
We claim that the multiset $A=\{ \chi_{c} : c \in \FF_q^{\times}, \chi \in G(R_{n,1})\}$ consists of $q-1$ copies of $G(R_{n,1})$. Indeed, the map $\chi \mapsto \chi_{c}$ is a bijection for any $c \in \FF_q^{\times}$. Thus, in \eqref{propwithdel2} we may sum over primitive characters in $A$ and divide by $q-1$, instead of summing over primitive characters in $G(R_{n,1})$, and obtain from Lemma~\ref{sumsunderlambda}: 
\begin{equation}\label{covtwistedav}
\mathrm{Cov}_{\mathcal{M}_q}(\alpha_q,\beta_q;n,\Delta)= \frac{\sum_{\substack{\chi \in G(R_{n,1})\\\text{primitive} }} \frac{\sum_{c \in \FF_q^{\times}}\chi_{c}(T^n+\Delta)}{q-1} S(n,\alpha_q,\chi) \overline{S(n,\beta_q,\chi)}}{q^{2n}} +O_{n,\mathrm{max} (\alpha ; n),\mathrm{max} (\beta ; n)}\Big(\frac{1}{q} \Big).
\end{equation}
When $\chi \in G(R_{n,1})$, $\psi_{\chi}(x):=\chi(T^n+x)$ is an additive character of $\FF_q$, since $(T^n+x_1)(T^n+x_2)\equiv T^n+x_1+x_2 \bmod R_{n,1}$ and $(T^n+x)^p \equiv 1 \bmod R_{n,1}$. Moreover, we claim that if $\chi$ is primitive then $\psi_{\chi}$ is non-trivial. Otherwise, whenever $f \equiv g \bmod R_{n-1,1}$ we may write $f \equiv g \cdot (T^n+x) \bmod R_{n,1}$ for some $x \in \FF_q$, and then $\chi(f)=\chi(g)\chi(T^n+x)=\chi(g)$, implying $\chi$ is not primitive, a contradiction. Thus, if we set
\begin{equation*}
A(\chi,\Delta)=\frac{\sum_{c \in \FF_q^{\times}}\chi_{c}(T^n+\Delta)}{\sqrt{q}}=\frac{\sum_{c \in \FF_q^{\times}}\chi(T^n+\frac{\Delta}{c^n})}{\sqrt{q}}=\frac{\sum_{c \in \FF_q^{\times}}\psi_{\chi}(\frac{\Delta}{c^n})}{\sqrt{q}}=\frac{\sum_{c \in \FF_q^{\times}}\psi_{\chi}(\Delta c^n)}{\sqrt{q}},
\end{equation*}
then $|A(\chi,\Delta)| \le n$ by Weil's bound on additive character sums \cite[Thm.~2E]{schmidt1976}. We express \eqref{covtwistedav} as
\begin{equation}\label{covtwistedav2}
\mathrm{Cov}_{\mathcal{M}_q}(\alpha_q,\beta_q;n,\Delta)=\frac{\sqrt{q}}{q-1} \frac{\sum_{\substack{\chi \in G(R_{n,1})\\\text{primitive} }} A(\chi,\Delta) S(n,\alpha_q,\chi) \overline{S(n,\beta_q,\chi)}}{q^{2n}} +O_{n,\mathrm{max} (\alpha ; n),\mathrm{max} (\beta ; n)}\Big(\frac{1}{q} \Big).
\end{equation}
The number of characters over which we sum and satisfy $\chi^2 =\chi_0$ is $O(q^{\frac{n-1}{2}})$ be Lemma~\ref{lemquadchars}, and so by Lemma~\ref{lemboundfact}, their total contribution to the right hand side of \eqref{covtwistedav2} is $O_{\max(\alpha_q;n),\max(\beta_q;n),n}(q^{-\frac{n}{2}})$, which can be absorbed in the error term. From now on we ignore these characters when it will be convenient for us.

If either $\alpha_q$ or $\beta_q$ is supported on non-squarefrees, then $S(n,\alpha_q,\chi) \overline{S(n,\beta_q,\chi)} = O_{\max(\alpha_q;n),\max(\beta_q;n),n}(q^{n-\frac{1}{2}})$ by Lemma~\ref{lemboundfact}, and the right hand side of \eqref{covtwistedav2} is $O_{\max(\alpha_q;n),\max(\beta_q;n),n}(\frac{1}{q})$, as needed. Thus, since both $\EE_{f\in \Mnq} \alpha_q(f) \overline{\beta_q(f+\Delta)}$ and $S(n,\alpha_q,\chi) \overline{S(n,\beta_q,\chi)}$ are linear in $\alpha_q$ and conjugate-linear in $\beta_q$, it suffices to consider the case that $\alpha_q = \chi_{\lambda_1,q}$ and $\beta_q = \chi_{\lambda_2,q}$, where $\lambda_1,\lambda_2 \vdash n$. We then have, by Lemma~\ref{lemschur}, $
S(n,\alpha_q,\chi) = q^{n/2} s_{\lambda_1'}(\Theta_{\chi}) +O_{n}(q^{\frac{n-1}{2}})$ and $S(n,\beta_q,\chi) = q^{n/2} s_{\lambda_2'}(\Theta_{\chi}) + O_n(q^{\frac{n-1}{2}})$, and it remains to show that
\begin{equation}\label{eq:rhoscancel}
 q^{-n}\sum_{\substack{\chi \in G(R_{n,1})\\\text{primitive} }} A(\chi,\Delta)(\chi) s_{\lambda_1^{'}}(\Theta_{\chi})\overline{s_{\lambda_2^{'}}(\Theta_{\chi})} =O_{n,\mathrm{max} (\alpha ; n),\mathrm{max} (\beta ; n)}\Big(\frac{1}{q} \Big).
\end{equation}
By decomposing $s_{\lambda_1'} \overline{s_{\lambda_2'}}$ as a linear combination of irreducible characters of $\mathrm{PU}(n-1)$, we may apply Theorem~\ref{thmequishortgauss} with $\ell=n$ and conclude that \eqref{eq:rhoscancel} holds, which concludes the proof of the theorem. \qed
\section{Applications}
\subsection{Proof of Theorem~\ref{thmp}}\label{secprofthmp}
From Theorems~\ref{maintermcovthm} and \ref{thmmn} applied to the functions $\alpha_q=\beta_q=\Lambda_q$, together with the calculation of the constants in Corollary \ref{corhighercoeff}, we obtain the results of Theorem~\ref{thmp}. \qed
\subsection{Proof of Theorem~\ref{thmmu}}\label{secprofthmmu}
From Theorems~\ref{maintermcovthm} and \ref{thmmn} applied to the functions $\alpha_q=\beta_q=\mu_q$, together with the calculation of the constants in Corollary \ref{corhighercoeff}, we obtain the results of Theorem~\ref{thmmu}. \qed
\subsection{Proof of Theorem~\ref{thmd}}\label{secprofthmd}
From Theorems~\ref{maintermcovthm} and \ref{thmmn} applied to the functions $\alpha_q=d_{k,q}$ and $\beta_q=d_{l,q}$, together with the calculation of the constants in Corollary \ref{corhighercoeff}, we obtain the results of Theorem~\ref{thmd}. \qed 
\subsection{Consistency of Theorem~\ref{thmp} with the Hardy-Littlewood Conjecture}\label{twprdisc}
Let $\Lambda$ be the usual von Mangoldt function, defined on the positive integers. The Hardy-Littlewood Conjecture \cite{hardy1923} states that for any even, non-zero integer $\Delta$, 
\begin{equation}\label{hlconj}
\frac{\sum_{n \le x}\Lambda(n)\Lambda(n+\Delta)}{x} \sim \mathfrak{S}_{\Delta}, \qquad (x \to \infty),
\end{equation}
where the constant $\mathfrak{S}_{\Delta}$ is defined as the following product over primes, which converges to a positive number:
\begin{equation*}
\mathfrak{S}_{\Delta} = \prod_{p \mid \Delta} \frac{1-\frac{1}{p}}{(1-\frac{1}{p})^2} \prod_{p \nmid \Delta} \frac{1-\frac{2}{p}}{(1-\frac{1}{p})^2}.
\end{equation*}
In the function field setting, for a prime power $q>2$, the same heuristics that suggest \eqref{hlconj} also suggest that
\begin{equation}\label{hlconjff}
\frac{\sum_{f \in \Mnq}\Lambda_q(f)\Lambda_q(f+\Delta)}{q^n} \sim  \mathfrak{S}_{\Delta,q}, \qquad (q^n \to \infty),
\end{equation}
for all $\Delta \in \mathcal{A}_q$, where $\mathfrak{S}_{\Delta,q}$ is a product over prime polynomials:
\begin{equation*}
\mathfrak{S}_{\Delta,q} = \prod_{P \mid \Delta} \frac{1-\frac{1}{|P|}}{(1-\frac{1}{|P|})^2} \prod_{P \nmid \Delta} \frac{1-\frac{2}{|P|}}{(1-\frac{1}{|P|})^2}.
\end{equation*}
Since $\mathfrak{S}_{\Delta,q}$ does not change if we multiply $\Delta$ by a non-zero scalar, \eqref{hlconjff} implies that
\begin{equation}\label{squareroothlffnm}
\frac{\sum_{f \in \Anq}\Lambda_q(f)\Lambda_q(f+\Delta)}{q^n(q-1)} \sim \mathfrak{S}_{\Delta,q}, \qquad (q^n \to \infty)
\end{equation}
for all $\Delta \in \mathcal{A}_q$. We have the following estimate for $\mathfrak{S}_{1,q}$:
\begin{equation*}
\begin{split}
\mathfrak{S}_{1,q} &= \prod_{P \in \mathcal{P}_q} \frac{1-\frac{2}{|P|}}{(1-\frac{1}{|P|})^2} = \prod_{P} (1-\frac{1}{|P|^2} + O(\frac{1}{|P|^3})) \\
&= \prod_{ i \ge 1} \prod_{P: \deg P=i}(1-\frac{1}{q^{2i}} + O(\frac{1}{q^{3i}})) = (1-\frac{1}{q^2}+O(\frac{1}{q^3}))^q \prod_{i \ge 2} (1+O(q^{-2i}))^{O(q^i)}\\
&= (1-\frac{1}{q} + O(\frac{1}{q^2})) (1+O(\frac{1}{q^2})) = 1-\frac{1}{q} + O(\frac{1}{q^2}),
\end{split}
\end{equation*}
from which we deduce that
\begin{equation}\label{cdelest}
\begin{split}
\mathfrak{S}_{\Delta,q} &= \prod_{P\mid \Delta} \frac{1-\frac{1}{|P|}}{1-\frac{2}{|P|}} \cdot \mathfrak{S}_{1,q} = \prod_{P \mid \Delta, \deg P >1} (1+O(\frac{1}{q^2})) \prod_{P \mid \Delta, \deg P=1} \frac{1-\frac{1}{q}}{1-\frac{2}{q}} \cdot (1-\frac{1}{q} + O(\frac{1}{q^2})) \\
&= (1+O_{\deg \Delta}(q^{-2})) (1+\frac{a_{\Delta,q}}{q} + O_{\deg \Delta}(q^{-2}))(1-\frac{1}{q} +O(\frac{1}{q^2})) \\
&= 1+\frac{-1+a_{\Delta,q}}{q} + O_{\deg \Delta}(q^{-2}).
\end{split}
\end{equation}
The first two terms of the Taylor series of $\mathfrak{S}_{\Delta,q}$, given in \eqref{cdelest}, agree with the main term given in Theorem~\ref{thmp} for $\Lambda_q$.

\subsection{Proof of Theorem~\ref{corolvar}}\label{secthmsumcoeff}
Plugging $\Delta=1$ and $h$ in place of $h+1$ in the second part of Proposition~\ref{twoidengap}, we obtain 
\begin{equation}\label{covdiff2}
\sum_{\delta \in \mathcal{A}_{h,q}} \mathrm{Cov}_{\mathcal{M}_q}(\alpha_q,\beta_q;n,\delta) = \frac{\mathrm{Cov}_{\mathcal{M}_q}(\alpha_q,\beta;n,1,n-h-1)}{q^{h+1}} - \frac{\mathrm{Cov}_{\mathcal{M}_q}(\alpha_q,\beta;n,1,n-h)}{q^{h}}
\end{equation}
for any $0 \le h \le n-1$. The proof is concluded by using Theorem~\ref{thmcovgap} to simplify the right hand side of \eqref{covdiff2}. \qed

\appendix
\section{Equidistribution results}\label{app:secequi}

\begin{theorem}\label{thmequigenarith}
Let $(\ell,M) \in \mathbb{N} \times \mathcal{A}_q$ such that $M$ is squarefree and either $\ell \ge 4$, or $\ell=0$ and $\deg M \ge 2$. Let $\rho$ be an irreducible non-trivial representation of $\mathrm{PU}(\ell+\deg M - 1)$. Then there exists a positive constant $C(\rho)$, depending only on $\rho$, such that
\begin{equation*}
\left| \frac{\sum_{\chi \in G(R_{\ell,M}) \text{ primitive and odd}} \Tr(\rho(\Theta_{\chi}))}{\# \{ \chi \in G(R_{\ell,M}) :  \chi \text{ primitive and odd} \}} \right| \le \frac{C(\rho)}{\sqrt{q}}.
\end{equation*}
\end{theorem}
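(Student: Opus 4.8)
The plan is to realize the family of $L$-functions $L(u,\chi)$, as $\chi$ ranges over the primitive odd characters modulo $R_{\ell,M}$, as the $L$-functions attached to a lisse $\overline{\mathbb{Q}}_\ell$-sheaf on a parameter variety, and then to invoke Deligne's equidistribution theorem. More precisely, using that $M$ is squarefree and that every $\chi \in G(R_{\ell,M})$ factors as $\chi = \chi_1\chi_2$ with $\chi_1$ a short-interval character modulo $R_{\ell,1}$ and $\chi_2$ a Dirichlet character modulo $M$, one constructs a smooth affine $\FF_q$-variety $\mathcal{X}_{\ell,M}$ whose points parametrize the primitive odd $\chi$, together with a lisse sheaf $\mathcal{F}_{\ell,M}$ on it of rank $N := \ell + \deg M - 1$, built from the Artin--Schreier--Witt (Lang-torsor) description of short-interval characters at $\infty$ and from Kummer sheaves ramified only above the primes dividing $M$, in the style of Katz's work on the Keating--Rudnick questions. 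The key features are: $\mathcal{X}_{\ell,M}$ and $\mathcal{F}_{\ell,M}$ are cut out by equations independent of $q$; Frobenius at the point of $\mathcal{X}_{\ell,M}$ corresponding to $\chi$ acts with conjugacy class $\sqrt{q}\,\Theta_\chi$, so that $L(u,\chi) = \det(1 - u\sqrt{q}\,\Theta_\chi)$; and $\mathcal{F}_{\ell,M}$ is pure of weight $1$ by the Riemann Hypothesis (Section~\ref{seclfunc}), so that after a Tate twist we may treat it as pure of weight $0$ with $\Theta_\chi$ unitary. It then remains to show that $\{\Theta_\chi\}$ equidistributes in $\mathrm{PU}(N)$ as $q \to \infty$.

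The heart of the matter, and the step I expect to be the main obstacle, is to prove that the geometric monodromy group $G_{\mathrm{geom}}$ of $\mathcal{F}_{\ell,M}$ is as large as possible, i.e.\ that its image in $\mathrm{PGL}_N$ contains $\mathrm{PSL}_N$. For the pure short-interval family ($\deg M = 0$, $\ell \ge 4$) and the pure Dirichlet family ($\ell = 0$, $M$ squarefree, $\deg M \ge 2$) this big-monodromy statement is due to Katz (in his papers on primitive characters of squarefree conductor and on the Keating--Rudnick question); the lower bounds $\ell \ge 4$ and $\deg M \ge 2$ enter exactly through these inputs. For the remaining case ($\ell \ge 4$, $M$ squarefree of positive degree) I would deduce big monodromy for the combined family by a Goursat--Kolchin--Ribet argument: restricting $\mathcal{F}_{\ell,M}$ to the locus where $\chi_1$ is fixed, respectively where $\chi_2$ is fixed, recovers subfamilies whose monodromy is already known to be big, and one rules out the exceptional possibilities on Katz's classification (finite image, geometric imprimitivity, tensor decomposition, normalizer of a torus) using a pseudoreflection- or unipotent-type local monodromy generator coming from the wild ramification at $\infty$ or the tame ramification at a prime dividing $M$. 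Purity pins the arithmetic monodromy group inside (a conjugate of) $\mathrm{U}(N)$, so the $\Theta_\chi$ indeed lie in $\mathrm{PU}(N)$ and the full $\mathrm{PU}(N)$ is the monodromy group.

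Granting big monodromy, the conclusion follows by the Grothendieck--Lefschetz trace formula and Deligne's Weil~II. Fix an irreducible nontrivial representation $\rho$ of $\mathrm{PU}(N)$ and set $\mathcal{F}_\rho := \rho \circ \mathcal{F}_{\ell,M}$, a lisse sheaf on $\mathcal{X}_{\ell,M}$ pure of weight $0$; since $\rho$ is nontrivial and $G_{\mathrm{geom}}$ is big, $\mathcal{F}_\rho$ has no nonzero geometric invariants or coinvariants, hence $H^{2d}_c(\mathcal{X}_{\ell,M}\otimes\overline{\FF_q},\mathcal{F}_\rho) = 0$ with $d := \dim \mathcal{X}_{\ell,M}$, while $H^i_c$ is mixed of weight $\le i$. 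The trace formula then gives
\begin{equation*}
\Big| \sum_{\chi \text{ primitive and odd}} \Tr(\rho(\Theta_\chi)) \Big| = \Big| \sum_i (-1)^i \Tr\big( \mathrm{Frob}_q \mid H^i_c(\mathcal{X}_{\ell,M}\otimes\overline{\FF_q},\mathcal{F}_\rho) \big) \Big| \le \Big( \sum_i \dim H^i_c \Big)\, q^{\, d - \frac12},
\end{equation*}
whereas $\#\{\chi \text{ primitive and odd}\} = q^{\, d}\,(1 + O_{\ell,\deg M}(1/q))$ by \eqref{eq:number of chars}. Dividing, we obtain a bound $C/\sqrt{q}$ in which $C = \sum_i \dim H^i_c(\mathcal{X}_{\ell,M}\otimes\overline{\FF_q},\mathcal{F}_\rho)$ is bounded uniformly in $q$ — because $\mathcal{X}_{\ell,M}$, $\mathcal{F}_{\ell,M}$ and the ramification of $\mathcal{F}_\rho$ are all independent of $q$ — via the Katz/Bombieri estimates on Betti numbers of constructible sheaves on affine varieties, in terms of $\dim \rho$ and the fixed geometry; this is the constant $C(\rho)$ of the statement (which also depends on the discrete data $\ell$, $\deg M$).
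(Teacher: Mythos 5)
Your proposal takes a genuinely different route from the paper's, and the difference matters. The paper does not build a single parameter variety for the joint family of characters, nor does it prove big monodromy for such a joint family. Instead, it reduces via the structure isomorphism \eqref{structunit} to an \emph{iterated} average: first fix the Dirichlet component $\chi_1$ modulo $M$, view it as a rank-one middle extension sheaf $\mathcal F_{\chi_1}$ on $\mathbb A^1$, and then average only over the short-interval characters $\Lambda$, which do have a natural Witt-vector/Lang-torsor parametrization. The inner average is then exactly the equidistribution theorem of \cite{witttwists} for short-interval twists of a fixed middle extension sheaf, whose big-monodromy content replaces your proposed Goursat--Kolchin--Ribet step. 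The bulk of the paper's proof is devoted to the uniformity that you treat in one sentence: the error constant in \cite{witttwists} is not stated uniformly in the characteristic or in $\chi_1$, so the paper reduces $\rho$ to a summand of $\mathrm{std}^{\otimes m}\otimes\mathrm{std}^{\vee\otimes m}$, uses excision to replace the primitivity locus by affine space, and applies K\"unneth and the projection formula so as to land on Katz's explicit Betti-number bound for an explicitly cut-out closed subset of $\mathbb A^{2m}$ with Kummer-sheaf coefficients.

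Three concrete gaps in your sketch. First, it is not clear how to realize all primitive odd characters modulo $R_{\ell,M}$ as the $\FF_q$-points of a single variety carrying a lisse $\mathcal F_{\ell,M}$: the short-interval characters have an affine parametrization, but the Dirichlet characters modulo $M$ form a finite abelian group, not the points of a positive-dimensional $\FF_q$-scheme whose variation can feed into a monodromy computation; geometrically you would just have a disjoint union of copies of the short-interval parameter space indexed by $\chi_2$, and then the ``joint'' monodromy is already the monodromy of the restricted $\Lambda$-families. Second, Goursat--Kolchin--Ribet is not the right tool here: the $L$-function of $\chi_1\chi_2$ has degree $N=\ell+\deg M-1$ and does not factor into pieces coming from $\chi_1$ and $\chi_2$, so both restricted subfamilies act on the \emph{same} $N$-dimensional space rather than on complementary summands or tensor factors. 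If the restricted $\Lambda$-family (with $\chi_2$ fixed) already has monodromy containing $\mathrm{SL}_N$, which is what \cite{witttwists} shows, then no Goursat argument is needed at all; but establishing that is the real work, and it is not covered by the pure Keating--Rudnick ($M=1$) or pure Dirichlet ($\ell=0$) cases that you cite. Third, the assertion that the Betti numbers are uniformly bounded ``because the geometry is independent of $q$'' is too quick: $\mathcal F_{\chi_1}$ is ramified at the roots of $M$, which vary with $q$ and need not be $\FF_q$-rational, so a priori the relevant varieties and sheaves change with $q$. The paper gets around this by reducing to an explicit hypersurface situation to which Katz's quantitative Betti-number theorem applies, giving a bound depending only on $m$ (hence $\rho$), $\ell$, and $\deg M$.
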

The cases $\ell=0$ and $M=1$ are due to Katz \cite[Cor.~6.9]{katz2013}, \cite[\S8]{WVQKR}.

\begin{proof} Because the case $\ell=0$ is due to Katz, it suffices to handle the case $\ell \ge 4$.

We apply the isomorphism \eqref{structunit}. We can write the average over $\chi$ as an iterated average over, first, characters $\chi_1$ of $(\FF_q[T] / M \FF_q[T] )^{\times}$ of, second, an average over characters $\Lambda $ of $(1+T\FF_q[T])/(1+T^{\ell+1}\FF_q[T])$ and it suffices to prove the same bound for the average over $\Lambda$. We can view $\chi_1$ as a character of the idele class group of $\FF_q(T)$ unramified away from $M \infty$ (with at most tame ramification at $\infty$), hence a character of the Galois group of $\FF_q(T)$ unramified away from $M \infty$, which we view as a rank one Galois representation $V_{\chi_1} $. Let $\mathcal F_{\chi_1}$ be the rank one middle extension sheaf on $\mathbb A^1_{\FF_q}$ associated to $V_{\chi_1}$. 

In \cite[Theorem 1.2 and Theorem 1.3]{witttwists}, families of conjugacy classes $\varphi_{\Lambda}$ in the unitary group $\mathrm{U}(N) = \mathrm{U}(\ell + \deg M -1)$  associated, respectively, to $V_{\chi_1}$ and $\mathcal F_{\chi_1}$ are defined. It is proven in \cite[\S3, proof of Theorem 1.2]{witttwists} that these are equal, so we will use them interchangeably.  Moreover, these conjuacy classes match the ones $\Theta_\chi$ we have defined, i.e. \[\varphi_{\Lambda} = \Theta_\chi\] up to conjugacy. This is because conjugacy classes in $\mathrm{U}(\ell+ \deg M - 1)$ are uniquely determined by their characteristic polynomial, both conjugacy classes are defined such that their characteristic polynomials match certain $L$-functions, and these $L$-functions agree because the $L$-function of a Galois representation associated to a character equals the $L$-function of the character. Hence we can apply the equidistribution result of \cite[Theorem 1.3]{witttwists}. More precisely, we will apply its proof. 

Because $\rho$ is a non-trivial representation of a projective unitary group, it is not one-dimensional. In  \cite[\S6, proof of Theorem 1.3]{witttwists}, it is shown that

\[\left| \frac{\sum_{\Lambda \in S_q} \Tr(\rho(\varphi_{\Lambda}))}{\# \{ \chi \in G(R_{\ell,M}) :  \chi \text{ primitive and odd} \}} \right| =  O\left(   \frac{1}{\sqrt{q}}\right)\]

where $S_q$ is the set of primitive characters of $$(1+T\FF_q[T])/(1+T^{\ell+1}\FF_q[T]),$$ and, as mentioned earlier, $\varphi_{\Lambda}=\theta_\chi$.

However, it is not proved in \cite[\S6]{witttwists} that the constant in the big $O$ is uniform in the characteristic or the choice of $\chi_1$. We do this now using an analogue of the argument in \cite[Lemmas 2.7, 2.8, and 2.9]{representation-theory-moments}.

The constant arises as a sum of Betti numbers of \[ H^i_c(U_{\FF_q },\mathcal V)\] where $\mathcal V$ is a sheaf constructed from the representation $\rho$ and $U_{\mathbb F_q}$ is the open subset of $\mathbb A^{\ell}$ defined in \cite[Definition 4.5]{witttwists}. By \cite[Lemma 4.4]{witttwists} because $V_{\chi_1}$ is tamely ramified at infinity, and thus $\mathcal F_{\chi_1}$ is as well, the subset $U$ consists of the primitive Dirichlet characters.

To check this Betti number boundedness we must dig into the weeds of \'{e}tale cohomology. First note that the associated sheaf $\mathcal V$ is defined in \cite[\S6]{witttwists} as the composition of $V$ with the monodromy representation of a certain lisse sheaf $\mathcal G$. Because this composition is compatible with direct sums and tensor products, and Betti numbers are additive in direct sums, so we can reduce from $\rho$ to any representation of which $\rho$ is a summand. Any representation of the projective unitary group is a summand of the tensor product of $m$ copies of the standard representation of the usual unitary group with $m$ copies of the dual representation, so that is what we will take. In the case where $V$ is the tensor product of $m$ copies of the standard representation and its dual, by definition $\mathcal V$ is the tensor product of $m$ copies of the sheaf $\mathcal G$ with $m$ copies of the its dual. The sheaf $\mathcal G$ arises in \cite[Definition 4.2]{witttwists} as $R^1pr_{2!} (pr_1^* \mathcal F_{\chi_1}  \otimes \mathcal L_{\textrm{univ}})$ for $\mathcal L_{\textrm{univ}}$ a certain lisse sheaf of rank one. We can check that the dual sheaf is $R^1pr_{2!} (pr_1^* \mathcal F^\vee \otimes \mathcal L_{\textrm{univ}}^\vee)$ because they are each lisse pure sheaves and have the same trace function. So it remains to bound the Betti numbers of

\[ H^{i+2m}_c \left( U_{\FF_q }, \left( Rpr_{2!} (pr_1^* \mathcal F \otimes \mathcal L_{\textrm{univ}}) \right)^{\otimes m} \otimes \left( Rpr_{2!} (pr_1^* \mathcal F^\vee \otimes \mathcal L_{\textrm{univ}}^\vee) \right)^{\otimes m}\right) .\]

Observe that $U$ is an open subset of $\mathbb A^{\ell} $, the parameter space of characters with Swan conductor $\leq \ell$, with complement $\mathbb A^{\ell-1}$, parameterizing characters with Swan conductor $\leq \ell-1$. By excision, the compactly supported Betti numbers of $U$ with coefficients in the complex $\left( Rpr_{2!} (pr_1^* \mathcal F \otimes \mathcal L_{\textrm{univ}}) \right)^{\otimes m}\otimes \left( Rpr_{2!} (pr_1^* \mathcal F^\vee \otimes \mathcal L_{\textrm{univ}}^\vee) \right)^{\otimes m}$ are at most the sum of the Betti numbers of these two spaces with coefficients in the same complex. Because the two cases are equivalent with an index shifted by one, it suffices to bound the Betti numbers of $\mathbb A^{\ell}$. 

To do this, we apply the K\"{u}nneth formula and the projection formula, reducing us to

\[ H^{i+2m}_c \left(  \left( \mathbb A^1_{\mathbb F_q} \right)^{2m } \times \mathbb A^{\ell}_{\FF_q} , pr_1^* ( \mathcal F^{\boxtimes m} \boxtimes \mathcal F^{\vee \boxtimes m}) \otimes  \mathcal L_{\textrm{univ}}^{\otimes m} \otimes \mathcal L_{\textrm{univ}}^{\vee \otimes m} \right).\]

Applying the projection formula again, this is 

\[ H^{i+2m}_c \left(  \left( \mathbb A^1_{\mathbb F_q} \right)^{2m } ,\mathcal F^{\boxtimes m} \boxtimes \mathcal F^{\vee \boxtimes m}) \otimes  R pr_{1!} \left(  \mathcal L_{\textrm{univ}}^{\otimes m} \otimes \mathcal L_{\textrm{univ}}^{\vee \otimes m}\right)  \right).\]

In \cite[proof of Lemma 2.7, second equation]{representation-theory-moments} \[R pr_{1!} \left(  \mathcal L_{\textrm{univ}}^{\otimes m} \otimes \mathcal L_{\textrm{univ}}^{\vee \otimes m}\right)  = i_! \mathbb Q_{\ell'} [ -2 \ell] (-\ell)\] where $i$ is the inclusion of the closed set $Z$ in $\left( \mathbb A^1_{\mathbb F_q} \right)^{2m } $ where the first $\ell$ elementary symmetric polynomials in the first $m$ variables equal the first $m$ elementary symmetric polynomials in the last $m$ variables.

By a final application of the projection formula, we end up with \[ H^{i+2m-2\ell }_c \left( Z,  i^* \left( \mathcal F^{\boxtimes m} \boxtimes \mathcal F^{\vee \boxtimes m}\right)  \right).\]

Now $Z$ is a closed set in $\mathbb A^{2m}$ defined by $\ell$ equations of degree at most $\ell$. Because $M$ is squarefree, over a field extension in which it splits, $\chi_1$ is a product of at most $\deg M$ tame characters ramified at one of the roots of $M$ and $\infty$, so $\mathcal F$ is a tensor product of at most degree $M$ tame character sheaves $\mathcal L_{\rho_j}(x-a_j)$, for $a_j$ the roots of $M$. 

The Betti numbers are now bounded by Theorem~12 of \cite{katzbetti}, with $N=2m$, $r= \ell$, $s= 2 m \deg m$, $\delta=0$, $d_i=i$ for $i$ from $1$ to $\ell$, $e_1,\dots,e_s=1$, $f=0$, $F_1,\dots,F_\ell$ the defining equations of $Z$, $G_1,\dots, G_s$ the linear functions $x-a_j$ in the $2m$ variables with $a_j$ the roots of $M$. The Betti number bound is now given by Katz as \[ 3 ( 4m \deg M + \ell + 2) ^{2m+ \ell}\] which has all the desired uniformity properties (noting that $m$ depends only on $\rho$).

\end{proof}

\begin{theorem}\label{thmequishortgauss}
Let $\ell \ge 3$, and if $\ell=3$ assume that the characteristic is not $2$ or $5$. For any $\chi \in G(R_{\ell,1})$ and $\Delta \in \FF_q^{\times}$, set
\begin{equation*}
\psi_{\Delta}(\chi) = \frac{\sum_{c \in \FF_q^{\times}} \chi(T^{\ell}+\Delta c^{\ell})}{\sqrt{q}}.
\end{equation*}
Let $\rho$ be an irreducible representation of $\mathrm{PU}(\ell- 1)$. Then there exists a positive constant $D(\rho)$, depending only on $\rho$, such that
\begin{equation*}
\left| \frac{\sum_{\chi \in G(R_{\ell,1}) \text{ primitive}} \Tr(\rho(\Theta_{\chi})) \psi_{\Delta}(\chi)}{\# \{ \chi \in G(R_{\ell,1}) :  \chi \text{ primitive  \}}} \right| \le \frac{D(\rho)}{\sqrt{q}}.
\end{equation*}
\end{theorem}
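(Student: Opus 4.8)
The plan is to follow the strategy behind the proof of Theorem~\ref{thmequigenarith} and of \cite{witttwists}, \cite{representation-theory-moments} and \cite{katzbetti}, after first absorbing the weight $\psi_\Delta(\chi)$ by an elementary Gauss-sum expansion. I would begin with the structural facts already used in the proof of Theorem~\ref{thmmn}: for $\chi\in G(R_{\ell,1})$ the map $x\mapsto\chi(T^\ell+x)$ is an additive character $\psi_\chi$ of $\FF_q$, and $\psi_\chi$ is nontrivial exactly when $\chi$ is primitive (indeed $\chi$ factors through $R_{\ell-1,1}$ precisely when it is trivial on the kernel $\{[T^\ell+x]:x\in\FF_q\}$ of $\mathcal{M}_q/R_{\ell,1}\to\mathcal{M}_q/R_{\ell-1,1}$, which is the statement that $\psi_\chi$ is trivial). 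Fixing a nontrivial additive character $\psi$ of $\FF_q$, write $\psi_\chi(y)=\psi(\mu_\chi y)$ with $\mu_\chi\in\FF_q^\times$; as $\chi$ ranges over the parameter variety $U$ of primitive short interval characters of conductor $\le\ell$ (the analogue, for $M=1$, of the variety $U$ in the proof of Theorem~\ref{thmequigenarith}), the map $\chi\mapsto\mu_\chi$ is an $\FF_q$-morphism $\mu\colon U\to\mathbb{G}_m$. Expanding $\sum_{c\in\FF_q^\times}\psi(a c^\ell)$ over multiplicative characters gives, with $g'=\gcd(\ell,q-1)$,
\begin{equation*}
\psi_\Delta(\chi)=\frac{-1}{\sqrt q}+\sum_{\substack{\eta^{g'}=1\\ \eta\neq 1}}\frac{\tau(\eta,\psi)}{\sqrt q}\,\overline{\eta}(\mu_\chi\Delta),
\end{equation*}
a sum of at most $\ell-1$ terms with $|\tau(\eta,\psi)|=\sqrt q$. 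When $\rho$ is trivial this already finishes the proof: for each $c\in\FF_q^\times$ one has $\sum_{\chi\text{ prim}}\chi(T^\ell+\Delta c^\ell)=-q^{\ell-1}$ by orthogonality (since $T^\ell+\Delta c^\ell\equiv 1\bmod R_{\ell-1,1}$ but not $\bmod R_{\ell,1}$), so the average of $\psi_\Delta$ over primitive $\chi$ equals exactly $-1/\sqrt q$. Hence we may assume $\rho$ is nontrivial, so $\dim\rho\ge 2$; substituting the expansion reduces the theorem to bounding, by $O_\rho(\#\{\chi\text{ prim}\}/\sqrt q)$, the unweighted sum $\sum_{\chi\text{ prim}}\Tr(\rho(\Theta_\chi))$ and, for each $\eta\neq 1$ of order dividing $g'$, the twisted sum $\sum_{\chi\text{ prim}}\Tr(\rho(\Theta_\chi))\,\overline{\eta}(\mu_\chi)$.

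For the unweighted sum this is the $M=1$ case of the equidistribution input, which for $\ell\ge 4$ is Theorem~\ref{thmequigenarith}. For the twisted sums I would interpret $\overline{\eta}(\mu_\chi)$ as the trace function of the rank-one Kummer sheaf $\mu^*\mathcal{L}_{\overline{\eta}}$ on $U$, so that by the Grothendieck--Lefschetz trace formula
\[
\sum_{\chi\text{ prim}}\Tr(\rho(\Theta_\chi))\,\overline{\eta}(\mu_\chi)=\sum_{i}(-1)^i\Tr\!\big(\mathrm{Frob}_q\mid H^i_c(U_{\overline{\FF}_q},\rho(\mathcal{G})\otimes\mu^*\mathcal{L}_{\overline{\eta}})\big),
\]
where $\mathcal{G}$ is the lisse sheaf on $U$ whose arithmetic Frobenius conjugacy classes are the $\Theta_\chi$ (after unitary normalisation). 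Then $\mathcal{G}$ is pure of weight $0$, so by Deligne's Riemann Hypothesis every $H^i_c$ with $i<2\dim U$ contributes $O(q^{\dim U-1/2})$, which is exactly the size needed; it remains to show $H^{2\dim U}_c$ vanishes, i.e. that $\rho(\mathcal{G})\otimes\mu^*\mathcal{L}_{\overline{\eta}}$ has no geometric monodromy invariants. Here I would invoke Katz's big monodromy theorem for the short interval family: the geometric monodromy of $\mathcal{G}$ has Zariski closure containing $\mathrm{SL}_{\ell-1}$ (for $\ell=3$ this is precisely where $\mathrm{char}\neq 2,5$ is needed), so $\rho$ remains irreducible, of dimension $\ge 2$, when restricted to the geometric monodromy group of $\mathcal{G}$; since twisting an irreducible representation of dimension $\ge 2$ by a one-dimensional character can never yield the trivial representation, $\rho(\mathcal{G})\otimes\mu^*\mathcal{L}_{\overline{\eta}}$ has no invariants. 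The same argument, with $\mu^*\mathcal{L}_{\overline{\eta}}$ replaced by the constant sheaf, also handles the unweighted sum in the remaining case $\ell=3$, where Theorem~\ref{thmequigenarith} does not apply.

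It remains to make $D(\rho)$ uniform in $q$ and $\Delta$. Uniformity in $\Delta$ is immediate, as $\Delta$ enters only through scalars $\overline{\eta}(\mu_\chi\Delta)$ of modulus one. Uniformity in $q$ needs a bound on $\sum_i\dim H^i_c(U_{\overline{\FF}_q},\rho(\mathcal{G})\otimes\mu^*\mathcal{L}_{\overline{\eta}})$ depending only on $\ell$, hence only on $\rho$: exactly as in the proof of Theorem~\ref{thmequigenarith}, I would reduce $\rho$ to a summand of $\mathcal{G}^{\otimes m}\otimes(\mathcal{G}^\vee)^{\otimes m}$, unwind $\mathcal{G}$ as a higher direct image of a character sheaf twisted by the universal sheaf $\mathcal{L}_{\mathrm{univ}}$, apply the K\"unneth and projection formulas (carrying along the extra tame rank-one factor $\mu^*\mathcal{L}_{\overline{\eta}}$, which costs nothing), reduce to the cohomology of an explicit variety cut out by boundedly many equations of bounded degree, and conclude with Katz's effective Betti bounds \cite{katzbetti}, as in \cite[Lemmas 2.7--2.9]{representation-theory-moments}; summing over the at most $\ell-1$ characters $\eta$ is harmless. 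The main obstacle is importing the geometric machinery of \cite{witttwists} into this slightly different setting: one must verify that $U$ and $\mathcal{G}$ behave as in the $M=1$ case of Theorem~\ref{thmequigenarith}, that the leading-coefficient datum $\mu_\chi$ is genuinely an $\FF_q$-morphism $U\to\mathbb{G}_m$ (so that $\overline{\eta}(\mu_\chi)$ is a trace function), and above all that Katz's big monodromy statement is available at conductor $\ell$ as small as $3$ with the stated characteristic restriction; once big monodromy is in hand, the vanishing of the top cohomology, the square-root cancellation, and the uniformity follow as above.
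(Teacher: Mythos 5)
Your proposal agrees with the paper's proof on the first half: the Gauss-sum expansion of $\psi_\Delta(\chi)$, the observation that $\chi\mapsto a_\chi$ (your $\mu_\chi$) is the relevant invariant and is nonzero exactly for primitive $\chi$, the exact treatment of the trivial $\rho$ by orthogonality, and the reduction of the nontrivial case to bounding the twisted sums $\sum_{\chi\text{ prim}}\Tr(\rho(\Theta_\chi))\overline\eta(\mu_\chi)$. From there, however, your route diverges from the paper's in a genuine way. You propose to handle the twisted sums uniformly in $p$ by constructing the Kummer twist $\rho(\mathcal G)\otimes\mu^*\mathcal L_{\overline\eta}$, invoking big monodromy for vanishing of $H^{2\dim U}_c$, and then replicating, in this twisted setting, the effective Betti-number bound obtained in the proof of Theorem~\ref{thmequigenarith} via K\"unneth, projection formula, and Katz's bound from \cite{katzbetti}. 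The paper does something different and, for the uniformity question, noticeably less laborious: for large characteristic ($p>2\ell+1$) it avoids the twisted cohomology entirely and instead piggybacks on the elementary structure of Katz's argument in \cite[Thm.~8.2]{WVQKR}, namely that averaging $\Tr(\rho(\theta_\chi))$ over the linear coefficient of the associated Artin--Schreier polynomial (with the remaining coefficients fixed) already produces square-root cancellation for all but $O(1/q)$ of the fixed choices, and since $a_\chi$ is constant on each such fiber, this cancellation passes through the twist $\chi'(a_\chi)$ for free. Only for the finitely many small $p$ does the paper build the twisted sheaf; there the Betti-number uniformity is handled by the crude (but sufficient) remark that for fixed $(\ell,\rho,p)$ the Betti numbers are independent of $q$, and there are only finitely many small $p$. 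Your approach is more uniform in spirit, but it requires verifying that the extra tame factor $\mu^*\mathcal L_{\overline\eta}$ meshes with the explicit Betti-bound computation in the proof of Theorem~\ref{thmequigenarith}, and in particular that $\mu$ is an $\mathbb F_q$-morphism of controlled degree; the paper in fact proves (via the Witt-vector analysis) that $a_\chi$ is $\ell_0$ times an affine coordinate, so this is true, but you flag it as unverified rather than supply it. In short: same entry point and the same big-monodromy key, but the paper's large-$p$/small-$p$ bifurcation is a simpler path to a uniform $D(\rho)$ than the all-characteristics effective Betti bound you propose.
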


\begin{proof} Let $\operatorname{exp}$ be a fixed additive character of $q$. Because $x \mapsto \chi(T^\ell+ x)$ is an additive character of $\mathbb F_q$, it is $\operatorname{exp}(a_{\chi}x)$ for some $a_{\chi}$ in $\mathbb F_q$. From basic properties of characters, each $a_\chi$ occurs equally often, and $\chi$ is primitive if and only if $a_\chi\neq 0$. We have the Gauss sum relation \[\psi_{\Delta} (\chi) = \frac{\sum_{c \in \FF_q^{\times}} \operatorname{exp} ( a \Delta c^\ell) }{\sqrt{q}} =\frac{-1}{\sqrt{q} }+ \sum_{\substack{ \chi'\colon  \mathbb F_q^\times \to \mathbb C^\times \\ \chi'^{\ell}=1 \\ \theta \neq 1}} \frac{ G(\chi'^{-1},\psi)}{q}  \chi' (a_\chi \Delta).\] It follows that

\[ \frac{\sum_{\chi \in G(R_{\ell,1}) \text{ primitive}} \Tr(\rho(\Theta_{\chi})) \psi_{\Delta}(\chi)}{\# \{ \chi \in G(R_{\ell,1}) :  \chi \text{ primitive  \}}}  \] \[=   \sum_{\substack{ \chi'\colon \mathbb F_q^\times \to \mathbb C^\times \\ \chi'^{\ell}=1 \\ \theta \neq 1}} \frac{ G(\chi'^{-1},\psi)}{q}  \chi'(\Delta)  \frac{\sum_{\chi \in G(R_{\ell,1}) \text{ primitive}} \Tr(\rho(\Theta_{\chi})) \chi'( a_\chi) }{\# \{ \chi \in G(R_{\ell,1}) :  \chi \text{ primitive  \}}} -  \frac{\sum_{\chi \in G(R_{\ell,1}) \text{ primitive}} \Tr(\rho(\Theta_{\chi})) \frac{1}{ \sqrt{q} }}{\# \{ \chi \in G(R_{\ell,1}) :  \chi \text{ primitive  \}}}  \] 

  Because $|\Tr(\rho(\Theta_{\chi}))| \leq \dim \rho$, we have \[ \left|  \frac{\sum_{\chi \in G(R_{\ell,1}) \text{ primitive}} \Tr(\rho(\Theta_{\chi})) \frac{1}{ \sqrt{q} }}{\# \{ \chi \in G(R_{\ell,1})\}} \right|  \leq \frac{\dim \rho}{\sqrt q}.\] Because $|G(\chi'^{-1},\psi)| =\sqrt{q}$ and $|\chi'(\Delta)|=1$, it suffices to show that the averages
 
 \[ \left| \frac{\sum_{\chi \in G(R_{\ell,1}) \text{ primitive}} \Tr(\rho(\Theta_{\chi})) a_{\chi'}(\chi)}{\# \{ \chi \in G(R_{\ell,1}) :  \chi \text{ primitive  \}}} \right|  \leq \frac{ C(\rho)}{\sqrt{q}}\] for some constant $C(\rho)$ (and then take $D(\rho) = (\ell-1) C(\rho) + \dim \rho$.)
 
 First let us handle the case where $\rho$ is the trivial representation. In this case, the bound follows immediately from the fact that each $a_\chi$ occurs equally often, so all the non-trivial $\chi'$ cancel completely.
 
 Next let us handle the case where $\rho$ is non-trivial and $p> 2\ell+1$. In this case, as explained in \cite[Remark 6.3]{WVQKR} the characters $\chi$ are associated to Artin-Schreier sheaves arising from polynomials of degree $\ell$. From \cite[the formula in the proof of Lemma 6.1]{WVQKR} we can see that $a_\chi$ is simply the top degree term of the polynomial times $\ell$.
 
 In the proof of \cite[Theorem 8.2]{WVQKR}, Katz shows that the average of $\Tr(\rho(\theta_\chi))$ over all polynomials of degree $\ell$ with every term but the linear term fixed is $O(1/\sqrt{q})$ for all but a fraction of $\leq 1/q$ of possible fixed choices for the high-degree terms (in fact, the problematic leading terms occur only for $\ell \geq 5$, and then they occupy a fraction at most $q^{- (\ell-3)/2}$ of the possible leading terms). Since $a_\chi$ is constant on these sets of polynomials, the same cancellation holds for  $\Tr(\rho(\theta_\chi)) \chi'(a_\chi)$. Summing over all possible choices of leading terms, we get the desired bound.

Next let us handle the case where $p$ is small. Here we cannot use the argument of Katz as a black box and must do some geometry. However, all the geometry is only a minor variant of the geometry done by Katz.

Let $\operatorname{Prim}_\ell$ be the space of primitive characters defined by Katz. He defined a sheaf $L_{\operatorname{univ}}$ on $\operatorname{Prim}_\ell$ \cite[\S4]{WVQKR} whose Frobenius conjugacy class at a point corresponding to a character $\chi$ is $\theta_\chi$ \cite[Lemma 4.1]{WVQKR}. By composing its monodromy representation with $\rho$, we obtain a sheaf $\rho(L_{\operatorname{univ}})$ whose Frobenius trace at a point is $\Tr(\rho(\Theta_{\chi}))$.

Let us in addition define a sheaf whose Frobenius trace at a point is $\chi'(a_\chi)$. To do this, we check that $a_\chi$ is a polynomial function on $\operatorname{Prim}_{\ell}$. Let $\ell_0$ be the largest prime-to-$\ell$ divisor of $\ell$.

To check this, observe that in the isomorphism defined in \cite[\S2]{WVQKR} between  $(1 + T \mathbb F_q[T])/ (1 + T^{\ell +1} \mathbb F_q[t])$ and $\prod_{1\leq m,m \textrm{ prime to } p, m\leq n} W_{l(m,\ell)}(\mathbb F_q)$,  because $1+ x T^\ell$ is the Artin-Hasse exponential of $- x T^{\ell}$, it is sent to a product which is $0$ in every factor except $m= \ell_0$ and $(0,\dots,0,\ell_0 x)$ in the factor with $m=\ell_0$.  The character of this group associated to a tuple of Witt vectors is defined in \cite[\S3]{WVQKR} by elementwise multiplying Witt vectors, taking the trace to the Witt vectors of $\mathbb F_p$, and applying a character of $W_{l(m,\ell)} (\mathbb F_p) $. The product of the Witt vector $(0,\dots,0,\ell_0 x)$ with another Witt vector depends only on the first coordinate $a_0$ of that other Witt vector, and taking the trace and applying a character of $W_{v_p(\ell_0)+1} (\mathbb F_p)$ is the same as taking $\operatorname{exp}( \ell_0 x a_0)$. Hence we can take $a_\chi= \ell_0 a_0$.

Then $\chi'(a_\chi)$ is the trace function of the Artin-Schreier sheaf $\mathcal L_{\chi'}( a_\chi)$. Thus we must show cancellation in

\[\sum_{x \in \operatorname{Prim}_{\ell} (\mathbb F_q)} \operatorname{tr}( \operatorname{Frob}_q, \rho(L_{univ}),x ) \operatorname{tr}(\operatorname{Frob}_q,  \mathcal L_{\chi'}(a_\chi),x) .\]
By the Lefschetz fixed point formula, this is \[\sum_{i =0}^{2\ell} (-1)^i \operatorname{tr}( \operatorname{Frob}_q, H^i_c (\operatorname{Prim}_{n, \overline{\mathbb F}_q}, \rho(L_{univ}) \otimes \mathcal L_{\chi'}(a_\chi) ).\]

Because $\rho(L_{univ})$ and $\mathcal L_{\chi'}(a_\chi)$ are both pure of weight $0$, eigenvalues of Frobenius acting on $H^i$ have norm at most $q^{i/2}$. We will show that $H^{2l}$ vanishes, so each trace is at most $q^{ \ell -1/2}$ times the dimension of $H^i$. Next we will show that the dimensions of the $H^i$ are uniformly bounded. Thus the sum of traces will be $O( q^{\ell-1/2})$ and dividing by the denominator will be $O(q^{-1/2})$, as desired.

We handle vanishing of the top cohomology first. By \cite[Theorem 5.1]{WVQKR}, under these assumptions, the monodromy of $L_{\operatorname{univ}}$ is a subgroup of $GL_{\ell-1}$ which contains $SL_{\ell-1}$ and thus maps surjectively onto $PGL_{\ell-1}$. In particular, $\rho(L_{univ})$ is irreducible. Furthermore, $\mathcal L_{\chi'}(a_\chi)$ is lisse of rank one, so $\rho(L_{univ})\otimes \mathcal L_{\chi'}(a_\chi)$ is irreducible. If $\dim \rho \neq 1$ then $\rho(L_{univ})\otimes \mathcal L_{\chi'}(a_\chi)$ has an irreducible monodromy representation of dimension greater than one and so is non-trivial. If $\dim \rho=1$ then $\rho$ is the trivial representation and so these components are simply the monodromy representations of the Kummer sheaf, which are non-trivial because $a_\chi$ is an affine coordinate of $\operatorname{Prim}_\ell$ under its isomorphism with an open subset of affine space. Hence in all cases the monodromy representation is irreducible and non-trivial, so it has no monodromy invariants, and thus the top cohomology vanishes.

To bound the Betti numbers, we can simply observe that in each characteristic, the sheaves in question can be defined only in terms of $\ell,\rho,p$ and not the finite field $\mathbb F_q$, so their Betti numbers are independent of $\mathbb F_q$. Because there are only finitely many $p$ left to consider, the Betti numbers are bounded in terms only of $\ell,\rho$.

\end{proof} 

\section*{Acknowledgments}
We wish to thank Lior Bary-Soroker and Ze'ev Rudnick for comments on an earlier version of the manuscript. 

The research of OG was supported by the European Research Council under the European Union's Seventh Framework Programme (FP7/2007-2013) / ERC grant agreement n$^{\text{o}}$ 320755.

This research was partially conducted during the period WS served as a Clay Research Fellow, and partially conducted during the period he was supported by  Dr. Max R\"{o}ssler, the Walter Haefner Foundation and the ETH Zurich Foundation.

\bibliographystyle{abbrv}
\bibliography{references}

\end{document}